\let\oldproofname=\proofname
\renewcommand{\proofname}{\rm\bf{\oldproofname}}
\renewcommand{\emph}[1]{{\rm\bf{#1}}}
\setlist[enumerate]{label=\textup{(\roman*)}}
\newtheorem{theorem}{Theorem}[section]
\newtheorem{lemma}[theorem]{Lemma}
\newtheorem{proposition}[theorem]{Proposition}
\newtheorem{corollary}[theorem]{Corollary}
\newtheorem{assumption}[theorem]{Assumption}
\theoremstyle{definition}
\newtheorem{definition}[theorem]{Definition}
\newtheorem{remark}[theorem]{Remark}
\numberwithin{equation}{section}
\newcommand{\loc}{\mathrm{loc}}
\DeclareMathOperator{\e}{e}
\DeclareMathOperator{\Div}{div}
\DeclareMathOperator{\sgn}{sgn}
\DeclareMathOperator{\cc}{c}
\newcommand{\1}{\boldsymbol{1}}
\newcommand{\C}{\mathbb{C}}
\newcommand{\N}{\mathbb{N}}
\newcommand{\R}{\mathbb{R}}
\newcommand{\smooth}[1][]{\rC_{\cc}^{\infty}(#1)}
\newcommand{\W}{\mathrm{W}}
\DeclareRobustCommand{\Wdot}{\dot{\W}\protect{\vphantom{W}}}
\renewcommand{\L}{\mathrm{L}}
\renewcommand{\d}{\mathrm{d}}
\DeclareMathOperator{\Avg}{Avg}
\newcommand{\les}{\lesssim}
\renewcommand{\S}{\mathrm{S}}
\let\ii\i
\renewcommand{\i}{\mathrm{i}}
\DeclareMathOperator{\re}{Re}
\newcommand{\rC}{\mathrm{C}}
\newcommand{\cD}{\mathcal{D}}
\newcommand{\cF}{\mathcal{F}}
\newcommand{\cL}{\mathcal{L}}
\newcommand{\cM}{\mathcal{M}}
\newcommand{\cP}{\mathcal{P}}
\newcommand{\cR}{\mathcal{R}}
\newcommand{\cS}{\mathcal{S}}
\newcommand{\cT}{\mathcal{T}}
\newcommand{\dom}{\mathrm{dom}}
\newcommand{\sub}{\subseteq}
\newcommand{\Meyers}{m_+}
\def\Xint#1{\mathchoice
	{\XXint\displaystyle\textstyle{#1}}%
	{\XXint\textstyle\scriptstyle{#1}}%
	{\XXint\scriptstyle\scriptscriptstyle{#1}}%
	{\XXint\scriptscriptstyle%
		\scriptscriptstyle{#1}}%
	\!\int}
\def\XXint#1#2#3{{\setbox0=\hbox{$#1{#2#3}{%
				\int}$ }
		\vcenter{\hbox{$#2#3$ }}\kern-.6\wd0}}
\def\barint{\,\Xint-} 
\title[Meyers exponent rules the first-order approach]{Meyers exponent rules the first-order approach to second-order elliptic boundary value problems}
\author{Pascal Auscher}
\email{pascal.auscher@universite-paris-saclay.fr}
\address{Universit\'e Paris-Saclay, CNRS, Laboratoire de Math\'ematiques d'Orsay, 91405 Orsay}
\author{Tim B\"ohnlein}
\author{Moritz Egert}
\email{boehnlein@mathematik.tu-darmstadt.de, egert@mathematik.tu-darmstadt.de}
\address{Fachbereich Mathematik, Technische Universit\"at Darmstadt, Schlossgartenstr.~7, 64289 Darmstadt, Germany}
\subjclass[2020]{Primary: 35J25, 47F10. Secondary: 35B65, 46E35.}
\date{March 31, 2025}
\dedicatory{}
\keywords{Elliptic boundary value problems, a priori estimates, Meyers exponent, reverse H\"older estimates, Hodge projector, Cauchy--Riemann systems, operator-valued Fourier multipliers}
\begin{document}
	\begin{abstract}
        The first-order approach to boundary value problems for second-order elliptic equations in divergence form with transversally independent complex coefficients in the upper half-space rewrites the equation algebraically as a first-order system, much like how harmonic functions in the plane relate to the Cauchy–Riemann system in complex analysis. It hinges 
        on global $\L^p$-bounds for some $p>2$ for the resolvent of a perturbed Dirac-type operator acting on the boundary.  
        At the same time, gradients of local weak solutions to such equations exhibit  higher integrability for some $p>2$, expressed in terms of weak reverse H\"older estimates.  We show that the optimal exponents for both properties coincide.
        Our proof relies on a simple but seemingly overlooked connection with operator-valued Fourier multipliers in the tangential direction.
    \end{abstract}
	
	\maketitle

	\section{Introduction}
	We work in Euclidean space $\R^{1+n} \coloneqq \{(t,x): t \in \R, x \in \R^n\}$ and consider pure         second-order operators in divergence form 
        \begin{align}
        \label{eq: the guy}
            \cL U \coloneqq - \Div_{t, x} (A \nabla_{t,x} U)
        \end{align}
        with $t$-independent coefficients $A=A(x)$ that are bounded, complex-valued, measurable and strongly elliptic (see Section~\ref{Sec: L2 theory} for precise definitions). Historically, these operators arise as pullbacks of the Laplacian from the domain above a Lipschitz graph. Since the pioneering work of Dahlberg~\cite{Dahlberg}, harmonic analysts have shown a particular interest in a priori estimates, uniqueness, and solvability of boundary value problems for the equation $\cL U = 0$ in the upper half-space $\R^{1+n}_+$ with prescribed data of Dirichlet- and Neumann-type in $\L^p$-spaces. 
        
        When the coefficients of $\cL$ are real, such questions can efficiently be studied through the associated elliptic measure and layer potentials~\cite{HKMP, HKMP2}. These tools are not available for complex coefficients since they rely on the maximum principle and pointwise estimates of weak solutions in style of the celebrated DeGiorgi--Nash--Moser theory. 

        An alternative measure of regularity for weak solutions, independent of whether the coefficients are real or complex, is the self-improving property of weak reverse H\"older estimates for {the gradient of} local $\cL$-harmonic functions, first established by Meyers \cite{Meyers_Reverse-Holder}. This property states that there exists $p>2$ and a constant $C \geq0$ (both depending only on ellipticity and dimension) such that for every axes-parallel cube $Q \subseteq \R^{1+n}$ and every weak solution to $\cL U = 0$ in $2Q$ we have the weak reverse H\"older estimate
        \begin{align*}
            \bigg(\barint_Q |\nabla_{t,x} U|^p \, \d (t, x) \bigg)^{\frac{1}{p}} \leq C \bigg(\barint_{2Q} |\nabla_{t,x} U|^2 \, \d (t, x) \bigg)^{\frac{1}{2}}.
        \end{align*} 
        We denote the supremum over all such $p > 2$ by $m_+(\cL)$ and refer to it as the \emph{Meyers exponent} of $\cL$. 
        The original proof relies on real-variable methods, whose underlying ideas originate from Gehring’s self-improvement property for conformal mappings~\cite{Gehring_Quasiconformal_mapping}. Notably, it implies that weak solutions are H\"older continuous when $n = 1$. However, no direct link between the Meyers exponent and a priori estimates or solvability of boundary value problems is known so far.

        {In order to treat boundary value problems for operators with complex coefficients, the first author, together with Ros\'en (Axelsson) and McIntosh, introduced a ‘first-order approach’ in \cite{A-A-McI_L2_BVP}}, which rewrites the equation $\cL U = 0$ algebraically as a first-order system $\partial_t F + DB F = 0$, much like how harmonic functions in the plane relate to the Cauchy–Riemann system in complex analysis. Here, $D$ is a Dirac operator and the perturbation $B$ arises from the coefficients $A$ through an explicit, yet intricate transformation on elliptic matrices. This approach applies to all equations as above and leads to remarkable results that all, in one way or the other, impose an $\L^p$-type topology on the data and the solution $F$ when $p$ belongs to a specific range of exponents $[2, p_+(DB))$. Let us mention the explicit construction of solutions through a $DB$-semigroup~\cite{Amenta-Auscher_Thesis, Auscher-Stahlhut_Diss}, representation and uniqueness of solutions to boundary value problems~\cite{Auscher-Egert, A-M_Rep-and-Uniqu-via-FO}, bounds for layer potentials beyond singular integral operators~\cite{Auscher-Egert, Rosen}, well-posedness of Neumann problems for equations in block form~\cite{Auscher-Egert_book, A-M_Rep-and-Uniqu-via-FO}, and identification of Hardy spaces~\cite{Amenta-Auscher_Thesis, Auscher-Egert_book, Auscher-Stahlhut_Diss}. The limiting exponent $p_+(DB)$ was introduced in \cite{Auscher-Stahlhut_Diss} as the upper endpoint of the maximal interval of exponents $p$ around $2$ for which $DB$ is bisectorial in $\L^p(\R^n)$, that is, $DB$ satisfies a specific type of resolvent estimates along the imaginary axis.

        A natural and frequently asked question is how this exponent $p_+(DB)$ relates to the underlying second-order equation. {Our main result offers a concise answer.

        \begin{theorem}
        \label{Meyers meets DB: Main Thm: Easy formulation}
        The exponent $p_+(DB)$ from the first-order approach and the Meyers exponent $\Meyers(\cL)$ coincide.
                    \end{theorem}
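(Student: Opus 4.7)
I would identify both $p_+(DB)$ and $\Meyers(\cL)$ with the range of $p \geq 2$ for which a single global $\L^p$ a priori estimate holds on $\R^{1+n}$, using the tangential Fourier transform in $t$ as the bridge between the first- and second-order pictures.

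On the first-order side, I regard $\partial_t + DB$ as an operator on $\L^p(\R^{1+n}) \cong \L^p(\R; \L^p(\R^n))$. Its symbol under Fourier transform in $t$ is the operator family $i\tau + DB$ on $\L^p(\R^n)$. Since $\L^p(\R^n)$ is a UMD space, the operator-valued Mikhlin multiplier theorem shows that $\partial_t + DB$ admits a bounded inverse on the closed subspace $\L^p(\R; \overline{\ran\,D})$ exactly when $DB$ is $R$-bisectorial on $\L^p(\R^n)$, which inside the bisectorial range is equivalent to bare bisectoriality (using the bounded $\mathcal{H}^\infty$-calculus of $DB$ on UMD spaces) and hence to $p < p_+(DB)$. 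On the second-order side, the algebraic dictionary of the first-order approach identifies each weak solution $U$ of $\cL U = -\Div_{t,x} f$ with the conormal field $F = [\partial_{\nu_A} U, \nabla_x U]^\top$ satisfying $(\partial_t + DB) F = G$, where $G$ depends on $f$ through an explicit $\L^p$-bicontinuous transformation on matrices. This converts the multiplier bound into the a priori estimate $\|\nabla_{t,x} U\|_p \les \|f\|_p$ for $\cL$ on $\R^{1+n}$, and back.

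To close, I would invoke the classical Shen-type equivalence: by Gehring self-improvement fed with the Caccioppoli inequality and the $\L^2$ estimate from Lax--Milgram, the global $\L^p$ a priori estimate for $\cL$ on $\R^{1+n}$ holds precisely when $p < \Meyers(\cL)$. Chaining the three equivalences yields $p < p_+(DB) \iff p < \Meyers(\cL)$ on $[2,\infty)$, and therefore the two exponents coincide.

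\textbf{Main obstacle.} The delicate point is the Fourier multiplier step. The Mikhlin theorem needs $R$-boundedness of the symbol family, whereas $p_+(DB)$ is defined via bare resolvent bounds along the imaginary axis, and moreover $i\tau + DB$ genuinely fails to be invertible on $\ker D$. The first difficulty dissolves through the bounded $\mathcal{H}^\infty$-calculus of $DB$ on the UMD space $\L^p(\R^n)$ inside its bisectorial range, which automatically promotes resolvent bounds to $R$-bounds; the second is handled by restricting everywhere to the closed subspace $\overline{\ran\,D}$ that naturally carries the first-order system and matches the range of the algebraic dictionary. A secondary bookkeeping issue is to verify that the algebraic transformation between $f$ and $G$ is $\L^p$-bicontinuous throughout the range, which reduces to pointwise bounds on finite-dimensional matrix operations.
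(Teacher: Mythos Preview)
Your plan has the right architecture --- Fourier-transform in $t$, apply Weis' operator-valued Mikhlin theorem, and close via Shen's extrapolation --- but the implementation breaks at the multiplier step. The operator $(\partial_t+DB)^{-1}$ is \emph{never} bounded on $\L^p(\R;\overline{\ran\,D})$: its symbol $(\i\tau+DB)^{-1}$ has operator norm of order $|\tau|^{-1}$ as $\tau\to 0$ (this is precisely what bisectoriality gives), so already the zeroth Mikhlin hypothesis fails. The error originates in the dictionary. If $\cL U=-\Div_{t,x}f$, then the conormal gradient $F=\overline{A}\,\nabla_{t,x}U$ satisfies $(\partial_t+DB)F=G$ with $G_\perp=\Div_{t,x}f$ and $G_\parallel=0$; thus $G$ carries a \emph{derivative} of $f$, not a pointwise matrix action on it, and your ``secondary bookkeeping issue'' is in fact the crux. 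Rewriting carefully yields $(\partial_t+DB)\tilde F=Dg$ for some invertible $\L^\infty$-matrix applied to $f$, so the object that must be controlled is $(\partial_t+DB)^{-1}D$, whose symbol \emph{is} uniformly bounded on the bisectorial range via the intertwining with $BD$. With that correction your $H^\infty\Rightarrow\cR$-bounded route can handle the direction $p_+(DB)\leq\Meyers(\cL)$; the ``and back'' still requires the (true but unmentioned) fact that $\L^p$-boundedness of an operator-valued Fourier multiplier forces a.e.\ boundedness of its symbol --- Mikhlin alone is only a sufficient condition.

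The paper sidesteps these first-order scaling issues by keeping second-order objects on both sides of the Fourier transform. The algebraic link to $DB$ is made at each fixed $\tau$ through the identity $(1-\i\tau^{-1}DB)^{-1}=\overline{A}\,S_\tau L_\tau^{-1}S_\tau^*\,M+\text{(explicit)}$, which reduces bisectoriality of $DB$ to uniform $\L^p$-bounds for the Hodge family $(S_\tau L_\tau^{-1}S_\tau^*)_{\tau}$. Weis' theorem is then applied to the second-order symbol $sS_\tau(1+s^2L_\tau)^{-N}$ of $s\nabla_{t,x}(1+s^2\cL)^{-N}$, with the $\cR$-bounds obtained from off-diagonal estimates and Fefferman--Stein rather than from the $H^\infty$-calculus. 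The converse direction is not obtained by inverting the multiplier argument but by the elementary observation that $U(t,x)=\e^{\i\tau t}u(x)$ is $\cL$-harmonic whenever $u$ is $L_\tau$-harmonic, which pushes weak reverse H\"older estimates from $\cL$ down to the family $(L_\tau)_\tau$.
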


        We highlight again that the two exponents have fundamentally different natures.} The Meyers exponent $m_+(\cL)$ primarily captures local properties of weak solutions in the interior, uniformly across all scales, whereas $p_+(DB)$ relates to a {global estimate at the boundary.}

        Theorem~\ref{Meyers meets DB: Main Thm: Easy formulation} confirms that results obtained via self-improvement properties for second-order equations and extrapolation in the first-order approach are compatible in terms of admissible exponents. In particular, this shows that $p_+(DB)$ is not merely an artifact of the first-order approach, which constructs specific global solutions; rather, it genuinely reflects interior behavior of \emph{all} local weak solutions in the second-order setting. This may be surprising.

        \subsection{Principal ideas for the proof of Theorem~\ref{Meyers meets DB: Main Thm: Easy formulation}}
        
        We successively replace the weak reverse H\"older estimates with other $\L^p$-estimates that hold for the same range of exponents $p$, with the possible exception of the endpoints. 
        
        The \emph{first idea} is to `globalize' by replacing the set of reverse H\"older estimates (one for each cube) with a global estimate for one single operator. 
        
        Multiple choices for the global operator are possible. A particularly easy proof can be given for the Hodge projector $\nabla_{t,x} \cL^{-1} \Div_{t,x}$ by using an extrapolation technique due to Shen~\cite{Shen_Lp-extrapolation}. The general theory of $\L^p$-families associated with divergence-form operators from the textbook~\cite{Auscher-Egert_book} allows us to replace the Hodge projector further with $s\nabla_{t,x}(1+s^2 \cL)^{-N}$ for some large $N$, as long as the $\L^p$-bound remains uniform with respect to $s >0$. These higher-order resolvents offer decay properties that are needed for technical reasons only.

        The \emph{second idea} is to use a seemingly overlooked connection with operator-valued Fourier multipliers through the partial Fourier transform in the $t$-variable already at the level of second-order theory, hence not involving the first-order approach. 
        
        Indeed, partial Fourier transform turns the homogeneous gradient $\nabla_{t,x}$ in $\R^{1+n}$ into a family $S_\tau \coloneqq [\i \tau, \nabla_x]^\top$ of $\tau$-dependent inhomogeneous gradients in $\R^n$. Since the coefficients of $\cL$ are independent of $t$, this operator corresponds to the family $L_\tau = S_\tau^* A S_\tau$ of inhomogeneous second-order operators. Hence, the global operator from above can be written as an operator-valued Fourier multiplier
        \begin{align*}
            s \nabla_{t,x}(1+s^2 \cL)^{-N} = \cF_t^{-1} \big(s S_\tau (1+s^2 L_\tau)^{-N}\big) \cF_t.
        \end{align*}
        The operators $L_\tau$ previously appeared in a second-order reformulation of $\L^p$-bisectori\-ality of $DB$ in \cite{Auscher-Stahlhut_Diss}, and it turns out that the pieces fit together beautifully: The condition on $L_\tau$ that arises from $\L^p$-bisectoriality of $DB$ is exactly the one that is needed to verify that the Fourier symbol of $s \nabla_{t,x}(1+s^2 \cL)^{-N}$ is an $\cR$-Mihlin symbol with values in the bounded operators on $\L^p(\R^n)$. Thus, this operator is bounded on $\L^p(\R; \L^p(\R^n)) \cong \L^p(\R^{1+n})$ by virtue of Weis' operator-valued multiplier theorem~\cite{Weis_Lp-multipliers}.
        
        The above line of reasoning shows that $p_+(DB)$ is at least as large as $\Meyers(\cL)$. For the converse, we reverse the 'globalizing' step in one dimension lower and replace the estimates from \(\L^p\)-bisectoriality of \(DB\) with weak reverse H\"older estimates for the gradient of solutions to \(L_\tau u = 0\). Care is required, however, as we need to develop the theory uniformly in the Fourier variable \(\tau\). It then remains to show that the Meyers exponents for the operators \(L_\tau\) cannot exceed that of \(\cL\). This follows immediately from the observation that if \(u\) is a weak solution to \(L_\tau u = 0\), then \(U(t, x) = \e^{\i \tau t} u(x) \) defines a solution to \(\cL U = 0\) in one dimension higher.

        \subsection{Structure of the paper and additional findings}
        
        Figure~\ref{fig: Roadmap} comprises the structure of the proof of our main result. The full equivalence reveals some further characterizations of $p_+(DB)$ that have consequences for the operators $DB$ and $\cL$ beyond the scope of Theorem~\ref{Meyers meets DB: Main Thm: Easy formulation}. They mostly involve further exponents linked to the operators $L_\tau$ which are interesting on their own. We summarize these additional findings in Theorem~\ref{Thm: characterization of I(DB)} at the end of our paper.
        
        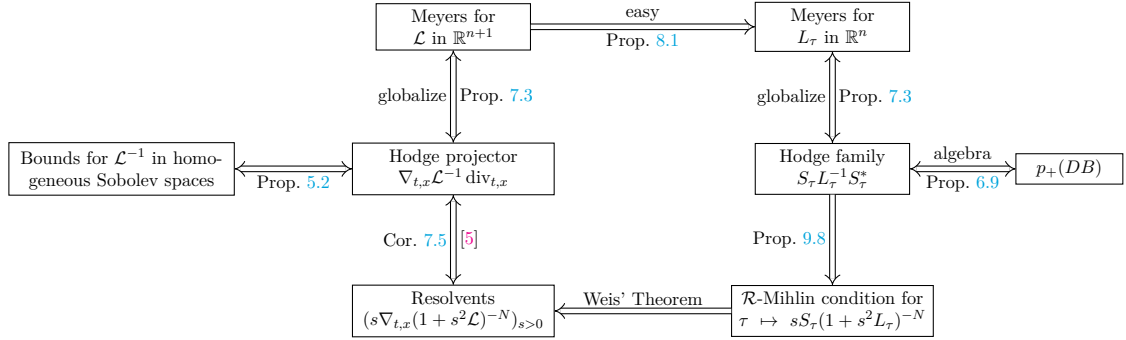
\begin{figure}[h]
            \centering
            \begin{tikzpicture}
            \begin{scope}[scale=0.63, transform shape] 
                %
                %
                \node (Meyers n+1) at (0,12) [text width=3cm, align=center,draw]{Meyers for $\cL$ in $\R^{n+1}$};
                \node (Meyers n) at (8,12) [text width=3cm, align=center,draw]{Meyers for $L_\tau$ in $\R^{n}$};
                \node (Hodge n+1) at (0,9) [text width=4cm, align=center,draw]{Hodge projector $\nabla_{t,x} \cL^{-1} \Div_{t,x}$};
                \node (Hodge n) at (8,9) [text width=3cm, align=center,draw]{Hodge family $S_\tau L_\tau^{-1} S_\tau^*$};
                                \node (Resolvents n+1 regular) at (0,6) [text width=4cm, align=center, draw]{Resolvents $(s \nabla_{t,x} (1+s^2 \cL)^{-N})_{s >0}$};
                \node (Multiplier) at (8,6) [text width=4cm, align=center, draw]{$\cR$-Mihlin condition for  $\tau~\mapsto~s S_\tau (1+s^2 L_\tau)^{-N}$};
                \node (DB) at (13, 9) [text width=2cm, align=center, draw]{$p_+(DB)$};
                \node (Isom) at (-7,9)  [text width=4.5cm, align=center, draw]{Bounds for $\cL^{-1}$ in homogeneous Sobolev spaces};
                %
                %
                %
                %
                \draw[-implies,double equal sign distance] (Meyers n+1) -- (Meyers n) node[midway, above]{easy} node[midway, below]{Prop.~\ref{M(cL) < M(L_t): Prop: M(cL) < M(L_t)}};
                \draw[implies-implies,double equal sign distance] (Meyers n+1) -- (Hodge n+1) node[midway,left]{globalize} node[midway, right]{Prop.~\ref{M(L_t) = P_+(L_t): Prop: M(L_t) = P_+(L_t)}};
                \draw[implies-implies,double equal sign distance] (Hodge n+1) -- (Resolvents n+1 regular) node[midway, right]{\cite{Auscher-Egert_book}} node[midway, left]{Cor.~\ref{M(L_t) = P_+(L_t): Cor: M(cL) = Stuff from AE}};                
                \draw[-implies,double equal sign distance] (Multiplier) -- (Resolvents n+1 regular)  node[midway, above] {Weis' Theorem} ;
                \draw[implies-implies,double equal sign distance] (Meyers n) -- (Hodge n) node[midway,left]{globalize} node[midway, right]{Prop.~\ref{M(L_t) = P_+(L_t): Prop: M(L_t) = P_+(L_t)}};
                \draw[-implies,double equal sign distance] (Hodge n) -- (Multiplier) node[midway, left] {Prop.~\ref{M(L_t) < M(cL): Prop: SFE for m_s(tau)}};
                \draw[implies-implies,double equal sign distance] (Hodge n) -- (DB) node[midway, above] {algebra} node[midway, below] {Prop.~\ref{DB: Prop: I(DB) = Hodge((L_tau)_tau)}};
                \draw[implies-implies,double equal sign distance] (Isom) -- (Hodge n+1) node[midway, below] {Prop.~\ref{Inhom Hodge theory: Prop: Hodge-range via Hodge}};
            \end{scope}
            \end{tikzpicture}
            \caption{Roadmap to Theorem~\ref{Meyers meets DB: Main Thm: Easy formulation}. The exponent $p_+(DB)$ coincides with the Meyers exponent for $\cL$.}
            \label{fig: Roadmap}
        \end{figure}
    
        Sections~\ref{Sec: Notation}–\ref{Sec: DB} contain preliminary material. The proof of Theorem~\ref{Meyers meets DB: Main Thm: Easy formulation} is presented in Sections~\ref{Sec: Meyers(L_tau) = P_+(L_tau)}–\ref{Sec: Proof main n > 2}. For reasons of homogeneity related to Sobolev embeddings, our argument applies in boundary dimensions $n\geq2$. In the case $n=1$, it is well-known from~\cite{Auscher-Stahlhut_Diss} that $p_+(DB) = \infty$ and from \cite[App.~B]{Auscher-Tchamitchian_book} that $m_+(\cL) = \infty$. We provide a direct argument for $\Meyers(\cL) = \infty$ in Section~\ref{Sec: M(L) = infinity if n = 2}.

        \subsection{Acknowledgment}

        The second and third authors acknowledge the support of the CNRS and the Laboratoire de Mathématiques d'Orsay, where this project was {partly} carried out during two research stays in September 2023 and October 2024. The authors are also grateful to Sebastian Bechtel for highlighting the square function techniques in~\cite{Bechtel-Ouhabaz_ODEs, Kunstmann-Weis_book}. A CC-BY 4.0 
\url{https://creativecommons.org/licenses/by/4.0/} public copyright license has been applied by the authors to the present document and will be applied to all subsequent
versions up to the Author Accepted Manuscript arising from this
submission.

	
	\section{Notation} \label{Sec: Notation}
	
	Most of our notation is standard and we make use of the following additional conventions:
	
	\begin{itemize}
	
		\item We abbreviate $\R^* \coloneqq \R \setminus \{ 0 \}$.
		
		\item We write $Q(x, r)$ for the open, axes-parallel cube with center $x$ and length $2r$. Likewise, $B(x,r)$ denotes the ball centered at $x$ with radius $r$.
		
		\item For suitable exponents $p$, we define conjugate indices $p' \coloneqq \nicefrac{p}{(p-1)}$ (H\"older), $p^* \coloneqq \nicefrac{p n}{(n - p)}$ (upper Sobolev) and $p_* \coloneqq \nicefrac{pn}{(n+p)}$ (lower Sobolev), where the ambient dimension $n$ will be clear from the context.
		
		\item We write $X^*$ for the Banach space of bounded and anti-linear functionals from $X$ to $\C$. 
		
		\item Our one-dimensional Fourier transform is
		\begin{equation*}
			(\cF_t f)(\tau) \coloneqq \frac{1}{\sqrt{2 \pi}} \int_{\R} f(t) \e^{- \i t \cdot \tau} \, \d t. 
		\end{equation*}
		
		\item For dilations of functions we write $(\delta_t f) (x) \coloneqq f(tx)$.
		
		\item We use the notation $(f)_E\coloneqq \fint_E f \, \d x \coloneqq \frac{1}{|E|} \int_E f \, \d x$ for averages. 

            \item We use $\lesssim$ and $\gtrsim$ to denote inequalities that hold up to multiplicative constants independent of the relevant quantities.
	\end{itemize}
	
	
	\section{Function spaces and elliptic operators} \label{Sec: L2 theory}
	
	For purely second-order operators, we typically use homogeneous Sobolev spaces.
	
	\begin{definition}
		For $p \in (1, \infty)$, we define $\Wdot^{1,p}(\R^n)$ as the space of all $\L^p_{\loc}$-functions $u$ for which $\nabla_x u$ belongs to $\L^p(\R^n)^n$ modulo $\C$. We endow this space with the norm 
		\begin{equation*}
			\| u \|_{\Wdot^{1,p}} \coloneqq \| \nabla_x u \|_p.
		\end{equation*}
		We also write $\Wdot^{-1,p}(\R^n) \coloneqq \Wdot^{1,p'}(\R^n)^*$.
	\end{definition}

        The same notation will be used in dimension $n+1$, where we write $\nabla_{t,x}$ for the gradient. Throughout this article, we work under the following assumption on the coefficients.

        \begin{assumption}[Ellipticity]
            The coefficients $A: \R^n \to \L^\infty(\R^n; \C^{(1+n) \times (1+n)})$ are strongly elliptic in the sense that there exists $\lambda>0$ such that 
            \begin{align*}
                \re \langle A(x) \xi, \xi \rangle \geq \lambda |\xi|^2 \qquad (x \in \R^n, \xi \in \C^{1+n}).
            \end{align*}
        \end{assumption}

        On the (partial) Fourier side, we work with inhomogeneous operators where $\nabla_{t,x}$ is replaced by inhomogeneous gradients that scale with the Fourier variable. We introduce the natural Sobolev spaces that capture the scaling.
	
	\begin{definition}
		Let $\tau \in \R$ and $p \in (1,  \infty)$.
        
            \begin{enumerate}
                \item We write $S_\tau \coloneqq [\i \tau, \nabla_x]^\top$ and $S_\tau^* \coloneqq [-\i \tau, -\Div_x]$.
                \item The space $\W^{1, p}_\tau(\R^n)$ is the usual inhomogeneous Sobolev space but with the $\tau$-adapted norm
		\begin{equation*}
			\| u \|_{\W^{1,p}_{\tau}} \coloneqq \|S_\tau u\|_p
		\end{equation*}
		and $\W^{-1,p}_{\tau}(\R^n) \coloneqq \W^{1,p'}_{\tau}(\R^n)^*$ is its dual.
            \end{enumerate}
	\end{definition}
	
	\begin{remark}
		When $\tau = 0$, the space $\W^{1, p}_\tau(\R^n)$ is understood as $\Wdot^{1, p}(\R^n)$ and $S_\tau$ as the usual gradient $\nabla_x$.
	\end{remark}

	Let $\tau \in \R$. In view of the Lax--Milgram lemma, we can define the isomorphism
	\begin{align*}
		L_\tau &\coloneqq S_\tau^* A S_\tau \colon \W^{1, 2}_\tau(\R^n) \to \W^{-1,2}_\tau(\R^n),\\
		\langle L_\tau u, v \rangle &\coloneqq \int_{\R^n} A S_\tau u \cdot \overline{S_\tau v} \, \d x.
	\end{align*}
        The ellipticity assumption ensures that the maximal restriction of $L_\tau$ to $\L^2(\R^n)$ is m-accretive and its negative generates a holomorphic $\rC_0$-semigroup of contractions in $\L^2(\R^n)$. Background material can be found e.g.\ in \cite{Kato_book}. For simplicity, we do not distinguish notationally between $L_\tau$ and this negative generator. 
        
        Writing the matrices $A(x)$ according to the $(t,x)$-notation as 
        \begin{align}
        \label{eq: Splitting of A}
		A \coloneqq \begin{bmatrix}
			a & b \\ c & d
		\end{bmatrix}, 
	\end{align}
        we see that $L_\tau$ is the weak interpretation of the divergence-form operator with lower-order terms
        \begin{align}
        \label{eq: Formula L-tau}
           L_\tau u = -\Div_x (d \nabla_x u) - \i \tau \Div_x (cu) - \i \tau b \nabla_x u + \tau^2 a u.
        \end{align}
        
        The main elliptic operator $\cL$ in \eqref{eq: the guy} has an identical weak interpretation with $\tau  =0$ in one dimension higher. In particular, it is also m-accretive.
	
	
	\section{Tools from $\L^p - \L^q$ off-diagonal theory} \label{Sec: Lp-Lq estimates}
	
	We review some abstract theory on off-diagonal estimates for uniformly bounded families $\cT = (T(t))_{t \in I} \subseteq \cL(\L^2)$ defined on some set $I \subseteq \R^*$. For us, $\cT$ will be a semigroup or resolvent family associated with a differential operator or a singleton. Most of the material is taken from~\cite{Auscher-Egert_book}. 
	
	\begin{definition} \label{Lp-Lq estimates: Def: Lp-Lq estimates}
		Let $1 \leq p \leq q \leq \infty$. We say that $\cT$
		\begin{itemize}
			\item is \emph{$\boldsymbol{\L^p - \L^q}$-bounded} if there is $C > 0$ such that 
			\begin{equation*}
				\| T(t) f \|_q \leq C |t|^{\frac{n}{q} - \frac{n}{p}} \| f \|_p 
			\end{equation*}
			for all $t \in I$ and $f \in \L^p \cap \L^2$. 
			
			\item satisfies \emph{$\boldsymbol{\L^p - \L^q}$ off-diagonal estimates} if there are $C, c > 0$ such that 
			\begin{equation*}
				\| \1_F T(t) \1_E f \|_q \leq C |t|^{\frac{n}{q} - \frac{n}{p}} \e^{-c \frac{\d(E, F)}{|t|}}\| \1_E f \|_p 
			\end{equation*}
			for all $t \in I$, measurable sets $E, F \subseteq \R^n$ and $f \in \L^p \cap \L^2$. 
		\end{itemize}
		For $p = q$, we speak of \emph{$\boldsymbol{\L^p}$-boundedness} and \emph{$\boldsymbol{\L^p}$ off-diagonal estimates}, respectively.  
	\end{definition}
	
	These notions interpolate as expected and we cite the general principle in the form that we need later on. 
	
	\begin{lemma} \label{Lp-Lq estimates: Lem: Lp -> Lq + L2 ODE implies Lr -> Ls ODE}
		Let $1 \leq p < r \leq \sigma < q \leq \infty$. Suppose that $\cT$ satisfies $\L^2$ off-diagonal estimates and that it is $\L^p - \L^q$-bounded. Then $\cT$
        \begin{enumerate}
            \item satisfies $\L^r - \L^\sigma$ off-diagonal estimates and
            \item is $\L^r$-bounded.
        \end{enumerate}
	\end{lemma}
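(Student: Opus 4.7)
The plan is to combine Riesz--Thorin complex interpolation with a dyadic summation over cubes of side $|t|$, exploiting the exponential decay in the $\L^2$ off-diagonal estimate. I would handle (i) first and then deduce (ii) from it.

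For (i), the first step is to rewrite the $\L^p - \L^q$-boundedness as trivial $\L^p - \L^q$ off-diagonal estimates (with decay factor $1$). Riesz--Thorin applied to the operator $\1_F T(t) \1_E$ for fixed $E$, $F$, $t$, using these trivially decaying estimates at one endpoint and the given $\L^2$ off-diagonal estimates at the other, yields $\L^{p_\theta} - \L^{q_\theta}$ off-diagonal estimates with exponential decay $\e^{-c\theta \d(E,F)/|t|}$ along the segment $(1/p_\theta, 1/q_\theta) = (1-\theta)(1/p, 1/q) + \theta(1/2, 1/2)$, $\theta \in (0, 1]$. Given a target pair $(r, \sigma)$ with $p < r \leq \sigma < q$, I would choose $\theta > 0$ such that $p_\theta \leq r$ and $q_\theta \geq \sigma$; a brief case analysis (based on whether $r$ or $\sigma$ lies above or below $2$) shows that such a $\theta$ always exists under the given strict inequalities. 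Hölder's inequality applied on cubes of side $\sim |t|$ then converts the $\L^{p_\theta} - \L^{q_\theta}$ estimate into an $\L^r - \L^\sigma$ estimate, with the $|t|$-powers from Hölder and the interpolated scaling $|t|^{n/q_\theta - n/p_\theta}$ combining to produce exactly the expected $|t|^{n/\sigma - n/r}$. For arbitrary $E$, $F$, I would decompose them along a dyadic grid of cubes of side $|t|$ and sum the localized estimate over pairs of cubes; the exponential decay in the distance between cubes makes this double sum convergent.

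For (ii), I would specialize (i) to the diagonal case $\sigma = r$ to obtain $\L^r$ off-diagonal estimates, and from these deduce $\L^r$-boundedness by a standard Schur-test argument: partition $\R^n$ into a disjoint grid $\{Q_k\}$ of cubes of side $|t|$, write $\|T(t) f\|_r^r = \sum_k \|\1_{Q_k} T(t) f\|_r^r$ and $f = \sum_j \1_{Q_j} f$, and reduce to $\ell^r$-boundedness of the discrete convolution operator on $\Z^n$ whose kernel is $\e^{-c \d(Q_j, Q_k)/|t|}$. Because this kernel decays geometrically in the cube-distance, its row- and column-sums are uniformly bounded, so Schur's test yields the required $\ell^r$-bound.

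The main technical obstacle is that direct Riesz--Thorin covers only a one-parameter curve of exponent pairs, whereas the statement claims off-diagonal estimates throughout the two-parameter region $p < r \leq \sigma < q$. Bridging this gap is precisely the role of the Hölder-on-cubes localization, and it succeeds because the $|t|$-scaling of the off-diagonal estimates is exactly matched by that of Hölder's inequality on cubes of side $|t|$, so no cumulative loss occurs when decomposing arbitrary $E$, $F$ into such cubes.
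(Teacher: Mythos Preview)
Your approach is correct and uses the same essential ingredients as the paper's proof: complex interpolation between the $\L^2$ off-diagonal estimates and the $\L^p-\L^q$ bound, followed by H\"older on cubes of scale $|t|$ and a dyadic summation controlled by the exponential decay. The paper organises these steps slightly differently --- it first upgrades the $\L^p-\L^q$ bound to $\L^p-\L^q$ off-diagonal estimates (after shrinking the interval $[p,q]$ slightly), then deduces $\L^p$ and $\L^q$ off-diagonal estimates from that, and finally interpolates --- but all of this is delegated to \cite[Lem.~4.14, Rem.~4.8]{Auscher-Egert_book}, which contain exactly the H\"older-on-cubes and summation mechanisms you describe. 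Your route is arguably more self-contained, at the cost of having to spell out the localisation and reassembly by hand.

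One simplification you are missing in part (ii): once (i) gives $\L^r$ off-diagonal estimates, you do not need a Schur test. Taking $E = F = \R^n$ in Definition~\ref{Lp-Lq estimates: Def: Lp-Lq estimates} with $\sigma = r$ yields $\|T(t)f\|_r \leq C \|f\|_r$ directly, since $\d(E,F) = 0$ and the scaling factor is $|t|^0 = 1$. This is how the paper concludes (ii), in one line.
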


        \begin{proof}
        Upon modifying $p$ and $q$ slightly, but preserving the relation with $r$ and $\sigma$, we can assume that $\cT$ satisfies $\L^p - \L^q$ off-diagonal estimates, see the interpolation principle in \cite[Lem.~4.14]{Auscher-Egert_book}. This implies $\L^p$ and $\L^q$ off-diagonal estimates~\cite[Rem.~4.8]{Auscher-Egert_book} and (i) follows again by interpolation. Finally, (i) implies (ii) by taking $\sigma = r$ and $E=F=\R^n$ in the definition of off-diagonal estimates.
        \end{proof}

        There is also a useful bootstrapping mechanism~\cite[Lem.~4.4]{Auscher-Egert_book}. (The reference uses $q=2$ as reference point but this does not have an impact on the argument.)

        \begin{lemma} \label{Lp-Lq estimates: Triangle Argument}
		Let $1 \leq p < r \leq \sigma < q \leq \infty$ and suppose that $\cT$ is $\L^q$-, $\L^p$- and $\L^\sigma-\L^q$-bounded. Then there exists some $k \in \N$ such that $\cT^k = (T(t)^k)_{t \in I}$ is $\L^r - \L^q$-bounded.
	\end{lemma}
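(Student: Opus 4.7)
The plan is to iterate $T(t)$ finitely many times and chain together a sequence of interpolated bounds with gradually improving integrability, until the composition maps $\L^r$ to $\L^q$ with the required decay $|t|^{n/q - n/r}$. The central tool is Riesz--Thorin interpolation applied to the three given estimates, which I would combine with a geometric-growth argument on the exponents.

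First, I would derive two families of one-step bounds for $T(t)$. Interpolating $\L^p$-boundedness (scaling $|t|^0$) against $\L^\sigma - \L^q$-boundedness (scaling $|t|^{n/q - n/\sigma}$) yields, for every $\theta \in [0, 1]$,
\begin{equation*}
\| T(t) f \|_{q_\theta} \lesssim |t|^{n/q_\theta - n/p_\theta} \| f \|_{p_\theta},
\end{equation*}
with $1/p_\theta = (1-\theta)/p + \theta/\sigma$ and $1/q_\theta = (1-\theta)/p + \theta/q$, and a direct calculation confirms that the $|t|$-exponent $\theta(n/q - n/\sigma)$ coming out of Riesz--Thorin coincides with $n/q_\theta - n/p_\theta$. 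Interpolating $\L^q$-boundedness against $\L^\sigma - \L^q$-boundedness analogously produces an $\L^a - \L^q$ bound with scaling $|t|^{n/q - n/a}$ for every $a \in [\sigma, q]$.

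Next, I would build an increasing sequence $r = s_0 < s_1 < \cdots < s_k = q$ inductively. As long as $s_j < \sigma$, I pick $\theta$ so that $p_\theta = s_j$ and set $s_{j+1} \coloneqq q_\theta$; a short computation gives the recursion $1/p - 1/s_{j+1} = (1 + c)(1/p - 1/s_j)$ with $c \coloneqq (1/\sigma - 1/q)/(1/p - 1/\sigma) > 0$, so $(1/p - 1/s_j)$ grows geometrically and forces $s_j \geq \sigma$ after finitely many iterations. At that point, I invoke the second family to set $s_{j+1} \coloneqq q$ and stop. Composing the $k$ resulting one-step bounds and telescoping the scaling exponents produces $\| T(t)^k f \|_q \lesssim |t|^{n/q - n/r} \| f \|_r$, which is the claim. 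The argument is essentially mechanical once the interpolation is in place; the only nontrivial bookkeeping is verifying that the exponent sequence terminates and keeps track of the $|t|$-powers, which is the expected main technical point rather than a genuine conceptual obstacle.
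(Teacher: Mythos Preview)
Your argument is correct: the Riesz--Thorin interpolation between the $\L^p$- and the $\L^\sigma-\L^q$-bound gives the one-step improvements $\L^{s_j}\to\L^{s_{j+1}}$ with the right $|t|$-scaling, the recursion $1/p-1/s_{j+1}=\rho(1/p-1/s_j)$ with $\rho=(1/p-1/q)/(1/p-1/\sigma)>1$ forces $s_j\ge\sigma$ after finitely many steps, and the terminal step from the $\L^q$-/$\L^\sigma-\L^q$-interpolation closes the chain at $q$; the telescoping of the $|t|$-exponents is exact. One point you leave implicit but should mention is that iteration is legitimate because the family lives in $\cL(\L^2)$ by standing assumption, so $T(t)f\in\L^{s_1}\cap\L^2$ whenever $f\in\L^r\cap\L^2$ and the next step applies.

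The paper does not give its own proof of this lemma: it simply cites \cite[Lem.~4.4]{Auscher-Egert_book} and remarks that the choice of reference exponent is immaterial. Your interpolation-and-bootstrap argument is precisely the ``triangle'' mechanism alluded to in the lemma's label and is the standard route taken in that reference, so there is no methodological divergence to discuss.
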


        We remark that implicit constants in both results depend only on $p$, $q$, $r$, $\sigma$ and the constants in the assumption~\cite{Auscher-Egert_book}.
	
	
	\section{$\tau$-dependent Hodge projectors} \label{Sec: Inhom Hodge theory}
	
	We begin with the theory outlined in Figure~\ref{fig: Roadmap} and introduce a family of $\tau$-dependent Hodge projectors. Again, this comprises the corresponding theory for $\cL$ because this operator is of the same type as $L_0$ in one dimension higher.

        \begin{definition}
            Let $\tau \in \R$. The operator
	\begin{equation*}
		S_{\tau} L_{\tau}^{-1} S_{\tau}^* = \begin{bmatrix}
			\tau^2 L_{\tau}^{-1} & -\i \tau L_{\tau}^{-1} \Div_x \\
			-\i \tau \nabla_x L_{\tau}^{-1} & -\nabla_x L_{\tau}^{-1} \Div_x
		\end{bmatrix}
	\end{equation*}
            is called \emph{Hodge projector} associated with $L_\tau$.
        \end{definition}

        By the Lax--Milgram lemma, we have for all $u \in \W^{1,2}_\tau$ and all $\tau \in \R$ that
        \begin{align*}
            \| L_{\tau}^{-1} u \|_{\W^{1, 2}_{\tau}} \leq \lambda^{-1} \| u \|_{\W^{-1, 2}_{\tau}}.
        \end{align*}
        Since $S_\tau: \W^{1, 2}_{\tau} \to \L^2$ is isometric, the Hodge projector is bounded on $\L^2$ with norm $\lambda^{-1}$ independently of $\tau$. Its $\L^p$-boundedness can be characterized as follows.

        \begin{proposition} \label{Inhom Hodge theory: Prop: Hodge-range via Hodge}
		Let $\tau \in \R$ and $p \in (1,\infty)$. The following are equivalent:
                \begin{enumerate}
                \item $S_\tau L_\tau^{-1} S_\tau^*$ is $\L^p$-bounded.
                \item There is a constant $C > 0$ such that 
		          \begin{equation*}
		          	\| L_{\tau}^{-1} u \|_{\W^{1, p}_{\tau}} \leq C \| u \|_{\W^{-1, p}_{\tau}} \qquad (u \in \W^{-1, p}_{\tau} \cap \W^{-1, 2}_{\tau}).
		          \end{equation*}
                \end{enumerate}
            In this case, the bound in (i) and the constant in (ii) can be taken the same up to a factor depending only on $p$ and $n$. 
        \end{proposition}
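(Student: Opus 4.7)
The strategy is to read off (i) and (ii) from the factorisation
\[
S_\tau L_\tau^{-1} S_\tau^* \colon \L^p \xrightarrow{S_\tau^*} \W^{-1,p}_\tau \xrightarrow{L_\tau^{-1}} \W^{1,p}_\tau \xrightarrow{S_\tau} \L^p,
\]
relying on two elementary facts built into the definitions. First, $S_\tau \colon \W^{1,p}_\tau \to \L^p$ is isometric by the very definition of $\|\cdot\|_{\W^{1,p}_\tau}$. Second, $\|S_\tau^* g\|_{\W^{-1,p}_\tau} \leq \|g\|_p$ for every $g \in \L^p$, which is a one-line duality estimate against $\W^{1,p'}_\tau$. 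Composing the three arrows with the bound in (ii) immediately yields (i) on $\L^p \cap \L^2$ with the same constant, settling the easy direction.

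For (i) $\Rightarrow$ (ii), I would lift each $u \in \W^{-1,p}_\tau \cap \W^{-1,2}_\tau$ to a function $g \in \L^p \cap \L^2$ satisfying $S_\tau^* g = u$ and $\|g\|_p \lesssim \|u\|_{\W^{-1,p}_\tau}$; feeding such a $g$ into hypothesis (i) then gives $\|L_\tau^{-1} u\|_{\W^{1,p}_\tau} = \|S_\tau L_\tau^{-1} S_\tau^* g\|_p \lesssim \|u\|_{\W^{-1,p}_\tau}$, which is (ii). A naive Hahn--Banach extension of $v \mapsto \langle u, v\rangle$ along the isometric embedding $S_\tau(\W^{1,p'}_\tau) \sub \L^{p'}$ only delivers some $h \in \L^p$ with $S_\tau^* h = u$ and $\|h\|_p = \|u\|_{\W^{-1,p}_\tau}$; unfortunately $h$ need not lie in $\L^2$, so the $\L^2$-defined operator $L_\tau^{-1}$ cannot be applied to $S_\tau^* h$ directly. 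The remedy is to project $h$ onto $\ran S_\tau$ by setting
\[
g \coloneqq S_\tau \Delta_\tau^{-1} S_\tau^* h = S_\tau \Delta_\tau^{-1} u, \qquad \Delta_\tau \coloneqq S_\tau^* S_\tau = -\Delta_x + \tau^2,
\]
where $\Delta_\tau$ is the Lax--Milgram isomorphism $\W^{1,2}_\tau \to \W^{-1,2}_\tau$ corresponding to the identity matrix as coefficient. Then $S_\tau^* g = \Delta_\tau \Delta_\tau^{-1} u = u$ by construction, and the second formula for $g$ shows $g \in \L^2$ since $\Delta_\tau^{-1} u \in \W^{1,2}_\tau$.

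What remains is the $\L^p$ bound $\|g\|_p \lesssim \|h\|_p$, i.e.~uniform $\L^p$-boundedness in $\tau \in \R$ of the free Hodge projector $S_\tau \Delta_\tau^{-1} S_\tau^*$. Its Fourier multiplier symbol is $\tilde\xi\,\tilde\xi^{\top}/|\tilde\xi|^2$ with $\tilde\xi \coloneqq (\tau,\xi) \in \R^{1+n}$, and the substitution $\xi \mapsto |\tau|\eta$ converts it into $(1,\eta)(1,\eta)^{\top}/(1+|\eta|^2)$, which is $\rC^\infty$ on $\R^n$ with all derivatives bounded. The scalar Mihlin multiplier theorem then gives $\|g\|_p \leq M_{p,n}\|h\|_p$ with a constant depending only on $p$ and $n$, matching the quantitative assertion at the end of the proposition (the case $\tau = 0$ being the classical Riesz-transform symbol, and no operator-valued machinery being needed here). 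The sole delicate point in the whole argument is this $\L^p$-$\L^2$ compatibility of the lift: Hahn--Banach alone preserves the $\L^p$-norm but may escape $\L^2$, whereas $\L^2$-membership is indispensable for applying $L_\tau^{-1}$, and it is the Hodge projection via $\Delta_\tau$ that reconciles the two.
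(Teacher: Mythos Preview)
Your proposal is correct and follows essentially the same route as the paper. Both arguments use the identical lift $g = S_\tau(\tau^2-\Delta_x)^{-1}u$ (your ``second formula''), verify $g\in\L^2$ via Lax--Milgram, and then bound $\|g\|_p$ by reducing to the case $|\tau|=1$ via scaling: you phrase this as the substitution $\xi\mapsto|\tau|\eta$ followed by Mihlin, whereas the paper writes the same dilation on the space side and invokes Bessel-potential lifting for $S_1(1-\Delta_x)^{-1}$. Two minor remarks: your Hahn--Banach detour through $h$ is unnecessary, since $g=S_\tau\Delta_\tau^{-1}u$ can be defined and estimated directly (as the paper does), and using $h$ forces you to silently identify the $\L^p$-extended multiplier $S_\tau\Delta_\tau^{-1}S_\tau^*$ applied to $h\notin\L^2$ with the Lax--Milgram object $S_\tau\Delta_\tau^{-1}u$ --- a routine compatibility, but one you do not spell out. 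On the positive side, your Mihlin argument covers $\tau=0$ (Riesz symbol) in the same stroke, whereas the paper defers that homogeneous case to \cite[Lem.~13.4]{Auscher-Egert_book}.
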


        \begin{proof}
        The implication (ii) $\implies$ (i) follows as above since $S_\tau: \W^{1, p}_{\tau} \to \L^p$ is isometric. The converse for $\tau=0$ is done in \cite[Lem.~13.4]{Auscher-Egert_book} and the argument makes the dependence of constants transparent. It remains to prove (i) $\implies$ (ii) in the inhomogeneous case $\tau \neq 0$, which, in fact, is much easier than the homogeneous counterpart.

        To this end, we assume (i) with $\L^p$-bound $C'$ and let $u \in \W^{-1, p}_\tau \cap \W^{-1, 2}_\tau$. Since $S_\tau^* S_\tau = (\tau^2 -\Delta_x)$, we can write $u = S_\tau^* v$ with $v \coloneqq S_\tau (\tau^2 - \Delta_x)^{-1} u \in \L^p \cap \L^2$. In this way, we obtain
        \begin{equation*}
        \| L_\tau^{-1} u \|_{\W^{1, p}_\tau} = \| S_\tau L_\tau^{-1} S_\tau^* v \|_p \leq C' \| v \|_p
        \end{equation*}
        and we need to bound $v$ independently of $\tau$. Through dilations with parameters $\tau, \tau^{-1}$, we find
        \begin{equation*}
        v = S_\tau (\tau^2 - \Delta_x)^{-1} u = \tau^{-1} \delta_\tau \Big(S_1 (1- \Delta_x)^{-1} \Big) \delta_{\tau^{-1}} u.
        \end{equation*}
        A composition of three bounded operators acts on the right-hand side: $\tau^{-1} \delta_\tau \colon \L^p \to \L^p$ has norm $|\tau|^{-1-n/p}$ by the transformation rule, $S_1 (1-\Delta_x)^{-1} \colon \W^{-1,p} \to \L^p$ is bounded by the lifting properties of the Bessel potentials~\cite[Sec.~1.3.1]{Grafakos-Modern_book}, and $\delta_{\tau^{-1}} \colon \W^{-1,p}_\tau \to \W^{1,p}$ has norm $|\tau|^{1+n/p}$ as the dual of $|\tau|^n \delta_{\tau}: \W^{1,p'} \to \W^{1,p'}_\tau$. Thus, $\|v\|_p \les \|u\|_{\W^{-1,p}_\tau}$
        with an implicit constant independent of $\tau$, and we are done.
        \end{proof}

	
	\section{The first-order Dirac operator $DB$} \label{Sec: DB}
	
	As a final ingredient for the proof of our main result, we introduce the perturbed Dirac operators $DB$ and the key exponent $p_+(DB)$ as in~\cite{Auscher-Stahlhut_Diss}. There is one new result in this section: we characterize $p_+(DB)$ through the Hodge family $(S_\tau L_\tau^{-1} S_\tau^*)_{\tau \in \R^*}$. This necessitates sorting out certain subtleties related to compatible inverses of linear operators.
    
    Recall the block form of the coefficients of $\cL$ in \eqref{eq: Splitting of A}. The ellipticity assumption on $A$ implies that $A$ is invertible in $\L^{\infty}$ and we can introduce the following matrix-valued functions:
	
	\begin{definition}[{\cite{Auscher-Axelsson_weighted_max_reg}}]
		We let 
		\begin{equation*}
			\underline{A} \coloneqq \begin{bmatrix}
				1 & 0 \\ c & d
			\end{bmatrix}, \quad  
			\overline{A} \coloneqq \begin{bmatrix}
				a & b \\ 0 & 1
			\end{bmatrix}, \quad
            \text{and} \quad B = \underline{A} \overline{A}^{-1}.
		\end{equation*}
	\end{definition}

        Next, we define (perturbed) Dirac operators. 
	
	\begin{definition} \label{DB: Def: D and DB}
		We define the \emph{Dirac operator} $D$ in the distributional sense as
		\begin{equation*}
			D \coloneqq \begin{bmatrix}
				0 & \Div_x \\ - \nabla_x & 0
			\end{bmatrix} \colon \L^1_{\loc}(\R^n)^{1+n} \to \cD'(\R^n)^{1+n}
		\end{equation*}
		and for $p \in (1, \infty)$ we denote by $D_p$ its maximal restriction to a linear operator in $\L^p$. The \emph{perturbed Dirac operator} is the composition operator $DB$ and its part in $\L^p$ is $(DB)_p = D_p B$.
	\end{definition}

        \begin{remark} \label{DB: Rem: Standard properties}
        The operator $D_p$ in $\L^p$ is closed as a first-order differential operator with maximal $\L^p$-domain. Since $B$ is bounded, also $(DB)_p$ is closed.

        \end{remark}

         We usually drop the subscript $p$ from our notation when $p = 2$. It is shown in \cite[Prop.~2.5]{A-K-McI_quadratic_estimates} that $DB$ is a \emph{bisectorial} operator in $\L^2$: There exists some  $\mu \in (0, \nicefrac{\pi}{2})$ such that the spectrum of $DB$ is contained in the closure of the bisector
	\begin{equation*}
		\S_{\mu} \coloneqq \{ z \in \C \colon |\arg(\pm z)| < \mu \}
	\end{equation*}
	and for each $\nu \in (\mu, \nicefrac{\pi}{2})$ there is $C > 0$ such that 
	\begin{equation*}
		\| (1 + z DB)^{-1} \|_{\L^2 \to \L^2} \leq C \qquad (z \in \C \setminus \overline{\S_{\nu}}).
	\end{equation*}
        
        \begin{theorem}[{\cite{Auscher-Stahlhut_DB-bisectoriality}, \cite[Thm.~3.6]{Auscher-Stahlhut_Diss}}]
        There is a maximal open interval $I(DB) \subseteq (1,\infty)$ around $2$ such that $(DB)_p$ is bisectorial in $\L^p$ for all $p \in I(DB)$. Moreover, bisectoriality fails at the endpoints.
        \end{theorem}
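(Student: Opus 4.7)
The plan is to set $\mathcal{P} \coloneqq \{p \in (1,\infty) : (DB)_p \text{ is bisectorial in } \L^p\}$, prove that $\mathcal{P}$ is open and that its intersection with each side of $2$ is an interval, and then define $I(DB)$ as the connected component of $2$ in $\mathcal{P}$. Failure of bisectoriality at the endpoints is built into this construction: had $(DB)_{p^\star}$ been bisectorial at an endpoint $p^\star$, the openness argument applied at $p^\star$ would push a whole neighborhood into $\mathcal{P}$, forcing $p^\star$ to be interior to $I(DB)$ and contradicting maximality.

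The starting point $2 \in \mathcal{P}$ is the theorem of Axelsson--Keith--McIntosh recalled above. To establish openness at a generic $p_0 \in \mathcal{P}$, fix $\nu \in (\mu, \pi/2)$ and consider the resolvent family $\mathcal{T} \coloneqq \{(1 + zDB)^{-1} : z \in \C \setminus \overline{\S_\nu}\}$, which is uniformly $\L^{p_0}$- and $\L^2$-bounded by hypothesis. The decisive extra ingredient is that $\mathcal{T}$ satisfies $\L^2$ off-diagonal estimates, obtained either by a direct Caccioppoli argument on the equation $(1+zDB)F = G$ exploiting bisectoriality and the boundedness of $B$, or --- what amounts to the same --- by entrywise reducing $(1+zDB)^{-1}$ to resolvents and gradients of resolvents of the second-order operators $L_\tau$ from Section~\ref{Sec: L2 theory} (with $\tau$ linked to $z$), for which Gaffney-type estimates are standard and, crucially, uniform in $\tau$. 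Sobolev embedding on the second-order side then upgrades these to $\L^2 - \L^q$ off-diagonal estimates for some $q > 2$. Feeding the $\L^{p_0}$-boundedness and the $\L^2 - \L^q$ off-diagonal information into Lemma~\ref{Lp-Lq estimates: Lem: Lp -> Lq + L2 ODE implies Lr -> Ls ODE} yields $\L^r - \L^\sigma$ off-diagonal estimates at intermediate exponents, and a bootstrap via Lemma~\ref{Lp-Lq estimates: Triangle Argument} upgrades these to $\L^r$-uniform bounds on $\mathcal{T}$ for all $r$ in an open neighborhood of $p_0$, which is equivalent to bisectoriality of $(DB)_r$.

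For the interval property, any two points $p_0, p_1 \in \mathcal{P}$ on the same side of $2$ are joined by applying Stein's complex interpolation theorem to the holomorphic family $z \mapsto (1+zDB)^{-1}$ on $\L^{p_\theta}$, $\theta \in (0,1)$ (or Riesz--Thorin for each fixed $z$), which transports uniform bounds from the endpoints to the whole interval. The main obstacle throughout is producing off-diagonal smoothing for the first-order object $(1+zDB)^{-1}$ without any regularization built into the perturbed Dirac operator itself. This is precisely where the second-order reformulation via $L_\tau$ becomes indispensable: Sobolev embedding and Caccioppoli provide the required $\L^2 - \L^q$ gain on the elliptic side and then transfer back to the Dirac system through the algebraic correspondence.
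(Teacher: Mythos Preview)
This statement is quoted from \cite{Auscher-Stahlhut_DB-bisectoriality, Auscher-Stahlhut_Diss} and is not proved in the present paper, so there is no in-paper argument to compare against; I assess your proposal on its own merits.

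The global architecture (show $2\in\mathcal P$, prove openness, get the interval property by interpolation, take the connected component of $2$) is correct, but your openness step has a real gap. You claim that $(1+zDB)^{-1}$ reduces ``entrywise to resolvents and gradients of resolvents of the second-order operators $L_\tau$'' and that Sobolev embedding then yields $\L^2-\L^q$ off-diagonal bounds for some $q>2$. The algebra in Lemma~\ref{DB: Lem: Algebra} says otherwise: with $z=-\i\tau^{-1}$ one lands on the \emph{Hodge projector} $S_\tau L_\tau^{-1}S_\tau^*$, not on resolvents $(1+s^2L_\tau)^{-1}$. Its $(\parallel,\parallel)$-block $\nabla_x L_\tau^{-1}\Div_x$ carries no Sobolev smoothing: a scaling check (dilate by $\tau$ and send $\tau\to0$, recovering the homogeneous Hodge projector $\nabla_x L_0^{-1}\Div_x$) rules out any uniform $\L^2-\L^q$ bound with the required factor $|t|^{\nicefrac{n}{q}-\nicefrac{n}{2}}=|\tau|^{\nicefrac{n}{2}-\nicefrac{n}{q}}$. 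Indeed, the $\L^q$-boundedness of this block is exactly the Meyers-type quantity the paper characterizes, so assuming extra smoothing here would beg the question. Consequently the input you want to feed into Lemmas~\ref{Lp-Lq estimates: Lem: Lp -> Lq + L2 ODE implies Lr -> Ls ODE} and~\ref{Lp-Lq estimates: Triangle Argument} is unavailable. A secondary issue: Lemma~\ref{Lp-Lq estimates: Triangle Argument} only yields bounds for the iterated family $((1+\i tDB)^{-k})_{t}$, and there is no mechanism here to descend from high resolvent powers back to the resolvent itself.

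In the cited references openness is obtained without any $\L^2-\L^q$ hypercontractive input. One proves $\L^2$ off-diagonal decay for $(1+\i tDB)^{-1}$ directly by an exponential-weight/commutator argument exploiting only the first-order structure, interpolates with the assumed $\L^{p_0}$-bound to obtain $\L^r$ off-diagonal decay for $r$ between $2$ and $p_0$, and then uses Calder\'on--Zygmund-type extrapolation (which needs only $\L^r$ off-diagonal decay, not $\L^r-\L^q$ smoothing) to push uniform $\L^p$-boundedness to an open neighbourhood of $p_0$.
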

        
        This result gives rives to the key exponent $p_+(DB)$.
	
	\begin{definition}[{\cite[Sec.~3.2]{Auscher-Stahlhut_Diss}}] \label{DB: Def: Auscher-Stahlhut interval}
		We denote by $p_+(DB)>2$ the upper endpoint of $I(DB)$.
	\end{definition}

       \begin{remark} \label{DB: Rem: B invertible}
        Due to our strong ellipticity assumption, $A$ is invertible in $\L^{\infty}$. Hence, $B$ is also invertible, and in particular, multiplication by $B$ induces  an isomorphism on any $\L^p$ space. This would not be true under weaker ellipticity conditions as they are typically imposed for elliptic systems and avoids the discussion of $\L^p$-coercivity, compare with \cite[Rem.~3.5]{Auscher-Stahlhut_Diss}.
	\end{remark}

    	The following considerations were tacitly used in \cite{Auscher-Stahlhut_Diss}. We provide the details for the reader's convenience and to clarify the necessity of restricting ourselves to intervals around $p=2$.
    
        \begin{lemma} \label{DB: Lem: Char of I(DB)}
             Let $p \in (1, \infty)$. Then the following assertions are equivalent. 
        
             \begin{enumerate}
                 \item $((1 + \i t DB)^{-1})_{t \in \R^*}$ is $\L^p$-bounded. 
        
                 \item $p \in I(DB)$. 
        
                 \item $p \in I(DB)$ and the resolvents $(1+\i t (DB)_p)^{-1}$ and $(1+\i t (DB)_2)^{-1}$ agree on $\L^p \cap \L^2$ for all $t \in \R^*$.
             \end{enumerate}
         \end{lemma}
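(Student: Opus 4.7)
The implications (iii) $\Rightarrow$ (ii) and (iii) $\Rightarrow$ (i) are immediate: bisectoriality of $(DB)_p$ on $\L^p$ provides a uniform bound on $(1+\i t (DB)_p)^{-1}$, which by compatibility coincides with $(1+\i t DB)^{-1}$ on $\L^p \cap \L^2$. The content of the lemma lies in the converse.

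For (i) $\Rightarrow$ (iii), I would build the $\L^p$-resolvent of $(DB)_p$ as the density extension of $R(\i t) \coloneqq (1 + \i t DB)^{-1}$. The uniform bound in (i) extends $R(\i t)$ to a bounded operator $\tilde R(\i t) \in \cL(\L^p)$. The identity $\i t \, DB \, R(\i t) f = f - R(\i t) f$ on $\L^p \cap \L^2$, read distributionally and combined with the closedness of $(DB)_p$ (Remark~\ref{DB: Rem: Standard properties}), yields $\tilde R(\i t)\L^p \sub \dom((DB)_p)$ and $(1 + \i t (DB)_p)\tilde R(\i t) = I$ on $\L^p$. Once the injectivity of $1 + \i t (DB)_p$ is established by a duality argument (see below), this identifies $\tilde R(\i t) = (1 + \i t (DB)_p)^{-1}$ and simultaneously yields the compatibility claim. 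Bisectoriality of $(DB)_p$ then follows via a Neumann series: the estimate $\|(DB)_p \tilde R(\i t)\|_{\L^p \to \L^p} \leq (1+C)/|t|$ and the factorization $1 + z (DB)_p = (1 + \i t (DB)_p)\bigl(I + (z-\i t)(DB)_p \tilde R(\i t)\bigr)$, read on $\dom((DB)_p)$, yield a bounded two-sided inverse whenever $|z - \i t| < |t|/(1+C)$. The union of these disks over $t \in \R^*$ is an open cone around $\i\R^*$, hence $p \in I(DB)$.

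For (ii) $\Rightarrow$ (iii), set $v_p \coloneqq (1+\i t (DB)_p)^{-1} f$ and $v_2 \coloneqq R(\i t) f$ for $f \in \L^p \cap \L^2$. Their difference $w = v_p - v_2 \in \L^p + \L^2$ satisfies $(1 + \i t DB) w = 0$ in $\cD'(\R^n)^{1+n}$. Testing against $\varphi \in \smooth[\R^n]^{1+n}$ and using $D^* = D$ in the $\L^2$-pairing gives $\langle w, (1 - \i t B^* D) \varphi \rangle = 0$, so $w$ annihilates the range of $1 - \i t B^*D$ on smooth compactly supported functions. Density of this range in $\L^{p'} \cap \L^2 = (\L^p + \L^2)^*$ follows from the dual statement to (ii), namely $p' \in I(B^* D)$, which holds by Banach-space adjunction since $(DB)^* = B^*D$. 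This forces $w = 0$, completing compatibility.

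The principal technical obstacle is the duality-based density/injectivity step used in both directions. I would resolve it by identifying $\bigl((DB)_p\bigr)^*$ with $(B^*D)_{p'}$ through a careful check of maximal domains, relying on Remark~\ref{DB: Rem: B invertible} to rule out the $\L^p$-coercivity subtleties that would otherwise complicate the transposition of the bisectoriality interval under $p \mapsto p'$; the resulting $\L^{p'}$-bisectoriality of $B^*D$ then supplies both the injectivity input for (i) $\Rightarrow$ (iii) and the density input for (ii) $\Rightarrow$ (iii).
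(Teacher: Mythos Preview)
Your argument for (i) $\Rightarrow$ (iii) is essentially in line with the paper's (i) $\Rightarrow$ (ii): build the $\L^p$-extension, check it is a right inverse by closedness, and obtain injectivity by duality. Two minor points deserve care. First, showing that $(DB)_p$ is bisectorial is not the same as showing $p \in I(DB)$, since $I(DB)$ is by definition the maximal interval around $2$; the paper closes this gap by observing that assumption (i) interpolates between $p$ and $2$, so the whole segment is covered. Second, the paper's injectivity argument is slightly more economical than yours: it only uses that the $\L^2$-adjoint resolvent $(1-\i t B^*D)^{-1}$ is $\L^{p'}$-bounded (a direct consequence of (i) by duality), not full $\L^{p'}$-bisectoriality of $B^*D$.

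The real problem is your (ii) $\Rightarrow$ (iii). You claim that the range of $1-\i t B^*D$ on $\smooth$ is dense in $\L^{p'} \cap \L^2$ because ``$p' \in I(B^*D)$''. But bisectoriality of $(B^*D)_{p'}$ in $\L^{p'}$ and of $(B^*D)_2$ in $\L^2$ only gives surjectivity in each space separately. To approximate a given $\psi \in \L^{p'} \cap \L^2$ \emph{simultaneously} in both norms, you would need a single preimage $u \in \L^{p'} \cap \L^2$, which amounts to asking that $(1-\i t (B^*D)_{p'})^{-1}\psi$ and $(1-\i t (B^*D)_2)^{-1}\psi$ coincide. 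That is precisely the compatibility statement (iii) for the operator $B^*D$, so the argument is circular. Knowing merely that $p \in I(DB)$ gives you $\L^p$-boundedness of $(1+\i t(DB)_p)^{-1}$, not of the $\L^2$-resolvent $(1+\i t DB)^{-1}$; the two agree on $\L^p \cap \L^2$ only once (iii) is established.

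The paper avoids this circularity by a different mechanism: it shows that the set $M \subseteq I(DB)$ of exponents for which compatibility holds is non-empty (it contains $2$), open, and closed in $I(DB)$, the latter two via \v{S}ne\u{\ii}berg's stability theorem, which provides compatible inverses for nearby exponents automatically. This connectedness argument is what makes the step work.
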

        
        \begin{proof}
        The implication \emph{(iii) $\implies$ (i)} is obvious but the rest requires some work.
        
        \emph{(i) $\implies$ (ii).} We first observe that it is enough to conclude that $(DB)_p$ is bisectorial. Indeed, since the assumption (i) holds for $p=2$ and interpolates with respect to $p$, this would also imply $p \in I(DB)$. To this end, we denote by $R_p(t)$ the unique bounded extension of $(1 + \i t DB)^{-1}$ to $\L^p$ and show that $1 + \i t (DB)_p$ is bijective with $(1 + \i t (DB)_p)^{-1} = R_p(t)$. 
         
        To prove that $1 + \i t (DB)_p$ is surjective, we let $f \in \L^p$ and pick a sequence $(f_j)_j \subseteq \L^p \cap \L^2$ with $f_j \to f$ in $\L^p$. Then $u_j \coloneqq (1 + \i t DB)^{-1} f_j \to R_p (t) f$ in $\L^p$. Since $(u_j)_j \subseteq \L^p \cap \dom(DB)$ with $\i t DB u_j = f_j - u_j \in \L^p$, we also have $(u_j)_j \subseteq \dom(D_pB)$ with $(1 + \i t (DB)_p)u_j = f_j \to f$ in $\L^p$. Now, $1 + \i t (DB)_p$ is closed and $R_p(t) f \in \dom(D_pB)$ as well as $(1 + \i t (DB)_p) R_p (t) f = f$ follow. Hence, $R_p(t)$ is a right-inverse for $1 + \i t (DB)_p$. 
        			
        As for injectivity, we pick $u \in \dom(DB)_p$ with $(1 + \i t (DB)_p) u = 0$ and prove $u = 0$. Given $\varphi \in \rC_{\cc}^{\infty}$, we write
        \begin{align*}
            0 &= \langle R_p(t) 0, \varphi \rangle 
            \\&= \langle (1 + \i t DB)^{-1}(u + \i t (DB)_p u), \varphi \rangle. 
        \intertext{Since $(1 + \i t DB)^{-1}$ is $\L^p$-bounded, the dual family $(1 - \i t B^* D)^{-1}$ is $\L^{p'}$-bounded. We set $v \coloneqq (1 - \i t B^* D)^{-1} \varphi$. Hence, $v \in \L^{p'} \cap \dom(D)$ with $- \i t B^* D v = \varphi - v \in \L^{p'}$, so that even $v \in \dom(D_{p'}) \cap \dom(D)$. Through smooth truncation and convolution, we can approximate $v$ by a sequence $(v_j) \subseteq \rC_{\cc}^\infty$ such that $v_j \to v$ and $Dv_j \to Dv$, both in $\L^{p'}$. Consequently, we can continue by}
            &= \langle u + \i t (DB)_p u, v \rangle
            \\&= \lim_{j \to \infty} \langle u + \i t (DB)_p u, v_j \rangle
            \\&= \lim_{j \to \infty} \langle u, (1 - \i t B^* D)v_j \rangle
            \\&=  \langle u, (1 - \i t B^* D)v \rangle
            \\&= \langle u, \varphi \rangle.
        \end{align*}
        Since $\varphi \in \rC_{\cc}^\infty$ was arbitrary, $u = 0$ follows.
        
         \emph{(ii) $\implies$ (iii).} Let $M \subseteq I(DB)$ be the set of exponents $p$ for which we have compatibility with the $\L^2$-resolvent of $DB$ as stated in (iii). $M$ is non-empty because it contains $2$. We claim that it is open and closed in $I(DB)$ and hence equal to $I(DB)$.
         
        Openess follows directly from \u{S}ne\u{\ii}berg's stability theorem \cite{ABES-Non-Local, Sneiberg_Extrapolation}. Next, we take a sequence $(p_j)_j \subseteq M$ with $p_j \to p \in I(DB)$. Again by \u{S}ne\u{\ii}berg's theorem and for large enough $j$ we have
        \begin{align*}
            (1 + \i t (DB)_p)^{-1} f = (1 + \i t (DB)_{p_j})^{-1} f = (1 + \i t DB)^{-1} f
        \end{align*}
        for all $f \in \L^p \cap \L^{p_j} \cap \L^2$. Now, $p \in M$ follows since $\L^p \cap \L^{p_j} \cap \L^2$ is dense in $\L^p \cap \L^2$.
         \end{proof}

        The following algebraic identity links the $DB$-resolvents with the $\tau$-dependent Hodge projectors. Note that the formulation in \cite{Auscher-Stahlhut_Diss} uses the inhomogeneous gradients $[1, \i t \nabla_x]^\top = \i t S_{-t^{-1}}$ in place of $S_t$.

        \begin{lemma}
            \label{DB: Lem: Algebra}
            Let $\tau \in \R^*$ and $f = [f_\perp, f_\parallel]^\top \in \L^2$. Then
		\begin{equation*}
			(1 - \i \tau^{-1} DB)^{-1} f
			= \overline{A} S_\tau L_{\tau}^{-1} S_\tau^* M f + \begin{bmatrix} b f_{\parallel} \\ f_{\parallel} \end{bmatrix}, \quad \text{where }  M \coloneqq \begin{bmatrix}
				1 & - b \\ 0 & - d
			\end{bmatrix}.
		\end{equation*}
        \end{lemma}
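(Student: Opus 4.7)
The strategy is direct verification: setting $u$ to be the right-hand side of the claimed identity, I will apply $1-\i\tau^{-1}DB$ to $u$ and confirm that one recovers $f$. Write
\begin{equation*}
u \coloneqq \overline{A} S_\tau w + \begin{bmatrix} bf_\parallel \\ f_\parallel \end{bmatrix}, \qquad w \coloneqq L_\tau^{-1} S_\tau^* Mf,
\end{equation*}
where $w \in \W^{1,2}_\tau(\R^n)$ is well-defined by the Lax--Milgram isomorphism property of $L_\tau$, since a short computation shows $S_\tau^* M f \in \W^{-1,2}_\tau(\R^n)$ for $f \in \L^2$.

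The first step is to compute $Bu$ using the key algebraic identity $B\overline{A} = \underline{A}\overline{A}^{-1}\overline{A} = \underline{A}$. Combined with the explicit formula $\overline{A}^{-1} = \bigl[\begin{smallmatrix} a^{-1} & -a^{-1}b \\ 0 & 1 \end{smallmatrix}\bigr]$, one finds $\overline{A}^{-1}[bf_\parallel, f_\parallel]^\top = [0, f_\parallel]^\top$, so that
\begin{equation*}
Bu = \underline{A}\, S_\tau w + \underline{A}\begin{bmatrix} 0 \\ f_\parallel \end{bmatrix} = \begin{bmatrix} \i\tau w \\ \i\tau cw + d\nabla_x w + d f_\parallel \end{bmatrix}.
\end{equation*}
This expression makes clear that $Bu \in \dom(D)$ in the distributional sense, with $DBu$ in $\L^2$ thanks to the equation satisfied by $w$.

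The second step is to apply $D$ to $Bu$ and invoke \eqref{eq: Formula L-tau}. The $\perp$-component of $DBu$ is $\Div_x(d\nabla_x w) + \i\tau\Div_x(cw) + \Div_x(df_\parallel)$, which by the formula for $L_\tau$ equals $-L_\tau w - \i\tau b\nabla_x w + \tau^2 aw + \Div_x(df_\parallel)$. The $\parallel$-component is simply $-\i\tau\nabla_x w$. Multiplying by $-\i\tau^{-1}$ and adding to $u$, the terms involving $\i\tau aw$, $b\nabla_x w$ (in the $\perp$-block) and $\nabla_x w$ (in the $\parallel$-block) cancel exactly, leaving
\begin{equation*}
(1-\i\tau^{-1}DB)u = \begin{bmatrix} bf_\parallel + \i\tau^{-1}\bigl(L_\tau w - \Div_x(df_\parallel)\bigr) \\ f_\parallel \end{bmatrix}.
\end{equation*}
The defining relation $L_\tau w = S_\tau^* Mf = -\i\tau(f_\perp - bf_\parallel) + \Div_x(df_\parallel)$ then collapses the $\perp$-entry to $f_\perp$, finishing the verification.

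The computation is mechanical; the main pitfall is notational bookkeeping. One must track the signs introduced by $S_\tau^* = [-\i\tau, -\Div_x]$, by the prefactor $-\i\tau^{-1}$, and by the off-diagonal $-\nabla_x$ in $D$, and ensure the resulting cancellations line up with the specific choice of $M$. Once the identity $B\overline{A}=\underline{A}$ is exploited at the outset, everything else is a clean block-matrix calculation, so no additional analytical input is required beyond the already established mapping properties of $L_\tau^{-1}$.
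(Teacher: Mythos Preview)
Your proof is correct. The verification is carried out cleanly: the identity $B\overline{A}=\underline{A}$ reduces the computation of $Bu$ to a block-matrix product, the equation for $L_\tau$ from \eqref{eq: Formula L-tau} converts the $\perp$-component of $DBu$ into a form where the cancellations with $u$ are transparent, and the defining relation $L_\tau w = S_\tau^* Mf$ closes the argument. One small point you might make explicit is that $u\in\dom((DB)_2)$; this follows because $Bu=[\i\tau w,\ \i\tau cw+d\nabla_x w+df_\parallel]^\top$ has $\nabla_x$ of the first component and $\Div_x$ of the second both in $\L^2$, the latter since $(DBu)_\perp=\i\tau(f_\perp-bf_\parallel)-\i\tau b\nabla_x w+\tau^2 aw$ after substituting $L_\tau w=S_\tau^*Mf$.

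The paper itself does not include a proof of this lemma; it merely records the identity and points to \cite{Auscher-Stahlhut_Diss}, noting the change of normalization from $[1,\i t\nabla_x]^\top=\i t S_{-t^{-1}}$ to $S_\tau$. Your direct verification is therefore a welcome self-contained argument that the paper defers to the literature.
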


        With the algebra in place, we can prove the characterization of $p_+(DB)$ alluded to above.
	
	\begin{proposition} \label{DB: Prop: I(DB) = Hodge((L_tau)_tau)}
		Let $p \in (1, \infty)$. The following are equivalent:
        \begin{enumerate}
            \item $p \in I(DB)$.
            \item $(S_{\tau} L_{\tau}^{-1} S_{\tau}^*)_{\tau \in \R^*}$ is $\L^p$-bounded.
        \end{enumerate}
        In particular, the set of exponents $p$, for which (ii) holds, is an open interval with upper endpoint $p_+(DB)$.
	\end{proposition}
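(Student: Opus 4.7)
The plan is to combine the resolvent characterization of $I(DB)$ given by Lemma~\ref{DB: Lem: Char of I(DB)} with the algebraic identity of Lemma~\ref{DB: Lem: Algebra}, which expresses $(1-\i\tau^{-1}DB)^{-1}$ directly in terms of $S_\tau L_\tau^{-1} S_\tau^*$. Since the substitution $t = -\tau^{-1}$ is a bijection of $\R^*$ onto itself, uniform $\L^p$-bounds in $t \in \R^*$ on the $DB$-resolvent correspond bijectively to uniform $\L^p$-bounds in $\tau \in \R^*$ on the Hodge family. The only algebraic ingredient I need is that multiplication by the matrices $\overline{A}$, $M$, and the correction matrix $\left[\begin{smallmatrix} 0 & b \\ 0 & 1 \end{smallmatrix}\right]$ appearing in the identity induce bounded and \emph{invertible} operators on any $\L^p$-space. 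This is automatic from strong ellipticity: testing $\re \langle A \xi, \xi \rangle \geq \lambda |\xi|^2$ against $\xi = [1,0]^\top$ and $\xi = [0,\xi']^\top$ yields $\re a \geq \lambda$ and $\re \langle d\xi', \xi'\rangle \geq \lambda|\xi'|^2$, so the block-triangular matrices $\overline{A}$ and $M$ are pointwise invertible with inverses uniformly bounded in $x \in \R^n$, hence induce isomorphisms on every $\L^p(\R^n)$.

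For the implication (i)$\Rightarrow$(ii) I would take $p \in I(DB)$, invoke Lemma~\ref{DB: Lem: Char of I(DB)} to obtain uniform $\L^p$-boundedness of $((1-\i\tau^{-1}DB)^{-1})_{\tau \in \R^*}$, and rewrite the identity of Lemma~\ref{DB: Lem: Algebra} in the form
\begin{equation*}
S_\tau L_\tau^{-1} S_\tau^* M f = \overline{A}^{-1}\bigg((1-\i\tau^{-1} DB)^{-1} f - \begin{bmatrix} b f_\parallel \\ f_\parallel \end{bmatrix}\bigg)
\end{equation*}
for $f \in \L^p \cap \L^2$. The right-hand side is then uniformly bounded in $\L^p$ with respect to $\tau \in \R^*$, and substituting $f$ by $M^{-1} f$ (which preserves $\L^p \cap \L^2$) delivers the uniform $\L^p$-bound on $(S_\tau L_\tau^{-1} S_\tau^*)_{\tau \in \R^*}$ required by Definition~\ref{Lp-Lq estimates: Def: Lp-Lq estimates}.

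The reverse direction (ii)$\Rightarrow$(i) is symmetric: one reads the identity of Lemma~\ref{DB: Lem: Algebra} as it stands and bounds the right-hand side in $\L^p$ using the assumed uniform bound on $(S_\tau L_\tau^{-1} S_\tau^*)_{\tau \in \R^*}$ together with $\L^\infty$-boundedness of $\overline{A}$, $M$, and the correction matrix; this yields $\L^p$-boundedness of $((1-\i\tau^{-1}DB)^{-1})_{\tau \in \R^*}$, hence of $((1+\i t DB)^{-1})_{t \in \R^*}$, so Lemma~\ref{DB: Lem: Char of I(DB)} places $p$ in $I(DB)$. The final "in particular" assertion is then immediate from the equivalence together with Definition~\ref{DB: Def: Auscher-Stahlhut interval}, since $I(DB)$ is by definition the maximal open interval around $2$ with upper endpoint $p_+(DB)$. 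I do not anticipate any substantive obstacle: once one has the two preceding lemmas at hand, the proposition reduces to careful bookkeeping of invertible bounded multiplication operators on $\L^p$.
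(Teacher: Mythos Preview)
Your proposal is correct and follows essentially the same line as the paper's own proof: both combine Lemma~\ref{DB: Lem: Algebra} with Lemma~\ref{DB: Lem: Char of I(DB)} and the $\L^\infty$-invertibility of $\overline{A}$ and $M$ (the paper phrases the latter as ``$A$ is strongly elliptic, so is $d$ in one dimension lower and therefore $M$ is invertible in $\L^\infty$''). One small slip: the correction matrix $\left[\begin{smallmatrix} 0 & b \\ 0 & 1 \end{smallmatrix}\right]$ is \emph{not} invertible, but you never actually use its invertibility---only its $\L^\infty$-boundedness---so the argument is unaffected.
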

	
	\begin{proof}
        In view of Lemma~\ref{DB: Lem: Algebra} and since $\overline{A}$, $\overline{A}^{-1}$ and $b$ are in $\L^\infty$, the following two uniform bounds are equivalent:
		\begin{align*}
			\| (1 + \i t DB)^{-1} f \|_p &\les \| f \|_p \qquad (t \in \R^*, f \in \L^p \cap \L^2),\\
			\| S_{\tau} L_{\tau}^{-1} S_{\tau}^* M f \|_p &\les \| f \|_p \qquad (\tau \in \R^*, f \in \L^p \cap \L^2).
		\end{align*}
        According to Lemma~\ref{DB: Lem: Char of I(DB)}, the upper estimate is equivalent to (i). Since $A$ is strongly elliptic, so is $d$ in one dimension lower and therefore $M$ is invertible in $\L^\infty$. Thus, the lower estimate is equivalent to the $\L^p$-boundedness of the Hodge projectors $(S_{\tau} L_{\tau}^{-1} S_{\tau}^*)_{\tau \in \R^*}$ as stated in (ii).
	\end{proof}
	
	
	\section{Global characterization of the Meyers exponent} \label{Sec: Meyers(L_tau) = P_+(L_tau)}
	
	We start with the proof of our main theorem. In this section, we carry out the globalization step in Figure~\ref{fig: Roadmap}.
	
	\begin{definition}
		Let $\tau \in \R$ and $Q = Q(x, r) \sub \R^n$. We say that $u$ is \emph{$\boldsymbol{L_{\tau}}$-harmonic in $Q$} if $u \in \W_{\loc}^{1,2}(Q)$ satisfies $L_{\tau} u = 0$ weakly in $Q$, that is  
		\begin{equation*}
			\int_Q A S_\tau u \cdot \overline{S_\tau \varphi} \, \d x = 0 \qquad (\varphi \in \smooth[Q]).
		\end{equation*}
	\end{definition}
	
	\begin{definition}
            \begin{enumerate}
                \item 	The \emph{Meyers exponent} $\Meyers(L_{\tau})$ of a fixed operator $L_\tau$ is the supremum of all $p \in [2, \infty)$ for which the following holds true: There is a constant $C > 0$ such that for all open axes-parallel cubes $Q \sub \R^n$ and $L_{\tau}$-harmonic $u$ in $2Q$ it follows that
		\begin{equation*}
			\left( \fint_Q |S_\tau u|^p \, \d x \right)^{\frac{1}{p}} \leq C \left( \fint_{2 Q} | S_\tau u|^2 \, \d x \right)^{\frac{1}{2}}.
		\end{equation*}

                \item Given $I \subseteq \R$, the Meyers exponent $\Meyers((L_\tau)_{\tau \in I})$ of the collection of operators $(L_\tau)_{\tau \in I}$ is the supremum of all $p \in [2, \infty)$ for which (i) holds for all $\tau \in I$ and a constant that does not depend on $\tau$.
            \end{enumerate}
	\end{definition}
	
	Refining an argument from \cite{Shen_Lp-extrapolation}, we provide the following characterization.
	
	\begin{proposition}   \label{M(L_t) = P_+(L_t): Prop: M(L_t) = P_+(L_t)}
		Let $n \geq 2$. For any $\tau \in \R$ and $I \subseteq \R$ the following hold true:
        
            \begin{enumerate}
                \item We have $\displaystyle \Meyers(L_{\tau}) = \sup \Big\{p \geq 2 : S_{\tau} L_{\tau}^{-1} S_{\tau}^* \text{ is $\L^p$-bounded} \Big\}$.
                \item We have $\displaystyle \Meyers((L_{\tau})_{\tau \in I}) = \sup \Big\{p \geq 2 : (S_{\tau} L_{\tau}^{-1} S_{\tau}^*)_{\tau \in I} \text{ is $\L^p$-bounded} \Big\}$.
                \end{enumerate}
	\end{proposition}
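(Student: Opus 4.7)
The plan is to prove the two inequalities $\Meyers(L_\tau) \leq \cR(L_\tau)$ and $\Meyers(L_\tau) \geq \cR(L_\tau)$, where $\cR(L_\tau) \coloneqq \sup\{p \geq 2 : T \text{ is } \L^p\text{-bounded}\}$ with $T \coloneqq S_\tau L_\tau^{-1} S_\tau^*$. Both halves can be run so that every implicit constant depends only on dimension and ellipticity; tracking the Hodge bound uniformly in $\tau$ then yields part~(ii) from the same argument, since the $\tau$-adapted spaces $\W^{\pm 1,p}_\tau$ are designed to make Sobolev and Poincar\'e constants independent of $\tau$.

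For $\Meyers(L_\tau) \leq \cR(L_\tau)$ I would apply Shen's $\L^p$-extrapolation theorem to the $\L^2$-bounded operator $T$. Fix $p < \Meyers(L_\tau)$. For any ball $B$ and $f \in \L^2 \cap \L^\infty$ with $\mathrm{supp}(f) \subseteq (2B)^c$, the function $u \coloneqq L_\tau^{-1} S_\tau^* f$ satisfies $L_\tau u = 0$ weakly in $2B$, because $S_\tau^* f$ vanishes there. Hence the Meyers inequality applied to $u$ is literally Shen's local reverse H\"older hypothesis
\begin{equation*}
\bigg( \barint_B |Tf|^p \, \d x \bigg)^{1/p} \leq C \bigg( \barint_{2B} |Tf|^2 \, \d x \bigg)^{1/2}.
\end{equation*}
Shen's theorem then produces $\L^q$-boundedness of $T$ for every $2 < q < p$, so $\cR(L_\tau) \geq p$, and letting $p \uparrow \Meyers(L_\tau)$ closes this half.

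For $\Meyers(L_\tau) \geq \cR(L_\tau)$ I would fix $p < \cR(L_\tau)$ and use Proposition~\ref{Inhom Hodge theory: Prop: Hodge-range via Hodge} to rewrite the Hodge bound as $\|L_\tau^{-1}\|_{\W^{-1,p}_\tau \to \W^{1,p}_\tau} < \infty$. Given $u$ $L_\tau$-harmonic in $2Q$, pick $\chi \in \smooth[\tfrac{3}{2} Q]$ with $\chi \equiv 1$ on $Q$ and $|\nabla \chi| \lesssim 1/r$, set $c_0 \coloneqq (u)_{2Q}$, and consider $v \coloneqq \chi(u - c_0) \in \W^{1,2}_\tau$. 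A direct computation using $L_\tau u = 0$ in $2Q$ cancels the bulk of $L_\tau v$ and leaves only commutator-type terms supported on $\mathrm{supp}(\nabla\chi) \subseteq \tfrac{3}{2} Q \setminus Q$, schematically of the form $\nabla \chi \cdot (A S_\tau u)$, $\Div(A [0, (u-c_0)\nabla \chi]^\top)$ and $\tau A [0, (u-c_0) \nabla \chi]^\top$. The Sobolev embedding $\L^{p_*} \hookrightarrow \W^{-1,p}_\tau$ (valid for $n \geq 2$ with a dimensional constant), combined with H\"older on $\mathrm{supp}(\nabla \chi)$ and Poincar\'e on $2Q$, yields, at least while $p \leq 2^*$,
\begin{equation*}
\|L_\tau v\|_{\W^{-1,p}_\tau} \lesssim r^{n/p - n/2} \bigg( \int_{2Q} |S_\tau u|^2 \, \d x \bigg)^{1/2}.
\end{equation*}
Applying the resolvent bound controls $\|S_\tau v\|_p$, and since $v = u - c_0$ on $Q$, the defect $S_\tau v - S_\tau u = -\i \tau c_0 \, e_0$ is absorbed using $|\tau c_0| \leq (\barint_{2Q} |S_\tau u|^2)^{1/2}$ (which is automatic for $\tau \neq 0$ and vacuous for $\tau = 0$). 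This produces the Meyers reverse H\"older estimate on $Q$ with exponent $p$.

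The main obstacle is the Sobolev restriction $p \leq 2^* = 2n/(n-2)$ (for $n \geq 3$), which becomes binding when $\cR(L_\tau) > 2^*$. I plan to overcome it by bootstrapping: once Meyers is known up to some exponent $p_0 \in (2, \cR(L_\tau))$, the resulting local $\L^{p_0}$-integrability of $|S_\tau u|$ can be fed back into the $\W^{-1,p}_\tau$-estimate for $L_\tau v$, with $\L^{p_0}$ replacing $\L^2$ on $2Q$, which enlarges the admissible range of $p$; finitely many iterations (or equivalently an appeal to Gehring-type self-improvement of reverse H\"older inequalities) exhaust every $p < \cR(L_\tau)$. The $\tau$-uniform statement (ii) requires no new idea, since all the inequalities above have constants that depend only on $n$, $\lambda$, and $\|A\|_\infty$.
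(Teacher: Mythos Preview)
Your proposal is correct and follows essentially the same route as the paper's proof: Shen's extrapolation for $\Meyers \leq \cR$, and the cutoff $v=\chi(u-c_0)$ combined with Proposition~\ref{Inhom Hodge theory: Prop: Hodge-range via Hodge} and the Sobolev embedding $\L^{p_*}\hookrightarrow\W^{-1,p}_\tau$ for $\cR\leq\Meyers$. The only notable difference is in the bootstrap for $p>2^*$: the paper obtains directly the sharper intermediate inequality
\[
\bigg(\fint_Q |S_\tau u|^p\bigg)^{1/p}\lesssim\bigg(\fint_{\gamma Q}|S_\tau u|^{p_*}\bigg)^{1/p_*}
\]
and iterates it (with $\gamma$ chosen so that the cubes stay inside $2Q$ after finitely many steps), whereas you plan to feed an already-established Meyers bound at a lower exponent back into the $\W^{-1,p}_\tau$-estimate; both work, but the paper's version avoids having to renegotiate the geometry at each stage. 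Also be aware that $L_\tau v$ contains, beyond the commutator terms you list, terms of the form $\tau^2 a c_0\chi$ and $\i\tau c_0\,\Div_x(c\chi)$ supported on all of $\operatorname{supp}\chi$; these are harmless (they are absorbed by the same $|\tau c_0|\leq(\fint|S_\tau u|^2)^{1/2}$ bound you already invoke), but should not be forgotten.
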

	
	\begin{proof}
		We provide a proof for (i). Our argument will automatically give the additional uniformity of implicit constants that is required in (ii).
		
		\emph{Step 1: Meyers controls Hodge.} Suppose that $p>2$ is such that $S_\tau L_\tau^{-1} S_\tau^*$ is $\L^p$-bounded. Fix $Q = Q(x, r) \sub \R^n$ and an $L_\tau$-harmonic $u$ in $2Q$. We establish a weak reverse H\"older estimate for $S_\tau u$.
        
        Let $\gamma \in (1, 2)$ be a number that will be fixed later on and $\varphi \in \rC_{\cc}^{\infty}$ be such that $\1_Q \leq \varphi \leq \1_{\gamma Q}$ and $\| \nabla_x \varphi \|_{\infty} \les r^{-1}$ with implicit constant depending on $\gamma$ and dimension. Then $v \coloneqq \varphi (u - \kappa) \in \W^{1,2}$ with $\kappa \coloneqq (u)_{\gamma Q}$ satisfies a global equation 
		\begin{equation*}
			L_{\tau} v  = f - \Div_x (F), 
		\end{equation*}
		on $\R^n$, where 
		\begin{align*}
			f &\coloneqq \Big(- d \nabla_x u \cdot \nabla_x \varphi - \i \tau \kappa c \cdot \nabla_x \varphi \Big) - \tau \Big(\i (u - \kappa) (b + c^T) \nabla_x \varphi + \tau \kappa a \varphi\Big) \\
                &\eqqcolon f_1 - \tau f_2, \\
			F &\coloneqq (u - \kappa) d \nabla_x \varphi - \i \tau \kappa \varphi c.
		\end{align*}
		At this point, we need to start tracking the dependence on $\tau$ carefully. For all $g \in \L^p$, we have $\|\tau g\|_{\W_\tau^{-1,p}} \leq \|g\|_p$. The assumptions $n \geq 2$ and $p>2$ imply $p_* > 1$ and consequently, by the standard Sobolev embedding, we have $\|g\|_{\W_\tau^{-1,p}} \les \|g\|_{p_*}$ for all $g \in \L^{p_*}$. Using the properties of $\varphi$ and then the $\L^p$-boundedness of the Hodge projector through its equivalent formulation in Proposition~\ref{Inhom Hodge theory: Prop: Hodge-range via Hodge}, we obtain
		\begin{align*}
                \|S_\tau u\|_{\L^p(Q)} & \leq |\tau \kappa| |Q|^{\frac{1}{p}} + \|v\|_{{\W^{1, p}_\tau}}
			\\ &\les |\tau \kappa| |Q|^{\frac{1}{p}} + \| f- \Div_x F \|_{\W^{-1,p}_{\tau}}
			\\&\les  |\tau \kappa| |Q|^{\frac{1}{p}} + \| f_1 \|_{p_*} + \| f_2 \|_p  + \|  F  \|_p 
			\\&\les |\tau \kappa| |Q|^{\frac{1}{p}} + r^{-1} \| u - \kappa \|_{\L^p(\gamma Q)} + r^{-1} \| \nabla_x u \|_{\L^{p_*}(\gamma Q)}.
            \end{align*}
		For the first term, we use H\"older's inequality 
            \begin{align*}
            |\tau \kappa| \leq |\gamma Q|^{-\frac{1}{p} - \frac{1}{n}} \|\tau u\|_{\L^{p_*}(\gamma Q)},
            \end{align*}
            whereas the second term can be controlled by the third one via the Sobolev--Poincar\'{e} inequality. In total, we have shown that
            \begin{align*}
			\| S_\tau u \|_{\L^p(Q)} \les r^{-1} \|S_\tau u\|_{\L^{p_*}(\gamma Q)},
		\end{align*}
		which, after dividing both sides by $|Q|^{\nicefrac{1}{p}}$, becomes 
		\begin{equation}
            \label{M(L_t) = P_+(L_t): eq1: M(L_t) = P_+(L_t)}
			\bigg( \fint_Q |S_\tau u |^p \, \d x \bigg)^{\frac{1}{p}} \les 	\bigg( \fint_{\gamma Q} |S_\tau u|^{p_*} \, \d x \bigg)^{\frac{1}{p_*}}. 
		\end{equation}
        
		If $p_* \leq 2$, then the right-hand side is bounded by the $\L^2$-average and we are done. Else, we have $2 < p_* < p$ and the previous argument re-applies to the right-hand side of \eqref{M(L_t) = P_+(L_t): eq1: M(L_t) = P_+(L_t)} with $p_*$ in place of $p$, leading to a new bound by the $(p_*)_*$-average of $S_\tau u$ on $\gamma^2 Q$ as long as $\gamma^2 <2$. After a finite number of iterations, say $N$, this procedure yields an $\L^2$-average on the right. We then choose $\gamma$ a priori such that $\gamma^N < 2$.

            \emph{Step 2: Hodge controls Meyers.} Let $2 < p < \Meyers(L_{\tau})$ and let $C$ be an $\L^p$ weak reverse H\"older constant for $L_\tau$. It suffices to prove that $S_{\tau} L_{\tau}^{-1} S_{\tau}^*$ is $\L^q$-bounded for all $q \in (2, p)$.
            
            From Section~\ref{Sec: Inhom Hodge theory} we know that $S_\tau L_\tau^{-1} S_\tau^*$ is $\L^2$-bounded with norm at most $\lambda^{-1}$. Moreover, if
            $Q \sub \R^n$ is an axes-parallel cube and $f = [f_{\perp}, f_{\parallel}]^{\top} \in \rC^{\infty}_{\cc}$ is such that $f|_{4 Q} = 0$, then $u_{\tau} \coloneqq L_{\tau}^{-1} S_{\tau}^* f$ satisfies $L_{\tau} u_{\tau} = S_{\tau}^* f = 0$ in $2 Q$. Hence, we have
		\begin{equation*}
			\left( \fint_Q |S_{\tau} u_{\tau}|^p \, \d x \right)^{\frac{1}{p}} \leq C \left( \fint_{2 Q} |S_{\tau} u_{\tau}|^2 \, \d x \right)^{\frac{1}{2}}
		\end{equation*} 
		by assumption. We have verified the assumptions for Shen's extrapolation theorem~\cite[Thm.~3.1]{Shen_Lp-extrapolation}, which in turn yields that $S_{\tau} L_{\tau}^{-1} S_{\tau}^*$ is $\L^q$-bounded for all $q \in (2, p)$ with a bound depending on $C$, $\lambda$, $n$ and $q$, see also \cite[Thm.~4.1]{Tolksdorf_R-sectoriality}. 
	\end{proof}
	
	   As usual, the $\tau$-dependent theory with $\tau = 0$ yields an analogous conclusion for the divergence-form operator $\cL$ in dimension $n+1$.
	
	\begin{corollary} \label{M(L_t) = P_+(L_t): Cor: M(cL) = P_+(cL)}
		The Meyers exponent for $\cL$ is given by
        \begin{align*}
            \Meyers(\cL) = \sup \Big\{p \geq 2: \nabla_{t,x} \cL^{-1} \Div_{t,x} \text{ is $\L^p$-bounded} \Big\}.
        \end{align*}
	\end{corollary}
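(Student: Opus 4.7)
The plan is to obtain the corollary as a direct specialization of Proposition~\ref{M(L_t) = P_+(L_t): Prop: M(L_t) = P_+(L_t)}(i), by exploiting the observation already highlighted at the start of Section~\ref{Sec: Inhom Hodge theory}: the operator $\cL$ has exactly the same algebraic structure as $L_0$, only that its ambient Euclidean space is $\R^{n+1}$ rather than $\R^n$. Concretely, taking $\tau=0$ gives $S_0 = \nabla$ and $S_0^\ast = -\Div$, and identifying the role of ``$x$'' with $(t,x)$ yields the formal equality $L_0 = S_0^\ast A S_0 = -\Div_{t,x}(A\nabla_{t,x}\,\cdot\,) = \cL$.

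Under this identification I would first check that the two notions of $L_0$-harmonicity and $\cL$-harmonicity coincide on cubes in $\R^{1+n}$, which is immediate from the definitions since $S_0 u = \nabla_{t,x} u$. In particular, the weak reverse H\"older estimates defining $\Meyers(L_0)$ in ambient dimension $n+1$ are verbatim those defining $\Meyers(\cL)$, so $\Meyers(\cL) = \Meyers(L_0)$ in that ambient dimension. Next, I would observe that the associated Hodge projector in ambient dimension $n+1$ is $S_0 L_0^{-1} S_0^\ast = -\nabla_{t,x}\,\cL^{-1}\Div_{t,x}$, whose $\L^p$-boundedness is manifestly equivalent to that of $\nabla_{t,x}\,\cL^{-1}\Div_{t,x}$ since the two operators differ only by a sign.

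With these identifications in place, the corollary follows by applying Proposition~\ref{M(L_t) = P_+(L_t): Prop: M(L_t) = P_+(L_t)}(i) in ambient dimension $n+1$. The only hypothesis that requires a moment's attention is the dimensional restriction: the proposition needs the ambient dimension to be at least $2$, which here reads $n+1 \geq 2$ and hence holds for all boundary dimensions $n\geq 1$ considered in the paper.

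I do not anticipate any obstacle: all substantive work, including the Shen-type extrapolation and the Sobolev--Poincar\'e estimates driving the reverse H\"older bootstrap, was carried out in the proof of Proposition~\ref{M(L_t) = P_+(L_t): Prop: M(L_t) = P_+(L_t)}. What remains here is merely a bookkeeping step that translates between the ``operator in $\R^n$'' perspective used for the family $(L_\tau)_{\tau\in\R}$ and the ``operator in $\R^{1+n}$'' perspective used for $\cL$.
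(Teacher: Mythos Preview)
Your proposal is correct and takes essentially the same approach as the paper: the paper simply remarks that the $\tau$-dependent theory with $\tau=0$ yields an analogous conclusion for $\cL$ in dimension $n+1$, which is exactly the specialization of Proposition~\ref{M(L_t) = P_+(L_t): Prop: M(L_t) = P_+(L_t)}(i) you describe, including the observation that the dimensional hypothesis becomes $n+1\geq 2$.
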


            The supremum on the right-hand side in Corollary~\ref{M(L_t) = P_+(L_t): Cor: M(cL) = P_+(cL)} was studied extensively in \cite{Auscher-Egert_book}. Multiple characterizations are known and the one that turns out particularly useful for our purpose is as follows. 

        \begin{corollary} \label{M(L_t) = P_+(L_t): Cor: M(cL) = Stuff from AE}
		The Meyers exponent for $\cL$ is also given by
        \begin{align*}
            \Meyers(\cL) 
        = \sup \Big\{p \geq 2: \text{there is $N \in \N$ such that } (s \nabla_{t,x} (1+s^2 \cL)^{-N})_{s>0} \text{ is $\L^p$-bounded} \Big\}.
        \end{align*}
	\end{corollary}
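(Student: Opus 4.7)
By Corollary~\ref{M(L_t) = P_+(L_t): Cor: M(cL) = P_+(cL)}, the task reduces to showing that, for each $p \in (1,\infty)$, the Hodge projector $\nabla_{t,x}\cL^{-1}\Div_{t,x}$ is $\L^p$-bounded if and only if the resolvent family $(s\nabla_{t,x}(1+s^2\cL)^{-N})_{s>0}$ is uniformly $\L^p$-bounded for some $N \in \N$. This is an instance of the general equivalence of critical exponents among $\L^p$-families associated with a divergence-form operator, developed systematically in \cite{Auscher-Egert_book}, and the plan is to cite the relevant results from there.

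Two ingredients enter. First, the resolvent family admits $\L^2$ off-diagonal estimates for every $N \geq 1$ (with decay improving with $N$) via Caccioppoli-type arguments; together with the interpolation and extrapolation principles of Section~\ref{Sec: Lp-Lq estimates}, this forces the set of exponents $p$ for which uniform $\L^p$-boundedness holds to be open, and likewise for the Hodge projector. Second, the two families are linked through the Riesz transform $\nabla_{t,x}\cL^{-1/2}$: one writes
$$ s\nabla_{t,x}(1+s^2\cL)^{-N} = \bigl(\nabla_{t,x}\cL^{-1/2}\bigr)\cdot\bigl(s\cL^{1/2}(1+s^2\cL)^{-N}\bigr), $$
where the second factor is uniformly $\L^p$-bounded by the holomorphic functional calculus of $\cL$, while the Hodge projector is essentially the composition of $\nabla_{t,x}\cL^{-1/2}$ with its counterpart for the adjoint operator. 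A Calder\'on-type reproducing formula handles the reverse direction, recovering the Hodge projector as an integral of members of the resolvent family, with convergence justified by the off-diagonal decay.

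The main obstacle is not conceptual but bibliographic: one must thread together the precise chain of results in \cite{Auscher-Egert_book}, distinguishing the Riesz transform range, the Hodge projector range, and the resolvent family range, which all coincide as open intervals around $p = 2$. Once these identifications are extracted, the corollary is immediate from Corollary~\ref{M(L_t) = P_+(L_t): Cor: M(cL) = P_+(cL)}.
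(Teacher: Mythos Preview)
Your approach is essentially the same as the paper's: invoke Corollary~\ref{M(L_t) = P_+(L_t): Cor: M(cL) = P_+(cL)} and then defer to \cite{Auscher-Egert_book} for the equivalence of the Hodge and resolvent ranges. The paper is simply more specific, citing \cite[Thm.~13.12]{Auscher-Egert_book} (the upper endpoint of the Hodge range equals that of the $N=1$ family) and \cite[Lem.~6.5]{Auscher-Egert_book} (the choice of $N$ is immaterial), with no further elaboration.

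One caution about your explanatory sketch: routing the argument through the Riesz transform $\nabla_{t,x}\cL^{-1/2}$ is not how \cite{Auscher-Egert_book} proves this equivalence, and as written it is circular. Asserting that $s\cL^{1/2}(1+s^2\cL)^{-N}$ is uniformly $\L^p$-bounded ``by the holomorphic functional calculus'' presupposes that $\cL$ has a bounded $H^\infty$-calculus on $\L^p$, which is itself one of the equivalent conditions in the chain and is derived in the book \emph{from} the $\L^p$-boundedness of resolvent-type families. Likewise, writing the Hodge projector as a composition of the Riesz transform for $\cL$ with that for $\cL^*$ only gives one implication, and the coincidence of the Riesz and Hodge ranges at the upper endpoint is exactly the content of \cite[Thm.~13.12]{Auscher-Egert_book}, not a step towards it. Since your actual plan is to cite the book, this does not affect correctness, but the sketch should not be mistaken for an independent argument.
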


            This is a combination of two results: By \cite[Thm.~13.12]{Auscher-Egert_book} one can always take $N=1$ and by \cite[Lem.~6.5]{Auscher-Egert_book} the choice of $N$ does not matter.
        
	
	\section{From $1+n$ to $n$ dimensions} \label{Sec: From cL to L_tau}

        In this section, we carry out the easiest step in Figure~\ref{fig: Roadmap} by showing that a Meyers estimate for $\cL$ implies a Meyers estimate for $L_\tau$ in one dimension lower, uniformly in $\tau$.
    
	\begin{proposition} \label{M(cL) < M(L_t): Prop: M(cL) < M(L_t)}
		The inequality $\Meyers(\cL) \leq \Meyers((L_\tau)_{\tau \in \R})$ holds true.
	\end{proposition}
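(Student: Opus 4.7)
The plan is to exploit the lifting observation from the introduction: if $u$ solves $L_\tau u = 0$ weakly on a set $V \subseteq \R^n$, then $U(t,x) \coloneqq \e^{\i \tau t} u(x)$ solves $\cL U = 0$ weakly on $\R \times V$. A Meyers reverse H\"older estimate for $\nabla_{t,x} U$ on a suitable cube in $\R^{1+n}$ then transfers, via Fubini, to the desired estimate for $S_\tau u$ on a cube in $\R^n$, with a constant that does not depend on $\tau$.

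In more detail, fix $p \in [2, \Meyers(\cL))$ with associated Meyers constant $C$. Let $\tau \in \R$, let $Q = Q(x_0, r) \subseteq \R^n$, and let $u$ be $L_{\tau}$-harmonic in $2Q$. Set $U(t,x) \coloneqq \e^{\i \tau t} u(x)$. A direct calculation using the block decomposition~\eqref{eq: Splitting of A} yields
\begin{equation*}
    \nabla_{t,x} U(t,x) = \e^{\i \tau t} S_\tau u(x), \qquad \cL U(t,x) = \e^{\i \tau t} L_\tau u(x),
\end{equation*}
the second identity being understood distributionally. It follows from~\eqref{eq: Formula L-tau} together with the $t$-independence of the coefficients, and can be verified by testing $\cL U$ against $\varphi \in \smooth[\R \times 2Q]$ and integrating by parts in $t$ to recognise $L_\tau$ acting in the $x$-variable. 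Consequently, $U \in \W^{1,2}_{\loc}(\R \times 2Q)$ is $\cL$-harmonic in $\R \times 2Q$, and $|\nabla_{t,x} U(t,x)| = |S_\tau u(x)|$ pointwise.

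Now define $\tilde Q \coloneqq Q((0, x_0), r) \subseteq \R^{1+n}$, so that $2 \tilde Q = (-2r, 2r) \times 2Q \subseteq \R \times 2Q$. The Meyers estimate for $\cL$ applies to $U$ on $\tilde Q$, and since $|\nabla_{t,x} U|$ is independent of $t$, Fubini collapses both averages to averages in $x$:
\begin{equation*}
    \barint_{\tilde Q} |\nabla_{t,x} U|^q \, \d(t,x) = \barint_Q |S_\tau u|^q \, \d x \qquad \text{and} \qquad \barint_{2 \tilde Q} |\nabla_{t,x} U|^q \, \d(t,x) = \barint_{2 Q} |S_\tau u|^q \, \d x
\end{equation*}
for any $q \geq 1$. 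Combining this with the Meyers estimate gives
\begin{equation*}
    \bigg( \barint_Q |S_\tau u|^p \, \d x \bigg)^{\frac{1}{p}} \leq C \bigg( \barint_{2Q} |S_\tau u|^2 \, \d x \bigg)^{\frac{1}{2}}.
\end{equation*}
Since $C$ depends only on $p$ and on the data of $\cL$, the bound is uniform in $\tau \in \R$ and in $Q$, so $\Meyers((L_\tau)_{\tau \in \R}) \geq p$; letting $p \uparrow \Meyers(\cL)$ yields the claim.

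The only substantive point is the distributional identity $\cL U = \e^{\i \tau t} L_\tau u$, and this is essentially immediate from the $t$-independence of $A$ and the explicit expression~\eqref{eq: Formula L-tau} for $L_\tau$. No genuine obstacle is anticipated; the argument is in fact essentially a bookkeeping exercise, which is why this is labelled the \enquote{easy} step in Figure~\ref{fig: Roadmap}.
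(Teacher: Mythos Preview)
Your proof is correct and follows essentially the same approach as the paper: both lift an $L_\tau$-harmonic $u$ to the $\cL$-harmonic $U(t,x) = \e^{\i \tau t} u(x)$, apply the Meyers estimate for $\cL$ on a cube in $\R^{1+n}$, and use $|\nabla_{t,x} U| = |S_\tau u|$ to collapse the $t$-integrals. The only cosmetic difference is the choice of cube in $\R^{1+n}$ (you use $(-r,r) \times Q$, the paper uses $(r,3r) \times Q$), which is immaterial since $U$ is $\cL$-harmonic on all of $\R \times 2Q$.
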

	
	\begin{proof}
		Let $2 < p < \Meyers(\cL)$. Given $\tau \in \R$, $Q = Q(x, r) \sub \R^n$ and any $L_\tau$-harmonic $u$ in $2Q$, we need to show 
            \begin{align} \label{M(cL) < M(L_t): eq1}
			\left( \fint_Q |S_\tau u|^p \, \d x \right)^{\frac{1}{p}} \leq C 	\left( \fint_{2 Q} |S_\tau u|^2 \, \d x \right)^{\frac{1}{2}}.
		\end{align} 
        Splitting the coefficients of $\cL$ as in \eqref{eq: Splitting of A}, we find
        \begin{align*}
            \cL U = - \Div_x (d \nabla_x U) - \Div_x(c \partial_t U) - \partial_t (b \nabla_x U) - \partial_t (a \partial_t U).
        \end{align*}
        Comparing with \eqref{eq: Formula L-tau}, we see that $U(t, x) \coloneqq \e^{\i t \tau} u(x)$ defines an $\cL$-harmonic function in $(0, 4 r) \times 2 Q \subseteq \R^{1+n}$. The assumption yields some $C > 0$ not depending on $\tau$ such that 
		\begin{equation*}
			\left( \fint_r^{3r} \fint_Q |\nabla_{t,x} U|^p \, \d x \, \d t \right)^{\frac{1}{p}} \leq C \left( \fint_0^{4r} \fint_{2 Q} |\nabla_{t,x} U|^2 \, \d x \, \d t \right)^{\frac{1}{2}},
		\end{equation*} 
        but as $|\nabla_{t,x} U| = |S_\tau u|$ is independent of $t$, this estimate collapses to \eqref{M(cL) < M(L_t): eq1}.
	\end{proof}


	\section{From $n$ to $1+n$ dimensions} \label{Sec: From L_tau to cL}
	
	In this section, we prove the following reverse inequality to Proposition~\ref{M(cL) < M(L_t): Prop: M(cL) < M(L_t)} in dimension $n \geq 2$.
    
    \begin{proposition} \label{M(L_t) < M(cL): Prop: Goal}
        In dimension $n\geq 2$, the inequality $\Meyers(\cL) \geq \Meyers((L_\tau)_{\tau \in \R^*})$ holds true.
    \end{proposition}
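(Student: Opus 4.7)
The plan is to combine Corollary~\ref{M(L_t) = P_+(L_t): Cor: M(cL) = Stuff from AE} with a partial Fourier transform in the $t$-variable and Weis' operator-valued Fourier multiplier theorem. Fix $p \in (2,\Meyers((L_\tau)_{\tau\in\R^*}))$. By Proposition~\ref{M(L_t) = P_+(L_t): Prop: M(L_t) = P_+(L_t)}(ii), the Hodge family $(S_\tau L_\tau^{-1} S_\tau^*)_{\tau \in \R^*}$ is $\L^p(\R^n)$-bounded uniformly in $\tau$, and by Corollary~\ref{M(L_t) = P_+(L_t): Cor: M(cL) = Stuff from AE} it suffices to exhibit some $N\in\N$ for which the family $(s \nabla_{t,x}(1+s^2\cL)^{-N})_{s>0}$ is $\L^p(\R^{1+n})$-bounded.

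The key step is the Fourier identity
\begin{equation*}
	s\nabla_{t,x}(1+s^2\cL)^{-N} = \cF_t^{-1}\, m_s(\tau)\, \cF_t,
	\qquad m_s(\tau) \coloneqq s\, S_\tau(1+s^2 L_\tau)^{-N},
\end{equation*}
which follows on $\L^2$ from the $t$-independence of the coefficients together with the correspondences $\cF_t\, \nabla_{t,x}\, \cF_t^{-1} = S_\tau$ and $\cF_t\, \cL\, \cF_t^{-1} = L_\tau$. Under the canonical identification $\L^p(\R^{1+n}) \cong \L^p(\R;\L^p(\R^n))$, the desired $\L^p(\R^{1+n})$-bound becomes a Fourier multiplier bound on $\R$ with values in $\cL(\L^p(\R^n))$, uniform in $s>0$. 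Since $\L^p(\R^n)$ is UMD, Weis' theorem~\cite{Weis_Lp-multipliers} reduces this to the $\cR$-Mihlin condition that $\{m_s(\tau):\tau\in\R^*\}$ and $\{\tau\, m_s'(\tau):\tau\in\R^*\}$ are $\cR$-bounded in $\cL(\L^p(\R^n))$ with $\cR$-bounds independent of $s>0$.

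To verify this $\cR$-Mihlin condition, we run the resolvent machinery of \cite[Thm.~13.12 and Lem.~6.5]{Auscher-Egert_book} uniformly in $\tau$. For each fixed $\tau$, that machinery shows that $\L^p$-boundedness of the Hodge projector $S_\tau L_\tau^{-1} S_\tau^*$ forces $\L^p$-boundedness of the entire resolvent family $(s\, S_\tau(1+s^2 L_\tau)^{-N})_{s>0}$ for every $N\in\N$, via $\L^2$ off-diagonal estimates bootstrapped through Lemmas~\ref{Lp-Lq estimates: Lem: Lp -> Lq + L2 ODE implies Lr -> Ls ODE} and~\ref{Lp-Lq estimates: Triangle Argument}. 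Working consistently in the $\tau$-adapted Sobolev scale $\W^{1,p}_\tau(\R^n)$ and exploiting the parabolic-type scaling $\delta_\tau$ as in the proof of Proposition~\ref{Inhom Hodge theory: Prop: Hodge-range via Hodge} allows one to keep the constants uniform in $\tau$. The $\tau$-derivative is handled by resolvent calculus: formula~\eqref{eq: Formula L-tau} expresses $\partial_\tau L_\tau$ as a first-order differential operator of size $|\tau|$ plus a zeroth-order term of size $\tau^2$, both of which are absorbed into the decay of $(1+s^2 L_\tau)^{-N}$ once $N$ is large, the weight $\tau$ in the Mihlin condition exactly compensating the homogeneity of $\partial_\tau L_\tau$. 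The final upgrade from plain boundedness to $\cR$-boundedness rests on Kunstmann--Weis square function estimates~\cite{Kunstmann-Weis_book}, which will be packaged into the auxiliary Proposition~\ref{M(L_t) < M(cL): Prop: SFE for m_s(tau)}.

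The main obstacle is the uniformity in $\tau$. Because the lower-order coefficients in $L_\tau$ grow with $|\tau|$, off-diagonal estimates for $(1+s^2 L_\tau)^{-1}$ degrade as $|\tau|\to\infty$ unless the arguments are carried out in the $\tau$-adapted spaces and $N$ is chosen large enough to absorb the resulting polynomial factors. Once this bookkeeping is executed, Weis' theorem delivers the required $\L^p(\R^{1+n})$-bound and yields $p \leq \Meyers(\cL)$, completing the proof.
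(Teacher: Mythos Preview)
Your strategy is exactly the paper's: pass to the partial Fourier transform, write $s\nabla_{t,x}(1+s^2\cL)^{-N}$ as an operator-valued multiplier with symbol $m_{s,N}(\tau)=sS_\tau(1+s^2L_\tau)^{-N}$, and apply Weis' theorem after verifying the $\cR$-Mihlin condition via Kunstmann--Weis square function techniques. The reduction to Proposition~\ref{M(L_t) < M(cL): Prop: SFE for m_s(tau)} is correct, and your heuristics for the $\tau$-derivative (resolvent identity, $\partial_\tau L_\tau$ is first order of size $|\tau|$ plus zeroth order of size $\tau^2$, compensated by the Mihlin weight) match the paper's computations.

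There is, however, a genuine gap in how you pass from plain $\L^p$-boundedness to $\cR$-boundedness. You only invoke the $\L^p$-bound of the Hodge family for the single exponent $p>2$ at hand, and then assert that Kunstmann--Weis ``upgrades'' this to square function estimates. That upgrade does not come for free: the Kunstmann--Weis/Fefferman--Stein mechanism requires $\L^r$--$\L^q$ off-diagonal estimates for some $r<2<q$ with $p\in(r,q)$, so that $\Avg_{q,s}(m_{s,N}(\tau)f)$ is dominated by $\cM(|f|^r)^{1/r}$ and the vector-valued maximal inequality applies with exponents $\nicefrac{2}{r},\nicefrac{p}{r}>1$. The constraint $r<2$ is not optional; the paper flags it explicitly (``the role of $r$ will become clear only at the very end of the proof, but it cannot be omitted''). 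Your outline never secures such an $r$. The paper obtains it from Proposition~\ref{DB: Prop: I(DB) = Hodge((L_tau)_tau)}, which identifies the set of exponents for which the Hodge family is $\L^p$-bounded with the open interval $I(DB)\ni 2$, guaranteeing some $r\in(2_*,2)$ with the required bound. Once you add this input, the remainder of your sketch aligns with the paper's argument; without it, the $\cR$-boundedness step is unjustified.
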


    \begin{remark} \label{M(L_t) < M(cL): Rem: Goal}
        Since we have $\Meyers((L_\tau)_{\tau \in \R^*}) \geq \Meyers((L_\tau)_{\tau \in \R})$ by definition, combining Propositions~\ref{M(L_t) < M(cL): Prop: Goal} and \ref{M(cL) < M(L_t): Prop: M(cL) < M(L_t)} also reveals that $\Meyers((L_\tau)_{\tau \in \R^*}) = \Meyers((L_\tau)_{\tau \in \R})$.
    \end{remark}

    The proof of this estimate is the centerpiece of our paper and spreads over five subsections, following the strategy outlined in Figure~\ref{fig: Roadmap}.

    \subsection{\boldmath Reduction to a resolvent estimate in $1+n$ dimensions}

        For the rest of the section, we assume $n\geq2$ and use arbitrary exponents $r,q$ such that
    \begin{align}\label{Meyers meets DB: Assumptions p,q}
    \begin{split}
         2_* < r<2<q<\infty \quad \text{and} \quad \text{$(S_\tau L_\tau^{-1} S_\tau^*)_{\tau \in \R^*}$ is $\L^r$- and $\L^q$-bounded}.
    \end{split}
    \end{align}
    Such exponents exist thanks to Proposition~\ref{DB: Prop: I(DB) = Hodge((L_tau)_tau)} and the assumption is open-ended with respect to $r$ and $q$. (By this we mean that the same assumption holds for a smaller $r$ and a larger $q$.) The role of $r$ will become clear only at the very end of the proof, but it cannot be omitted. With these exponents at hand, we formulate a technical result from which Proposition~\ref{M(L_t) < M(cL): Prop: Goal} will follow, using the results from the previous sections.

    \begin{proposition} \label{M(L_t) < M(cL): Prop: 1st reduction}
    Let $q$ be as in \eqref{Meyers meets DB: Assumptions p,q}. For any $p \in (2,q)$ there exists $N \in \N$ such that $(s \nabla_{t,x} (1+s^2 \cL)^{-N})_{s>0}$ is $\L^p$-bounded.
    \end{proposition}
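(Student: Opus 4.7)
My plan is to apply the partial Fourier transform in the $t$-variable. Since $\cL$ has $t$-independent coefficients, identifying $\L^p(\R^{1+n}) \cong \L^p(\R;\L^p(\R^n))$ yields the factorization
\begin{align*}
    s\nabla_{t,x}(1+s^2\cL)^{-N} = \cF_t^{-1} \circ m_s(\cdot) \circ \cF_t, \qquad m_s(\tau) \coloneqq s S_\tau (1+s^2 L_\tau)^{-N}.
\end{align*}
The task then reduces to showing that $\tau \mapsto m_s(\tau)$ is a bounded operator-valued Fourier multiplier on $\L^p(\R;\L^p(\R^n))$, uniformly in $s>0$. Since $\L^p(\R^n)$ is a UMD space, Weis' multiplier theorem~\cite{Weis_Lp-multipliers} reduces this to the $\mathcal{R}$-Mihlin conditions: the families $\{m_s(\tau)\}_{\tau \in \R^*}$ and $\{\tau \partial_\tau m_s(\tau)\}_{\tau \in \R^*}$ should be $\mathcal{R}$-bounded in $\mathcal{L}(\L^p(\R^n))$ with $\mathcal{R}$-bounds independent of $s$.

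For the $\mathcal{R}$-boundedness of $\{m_s(\tau)\}$, I would start from the hypothesis \eqref{Meyers meets DB: Assumptions p,q} which, after interpolation and via the reformulation in Proposition~\ref{Inhom Hodge theory: Prop: Hodge-range via Hodge}, provides the uniform bound $\|L_\tau^{-1} u\|_{\W^{1,p}_\tau} \lesssim \|u\|_{\W^{-1,p}_\tau}$ for all $\tau \in \R^*$. Combining this with the $\L^p$-$\L^q$ off-diagonal machinery of Section~\ref{Sec: Lp-Lq estimates} (in particular Lemmas~\ref{Lp-Lq estimates: Lem: Lp -> Lq + L2 ODE implies Lr -> Ls ODE} and~\ref{Lp-Lq estimates: Triangle Argument}) transfers the known $\L^2$-bounds for the higher-order resolvent and gradient combinations to $\L^p$. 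The two-sided open range $r<2<q$ provides precisely the room needed to run these interpolation arguments. The upgrade from uniform $\L^p$-boundedness to $\mathcal{R}$-boundedness is standard for such resolvent-type families of sectorial operators admitting a bounded $\mathcal{H}^\infty$-calculus and can be extracted from the square-function techniques in \cite{Kunstmann-Weis_book}.

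The main obstacle is the derivative estimate. Direct differentiation gives
\begin{align*}
    \partial_\tau m_s(\tau) = s (\partial_\tau S_\tau)(1+s^2 L_\tau)^{-N} - s^3 S_\tau \sum_{k=0}^{N-1}(1+s^2 L_\tau)^{-k-1}(\partial_\tau L_\tau)(1+s^2 L_\tau)^{-N+k}.
\end{align*}
The key algebraic observation is that $\tau \, \partial_\tau L_\tau = S_\tau^* A' S_\tau$ with the bounded matrix $A' = \begin{bmatrix} 2a & b \\ c & 0 \end{bmatrix}$, so $\tau \partial_\tau L_\tau$ shares the same tensorial structure as $L_\tau$ and is bounded from $\W^{1,p}_\tau$ to $\W^{-1,p}_\tau$ uniformly in $\tau$. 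Each summand of $\tau\partial_\tau m_s(\tau)$ can therefore be regrouped as a composition of two `good' operators of the form $s S_\tau (1+s^2 L_\tau)^{-j}$ and a dual of the same form, separated by the bounded multiplier $A'$, with the three $s$-weights distributed evenly across the gradient factors. Choosing $N$ large enough secures enough resolvent factors to absorb all weights safely while staying inside the range $(2,q)$. Once both $\mathcal{R}$-Mihlin conditions are established uniformly in $s$, Weis' theorem delivers the required bound on $\L^p(\R^{1+n})$.
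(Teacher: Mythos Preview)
Your strategy coincides with the paper's: represent $s\nabla_{t,x}(1+s^2\cL)^{-N}$ as an operator-valued Fourier multiplier, then verify Weis' $\cR$-Mihlin conditions on the symbol via $\L^r$--$\L^q$ off-diagonal estimates and the Kunstmann--Weis maximal-function/Fefferman--Stein machinery. Your identity $\tau\,\partial_\tau L_\tau = S_\tau^* A' S_\tau$ is a cleaner way to organize the derivative than the paper's component-by-component bookkeeping with the families $\leftindex^\alpha R_\beta$, and it leads to the same regrouping of $\tau m_s'(\tau)$ into blocks of the form $[sS_\tau(1+s^2L_\tau)^{-j}sS_\tau^*]\,A'\,[sS_\tau(1+s^2L_\tau)^{-(N-j)}]$.

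Two points, however, need sharpening. First, your sentence ``the upgrade from uniform $\L^p$-boundedness to $\cR$-boundedness is standard for resolvent-type families of sectorial operators admitting a bounded $\mathcal{H}^\infty$-calculus'' is misleading here: the family $\{m_s(\tau)\}_{\tau\in\R^*}$ is indexed by $\tau$, and for each $\tau$ you have a \emph{different} operator $L_\tau$, so this is not a resolvent family of a single sectorial operator and the $\mathcal{H}^\infty$-calculus argument does not apply as stated. What does work---and what the paper carries out explicitly---is to prove $\L^r$--$\L^q$ off-diagonal estimates for the symbol families uniformly in $\tau$ (this is where the open range $r<2<q$ and the choice of large $N$ enter), obtain pointwise domination by $\cM(|f|^r)^{1/r}$, and then invoke Fefferman--Stein; this is indeed in \cite{Kunstmann-Weis_book}, but you should name the mechanism rather than the wrong heuristic. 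Second, in your regrouping the left block with $j=1$ contains the component $s\nabla_x(1+s^2L_\tau)^{-1}s\Div_x$, which is \emph{not} controlled by the generic resolvent/off-diagonal toolkit; the paper singles this term out and rewrites it through the Hodge projector as $\nabla_x L_\tau^{-1}\Div_x - s\nabla_x(1+s^2L_\tau)^{-1}\,\tau L_\tau^{-1}\Div_x\,(s\tau)^{-1}$ to close the estimate. Your outline would need the same detour.
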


    \begin{proof}[\emph{Proof of Proposition~\ref{M(L_t) < M(cL): Prop: Goal}, admitting Proposition~\ref{M(L_t) < M(cL): Prop: 1st reduction}}]
    Let $p < \Meyers((L_\tau)_{\tau \in \R^*})$. By the characterization of $\Meyers((L_\tau)_{\tau \in \R^*})$ in Proposition~\ref{M(L_t) = P_+(L_t): Prop: M(L_t) = P_+(L_t)}, we have \eqref{Meyers meets DB: Assumptions p,q} with an exponent $q>p$ at our disposal. Proposition~\ref{M(L_t) < M(cL): Prop: 1st reduction} and the characterization of $\Meyers(\cL)$ in Corollary~\ref{M(L_t) = P_+(L_t): Cor: M(cL) = Stuff from AE} yield $p \leq \Meyers(\cL)$. Since $p < \Meyers((L_\tau)_{\tau \in \R^*})$ was chosen arbitrarily, the claim follows.
    \end{proof}
    
    \subsection{Reduction to an operator-valued multiplier estimate}

    In the following, we identify $\L^2(\R^{1+n}) \cong \L^2(\R; \L^2(\R^n)) \eqqcolon \L^2(\R; \L^2)$ via Fubini's theorem. Since the coefficients of $\cL$ are $t$-independent,  we  obtain the following correspondence to operator-valued Fourier multipliers.
	
	\begin{lemma} \label{M(L_t) < M(cL): Lem: symbol of (1 + s^2 cL)^-1}
		Let $s > 0$. Then $(1 + s^2 \cL)^{-1}$ is an $\cL(\L^2)$-valued Fourier multiplier with symbol 
		\begin{equation*}
			\tau \mapsto (1 + s^2 L_{\tau})^{-1}, 
		\end{equation*} 
		that is 
		\begin{equation*}
			(1 + s^2 \cL)^{-1} = \cF_t^{-1} (1 + s^2 L_{\tau})^{-1} \cF_t. 
		\end{equation*}
	\end{lemma}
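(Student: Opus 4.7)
The plan is to establish the Fourier multiplier representation by showing that the partial Fourier transform in $t$ intertwines $\cL$ with the family $(L_\tau)_{\tau \in \R}$, which is the natural consequence of the $t$-independence of the coefficients $A$. Concretely, I would fix $F \in \L^2(\R^{1+n})$, set $U := (1+s^2\cL)^{-1} F$, and aim to prove that for a.e.\ $\tau \in \R$ the slice $\widehat U(\tau, \cdot) := (\cF_t U)(\tau, \cdot)$ satisfies $\widehat U(\tau, \cdot) = (1+s^2 L_\tau)^{-1} \widehat F(\tau, \cdot)$ in the sense of the Lax--Milgram isomorphism from Section~\ref{Sec: L2 theory}. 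Since $U \in \dom(\cL_2) \subseteq \Wdot^{1,2}(\R^{1+n}) \cap \L^2(\R^{1+n})$, Plancherel's theorem applied in the $t$-variable yields that $\widehat U \in \L^2(\R; \L^2(\R^n))$ and that $S_\tau \widehat U(\tau,\cdot) \in \L^2(\R^n)$ for a.e.\ $\tau$, so $\widehat U(\tau, \cdot) \in \W^{1,2}_\tau(\R^n)$ for a.e.\ $\tau$ and the right-hand side makes sense.

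The weak formulation $(1+s^2\cL) U = F$ tested against $\Phi(t,x) = \chi(t)\psi(x)$ with $\chi \in \smooth[\R]$ and $\psi \in \smooth[\R^n]$ gives, after applying Plancherel in $t$ and using the factorization $\cF_t(\chi\psi)(\tau,x) = \widehat \chi(\tau)\psi(x)$ together with the symbolic identity $\cF_t(\nabla_{t,x}(\chi\psi))(\tau, x) = \widehat\chi(\tau) S_\tau \psi(x)$,
\begin{equation*}
\int_\R \overline{\widehat\chi(\tau)} \Big[ \langle \widehat U(\tau,\cdot), \psi\rangle + s^2 \langle A S_\tau \widehat U(\tau,\cdot), S_\tau \psi\rangle - \langle \widehat F(\tau,\cdot), \psi\rangle \Big] \, \d \tau = 0.
\end{equation*}
Since $\{\widehat \chi : \chi \in \smooth[\R]\}$ is dense in $\L^2(\R)$, the bracket vanishes for a.e.\ $\tau$, with the null set a priori depending on $\psi$. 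Choosing a countable dense family $\{\psi_k\}_k \subseteq \smooth[\R^n]$ and taking a common null set, we conclude that for a.e.\ $\tau$ the function $\widehat U(\tau, \cdot)$ is the Lax--Milgram solution in $\W^{1,2}_\tau(\R^n)$ to $(1+s^2 L_\tau) u = \widehat F(\tau, \cdot)$, i.e.\ $\widehat U(\tau, \cdot) = (1+s^2 L_\tau)^{-1} \widehat F(\tau, \cdot)$.

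This identity in the Fourier variable then transfers back by another application of Plancherel, yielding the claimed representation $(1+s^2\cL)^{-1} = \cF_t^{-1}(1+s^2 L_\tau)^{-1}\cF_t$ on $\L^2(\R^{1+n})$. The one place where care is needed, and which I expect to be the main technical obstacle, is ensuring that the weak formulation passes cleanly through Plancherel, in particular the $t$-symbol calculus used to translate $\partial_t$ into multiplication by $\i\tau$ within $S_\tau$; this is why it is convenient to restrict test functions to the product form $\chi(t)\psi(x)$, where the symbolic identities are elementary and no distributional subtleties arise.
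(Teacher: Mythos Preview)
Your proposal is correct and follows essentially the same approach as the paper: set $U = (1+s^2\cL)^{-1}F$, pass the weak formulation through Plancherel in the $t$-variable using $t$-independence of the coefficients, and read off $(1+s^2 L_\tau)\widehat U(\tau,\cdot) = \widehat F(\tau,\cdot)$ for a.e.\ $\tau$. The only cosmetic difference is that the paper tests against general Schwartz functions $h \in \cS(\R^{1+n})$ rather than tensor products $\chi(t)\psi(x)$, and is correspondingly terser about the a.e.\ conclusion you spell out via a countable dense family of $\psi$.
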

	
	\begin{proof}
		We let $f \in \L^2(\R^{1 +n})$ and set $g \coloneqq (1 + s^2 \cL)^{-1} f \in \dom(\cL) \in \W^{1,2}(\R^{1+n})$. Then
		\begin{align*}
			f &=  g + s^2 \cL g
			\\&= g + s^2 (- \Div_x (d \nabla_x g) - \Div_x (c \partial_t g) - \partial_t (b \nabla_x g) - a \partial_t^2 g)
		\end{align*}
		in the weak sense. In particular, for all $h \in \cS(\R^{1+n})$, we have
		\begin{align*}
			\langle f, h \rangle &= \langle g, h \rangle + s^2 \Big( \langle d \nabla_x g, \nabla_x h \rangle + \langle c \partial_t g,  \nabla_x h \rangle + \langle b \nabla_x g, \partial_t h \rangle + \langle a \partial_t g, \partial_t h \rangle \Big).
		\end{align*}
		By Plancherel's theorem in the $t$-variable and since $a,b,c,d$ are $t$-independent, we get
		\begin{align*}
			\langle \cF_t f, \cF_t h \rangle &= \langle \cF_t g, \cF_t h \rangle 
			\\&\qquad + s^2 \Big( \langle d \nabla_x \cF_t g, \nabla_x \cF_t h \rangle + \langle \i \tau c \cF_t g, \nabla_x \cF_t h \rangle 
			\\&\qquad \qquad \quad  + \langle b \nabla_x \cF_t g, \i \tau cF_t h \rangle + \langle a \i \tau  \cF_t g, \i \tau \cF_t h \rangle \Big)
			\\&= \langle (1 + s^2 (- \Div_x d \nabla_x - \i \tau \Div_x c - \i \tau b \nabla_x + a \tau^2)) \cF_t g, \cF_t h \rangle.
		\end{align*}
		This calculation implies that $(\cF_t g)(\tau, \cdot) \in \dom(L_\tau)$ for a.e.\ $\tau \in \R$ with 
		\begin{equation*}
			(\cF_t f)(\tau, \cdot) = (1 + s^2 L_{\tau}) (\cF_t g)(\tau, \cdot).
		\end{equation*}
		Hence, 
		\begin{equation*}
			(1 + s^2 \cL)^{-1} f = g = \cF_t^{-1} (1 + s^2 L_{\tau})^{-1} \cF_t f. \qedhere 
		\end{equation*}
	\end{proof}

         In the following, we split vectors $f \in \C^{1+n}$ as $f = [f_\perp, f_\parallel]^\top$ in accordance with our writing for the coefficents $A$ in \eqref{eq: Splitting of A}.

    	\begin{corollary} \label{M(L_t) < M(cL): Cor: symbol of s grad (1 + s^2 cL)^-N}
		Let $s > 0$ and $N \in \N$. Then $s \nabla_{t, x} (1 + s^2 \cL)^{-N}$ is an $\cL(\L^2; (\L^2)^{1+n})$-valued Fourier multiplier with symbol
		\begin{equation} \label{eq: M(L) = M(L_tau): m_s(tau)}
			m_{s,N}(\tau) \coloneqq \begin{bmatrix}
				m_{s,N}(\tau)_\perp \\
				m_{s,N}(\tau)_\parallel
			\end{bmatrix} 
			\coloneqq \begin{bmatrix}
				\i s \tau (1 + s^2 L_{\tau})^{-N} \\ 
				s \nabla_x (1 + s^2 L_{\tau})^{-N}
			\end{bmatrix}.
		\end{equation}
	\end{corollary}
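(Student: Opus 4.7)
The plan is to iterate Lemma~\ref{M(L_t) < M(cL): Lem: symbol of (1 + s^2 cL)^-1} to handle the $N$-th power of the resolvent, and then to commute $\nabla_{t,x}$ through the partial Fourier transform $\cF_t$ to produce the symbol $S_\tau$. Since Lemma~\ref{M(L_t) < M(cL): Lem: symbol of (1 + s^2 cL)^-1} already provides the key identity for a single resolvent, the corollary should follow by bookkeeping, provided one checks that the relevant operators act within compatible domains.

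First, I would argue by induction that
\begin{equation*}
    (1 + s^2 \cL)^{-N} = \cF_t^{-1} (1 + s^2 L_\tau)^{-N} \cF_t
\end{equation*}
as bounded operators on $\L^2(\R^{1+n})$. The base case $N=1$ is Lemma~\ref{M(L_t) < M(cL): Lem: symbol of (1 + s^2 cL)^-1}. For the inductive step, one composes the scalar-valued symbol identity with itself, using that $\cF_t \cF_t^{-1} = \mathrm{Id}$ and that $\tau \mapsto (1 + s^2 L_\tau)^{-N+1}$ is uniformly bounded in $\cL(\L^2)$ by the $m$-accretivity of each $L_\tau$.

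Next, I would handle the gradient. Since $\cL$ is $m$-accretive on $\L^2(\R^{1+n})$, the resolvent $(1+s^2 \cL)^{-N}$ maps $\L^2(\R^{1+n})$ into $\dom(\cL) \subseteq \Wdot^{1,2}(\R^{1+n})$, so $g \coloneqq (1+s^2 \cL)^{-N} f$ satisfies $\nabla_{t,x} g \in \L^2(\R^{1+n})^{1+n}$ and the left-hand side of \eqref{eq: M(L) = M(L_tau): m_s(tau)} is well-defined in $\L^2$. Under Fubini's identification $\L^2(\R^{1+n}) \cong \L^2(\R; \L^2)$, the partial Fourier transform $\cF_t$ acts only in the $t$-variable, so it commutes with $\nabla_x$, while $\cF_t \partial_t = \i \tau \cF_t$ in the distributional sense. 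Together with the first step this gives
\begin{equation*}
    \cF_t \bigl( s \nabla_{t,x} (1+s^2 \cL)^{-N} f \bigr)(\tau, \cdot) = \begin{bmatrix} \i s \tau \\ s\nabla_x \end{bmatrix} (1+s^2 L_\tau)^{-N} (\cF_t f)(\tau, \cdot),
\end{equation*}
which is exactly $m_{s,N}(\tau) \cF_t f$.

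The main point requiring care is the measurability and regularity of the slices: one must verify that $(\cF_t g)(\tau, \cdot) \in \dom(L_\tau) \subseteq \W^{1,2}_\tau(\R^n)$ for a.e.\ $\tau \in \R$ and that the derivative identities pass to the slices. This is precisely the content of the computation at the end of the proof of Lemma~\ref{M(L_t) < M(cL): Lem: symbol of (1 + s^2 cL)^-1}, iterated $N$ times via testing against Schwartz functions and using Plancherel's theorem in the $t$-variable. No new analytical difficulty arises beyond what is already handled there.
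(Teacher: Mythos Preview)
Your proposal is correct and matches the paper's implicit reasoning: the corollary is stated in the paper without proof, as an immediate consequence of Lemma~\ref{M(L_t) < M(cL): Lem: symbol of (1 + s^2 cL)^-1}, and your argument---iterating the resolvent identity and then commuting $\nabla_{t,x}$ through $\cF_t$ via $\cF_t \partial_t = \i\tau \cF_t$ and $[\cF_t,\nabla_x]=0$---is precisely the intended bookkeeping. The only cosmetic point is that $\dom(\cL)$ sits in the inhomogeneous $\W^{1,2}(\R^{1+n})$ rather than $\Wdot^{1,2}(\R^{1+n})$ (compare the paper's proof of the lemma), but since $g\in\L^2$ this makes no difference for the conclusion $\nabla_{t,x}g\in\L^2$.
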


	   Proposition~\ref{M(L_t) < M(cL): Prop: 1st reduction} now asks for boundedness of a Fourier multiplier in $\L^p(\R; (\L^p)^{1+n})$. A sufficient condition is furnished by the celebrated theorem of Weis \cite{Weis_Lp-multipliers}. We phrase his result in terms of square function estimates rather than $\cR$-boundedness, which are equivalent concepts in the Banach space $\L^p$, see~\cite[Thm.~8.1.3~(3)]{Analysis_in_BS_II}.
	
	\begin{definition}
		Let $p \in (1, \infty)$ and $I \subseteq \R^*$. We say that $(T(t))_{t \in I} \subseteq \cL(\L^2)$ satisfies \emph{square function estimates} on $\L^p$, if there is $C \geq 0$ such that 
		\begin{equation*}
			\bigg\| \Big( \sum_{j=1}^k |T(t_j) f_j|^2 \Big)^{\frac{1}{2}} \bigg\|_p \leq C \bigg\| \Big( \sum_{j=1}^k |f_j|^2 \Big)^{\frac{1}{2}} \bigg\|_p
		\end{equation*}
		for all choices $k\in \N$, $t_1, \dots, t_k \in I$ and $f_1, \dots, f_k \in \L^p \cap \L^2$.
	\end{definition}

	\begin{theorem}[{Weis~\cite{Weis_Lp-multipliers}, \cite[Cor.~8.3.11]{Analysis_in_BS_II}}] \label{M(L_t) < M(cL): Thm: Weis_Lp-multiplier}
		Let $p \in (1, \infty)$ and $m \in \rC^1(\R^*; \cL(\L^p))$ be such that $(m(\tau))_{\tau \in \R^*}$ and $( \tau m'(\tau) )_{\tau \in \R^*}$ satisfy square function estimates on $\L^p$ with constant $C$. Then the operator given by
		\begin{equation*}
			T f = \cF_t^{-1} (m \cF_t f) \qquad (f \in \cS(\R^{1+n}))
		\end{equation*}
		has a unique bounded extension to $\L^p(\R; \L^p)$ with a bound depending on $p$ and $C$.
	\end{theorem}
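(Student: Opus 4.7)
My plan is to follow the standard route to operator-valued Mikhlin-type multiplier theorems, combining a smooth dyadic decomposition in the Fourier variable $\tau$ with the vector-valued Littlewood--Paley inequality for the UMD space $\L^p(\R^n)$. The hypothesis is tailored so that the only operator-valued quantities entering the estimates are $m(\tau)$ and $\tau m'(\tau)$, and the conclusion is precisely the bound that makes this reduction work.

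First, I fix $\phi \in \rC_{\cc}^{\infty}(\R)$ supported in $\{\frac{1}{2} \leq |\tau| \leq 2\}$ with $\sum_{j \in \Z} \phi(2^{-j} \tau) = 1$ for all $\tau \neq 0$. Setting $\phi_j(\tau) \coloneqq \phi(2^{-j} \tau)$, $m_j \coloneqq \phi_j m$ and $T_j \coloneqq \cF_t^{-1} m_j \cF_t$, the multiplier splits as $T = \sum_{j \in \Z} T_j$. Since $\L^p(\R^n)$ is UMD for $p \in (1, \infty)$, so is the iterated space $\L^p(\R;\L^p)$, and one has the vector-valued Littlewood--Paley equivalence
\begin{equation*}
    \|g\|_{\L^p(\R;\L^p)} \simeq \Big\|\Big(\sum_{j \in \Z} |\Delta_j g|^2\Big)^{1/2}\Big\|_{\L^p(\R;\L^p)}
\end{equation*}
for the smooth Fourier projections $\Delta_j \coloneqq \cF_t^{-1} \phi_j \cF_t$. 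After inserting a slightly enlarged partition $\tilde{\phi}_j$ so that $T_j = \tilde{\Delta}_j T_j$, it suffices to dominate $\|(\sum_j |T_j f|^2)^{1/2}\|_{\L^p(\R;\L^p)}$ by the analogous square function for $f$.

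The key analytic step is a pointwise representation of the convolution kernel of $T_j$, obtained by integration by parts in the Fourier inversion of $m_j$ together with the identity $\tau \partial_\tau m_j(\tau) = \phi_j(\tau) \cdot \tau m'(\tau) + (\tau \phi_j'(\tau))\, m(\tau)$, which naturally exposes only $m(\tau)$ and $\tau m'(\tau)$ sampled at $\tau \sim 2^j$. This yields an estimate in which convolution with the kernel of $T_j$ is controlled by a scalar convolution of $\tilde{\Delta}_j f$ with $\L^1$-norm uniform in $j$, while the operator-valued content lies in the closed absolutely convex hull of $\{m(\tau), \tau m'(\tau) : \tau \neq 0\}$. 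The square function hypothesis on these two families---equivalent in $\L^p$ to $\cR$-boundedness via Khintchine--Kahane and Fubini---then allows reassembling the dyadic pieces into the required square function bound for $Tf$.

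The main obstacle is the careful bookkeeping in the integration-by-parts step: one must ensure that the only operator-valued symbols that survive are precisely $m$ and $\tau m'$ and that the accompanying scalar kernels enjoy dyadic-scale-invariant $\L^1$ bounds. This uniformity rests on the homogeneity of the dyadic partition $\{\phi_j\}$ and on $\tau \partial_\tau$ respecting the decomposition up to boundary contributions expressible through $m$ itself. Once this is arranged, the UMD-based Littlewood--Paley step closes the argument, producing a bound that depends only on $p$ and on the square function constant $C$ from the hypothesis.
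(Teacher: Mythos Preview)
The paper does not prove this theorem; it is stated with attribution to Weis~\cite{Weis_Lp-multipliers} and to \cite[Cor.~8.3.11]{Analysis_in_BS_II} and is used as a black box in the proof of Proposition~\ref{M(L_t) < M(cL): Prop: 1st reduction}. There is therefore no ``paper's own proof'' to compare against.

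Your sketch is a reasonable outline of the standard argument from the cited references: dyadic Littlewood--Paley decomposition in the UMD space $\L^p(\R;\L^p(\R^n))$, representation of each dyadic piece so that only $m(\tau)$ and $\tau m'(\tau)$ appear as operator-valued ingredients, and passage through the absolutely convex hull via the stability of $\cR$-bounds (equivalently, square function estimates in $\L^p$). One point to make more precise if you flesh this out: the step you describe as ``convolution with the kernel of $T_j$ is controlled by a scalar convolution'' really amounts to writing $T_j$ as a strong integral average of operators from $\{m(\tau),\tau m'(\tau)\}$ against an $\L^1$-normalized scalar kernel, and then invoking that $\cR$-bounds pass to such averages; the ``integration by parts'' is what produces the $\L^1$ decay of that scalar kernel uniformly in $j$. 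With that clarification your sketch matches the textbook proof.
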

    
        We can now further reduce Proposition~\ref{M(L_t) < M(cL): Prop: 1st reduction} to a square function estimate. Note carefully that the symbol in \eqref{eq: M(L) = M(L_tau): m_s(tau)} does not even map into $\L^p$ for general $p$ and already for this we will use properties of the $\tau$-dependent Hodge family. 

  	\begin{proposition} \label{M(L_t) < M(cL): Prop: SFE for m_s(tau)}
		Let $p \in (2,q)$. There exists $N \in \N$ such that for all $s > 0$ the symbol $m_{s,N}$ is of class $\rC^1(\R^*; \cL(\L^p))$, and $(m_{s,N}(\tau))_{\tau \in \R^*}$ and $(\tau m_{s,N}'(\tau))_{\tau \in \R^*}$ satisfy square function estimates on $\L^p$ with bounds independent of $s$. 
	\end{proposition}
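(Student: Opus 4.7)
Since $\L^p$ has non-trivial type and cotype, the square function estimates required in the proposition are equivalent to $\cR$-boundedness of the families $(m_{s,N}(\tau))_{\tau \in \R^*}$ and $(\tau m_{s,N}'(\tau))_{\tau \in \R^*}$, uniformly in $s>0$. My plan is to extract uniform $\L^p$-boundedness from the Hodge family estimate in \eqref{Meyers meets DB: Assumptions p,q} together with $\L^2$-off-diagonal decay of Davies--Gaffney type, and then promote the uniform $\L^p$-bounds to $\cR$-bounds by the standard lattice domination principle for $\L^p$-spaces via the Hardy--Littlewood maximal function. The $\rC^1$-regularity of $\tau \mapsto m_{s,N}(\tau)$ in $\cL(\L^p)$ will follow from the resolvent identity once the $\L^p$-boundedness of the ingredients is established.

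\textbf{Handling $m_{s,N}(\tau)$.} I use the Laplace representation
\begin{align*}
    (1+s^2 L_\tau)^{-N} = \frac{1}{(N-1)!}\int_0^\infty t^{N-1}\e^{-t}\, \e^{-t s^2 L_\tau}\,\d t
\end{align*}
to reduce the bound for $s S_\tau (1+s^2 L_\tau)^{-N}$ to a uniform $\L^p$-bound for $\sqrt{u}\, S_\tau\e^{-u L_\tau}$, $u > 0$; picking $N$ large enough makes the $t$-integral absolutely convergent after any polynomial loss from extrapolation. The family $\sqrt{u}\, S_\tau \e^{-u L_\tau}$ satisfies $\L^2$-$\L^2$ off-diagonal estimates at scale $\sqrt{u}$ by standard Davies--Gaffney arguments on the sesquilinear form of $L_\tau$, with constants uniform in $\tau$ thanks to the $\tau$-independent coercivity built into the ellipticity assumption. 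Combining these off-diagonal bounds with the $\L^r$- and $\L^q$-boundedness of $S_\tau L_\tau^{-1} S_\tau^*$ from \eqref{Meyers meets DB: Assumptions p,q} and the composition/interpolation rules of Lemmas~\ref{Lp-Lq estimates: Lem: Lp -> Lq + L2 ODE implies Lr -> Ls ODE} and~\ref{Lp-Lq estimates: Triangle Argument} yields $\L^p$-boundedness of $\sqrt{u}\, S_\tau \e^{-u L_\tau}$ for all $p \in (r,q)$. Off-diagonal decay then dominates $|m_{s,N}(\tau) f(x)|$ pointwise by $M(|f|)(x)$, uniformly in $s$, $\tau$ and $x$, and the lattice domination principle~\cite{Kunstmann-Weis_book, Bechtel-Ouhabaz_ODEs} upgrades this uniform $\L^p$-bound to the $\cR$-boundedness we want.

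\textbf{Handling $\tau m_{s,N}'(\tau)$.} Using the resolvent identity together with $\partial_\tau L_\tau = -\i \Div_x c - \i b \nabla_x + 2 \tau a$, the derivative decomposes as a finite sum of terms of the schematic form
\begin{align*}
    s S_\tau (1+s^2 L_\tau)^{-j_1} \cdot s^2 \tau(\partial_\tau L_\tau) \cdot (1+s^2 L_\tau)^{-j_2}, \qquad j_1+j_2 = N+1,
\end{align*}
together with the direct contribution $\i s\tau (1+s^2 L_\tau)^{-N}$ from the perpendicular component. The factors $s\tau$ and $s^2 \tau\,\partial_\tau L_\tau$ can each be repackaged as ingredients of another copy of $s S_\tau$ paired with a resolvent, at the cost of raising $N$; hence the same chain of arguments as for $m_{s,N}$ applies and the proof closes.

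\textbf{Main obstacle.} The principal difficulty is ensuring $\tau$-uniformity throughout: the lower-order terms of $L_\tau$ scale like $\tau$ and $\tau^2$, so the Davies--Gaffney estimates, resolvent bounds, and off-diagonal compositions must be tracked carefully to extract constants independent of $\tau$. The coefficient matrix $A(x)$ admits no simple rescaling, so the $\tau$-uniformity cannot be recovered by a dilation; instead it has to be harvested from the Hodge bound in \eqref{Meyers meets DB: Assumptions p,q}, which already encodes the required $\tau$-uniform information through the first-order machinery, and then propagated through the Laplace formula and the differentiation step without loss.
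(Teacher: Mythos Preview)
Your overall strategy matches the paper's: establish $\tau$-uniform off-diagonal estimates for $m_{s,N}(\tau)$ and $\tau m_{s,N}'(\tau)$, dominate by a maximal function, and conclude square-function bounds via Fefferman--Stein. Two concrete points, however, are wrong as stated and would cause the argument to fail if followed literally.

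First, the pointwise domination of $|m_{s,N}(\tau) f|$ by $\cM(|f|)$ cannot hold: that would require $\L^1$--$\L^\infty$ off-diagonal bounds, which are unavailable here. What $\L^r$--$\L^q$ off-diagonal estimates (obtained from $\L^2$ off-diagonal decay and the $\L^r$, $\L^q$ bounds via Lemma~\ref{Lp-Lq estimates: Lem: Lp -> Lq + L2 ODE implies Lr -> Ls ODE}) actually give is domination of the $\L^q$-\emph{average} $\Avg_{q,s}(m_{s,N}(\tau) f)$ by $\cM(|f|^r)^{1/r}$ (Lemma~\ref{M(L_t) < M(cL): Lem: Avg dom by M}); one then needs the \emph{lower} square-function estimate for the averaging family $(\Avg_{q,s})_{s>0}$ from \cite[Prop.~8.13]{Kunstmann-Weis_book} before invoking Fefferman--Stein, which in turn requires $p/r > 1$. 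This is exactly why the auxiliary exponent $r<2$ in \eqref{Meyers meets DB: Assumptions p,q} is introduced; your sketch never uses it. Second, your claim that ``$A(x)$ admits no simple rescaling, so the $\tau$-uniformity cannot be recovered by a dilation'' is the opposite of what works: the conjugation identity \eqref{eq: Formula L-tau rescaled} turns $L_\tau$ into an operator with fixed lower-order coefficients and rescaled leading matrix $A(\tau^{-1}\cdot)$, which retains the same ellipticity constants, so standard theory (Lemma~\ref{M(L_t) < M(cL): Lem: L2 ODE for sg operators}) yields $\tau$-uniform $\L^2$ off-diagonal decay immediately. The Hodge bound from \eqref{Meyers meets DB: Assumptions p,q} is used only at the $\L^p$ stage (Lemmas~\ref{M(L_t) < M(cL): Lem: Lp bound for R_(0,0)} and \ref{M(L_t) < M(cL): Lem: Lp bound for rest}), not for off-diagonal decay. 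As a further gap, your schematic treatment of $\tau m_{s,N}'(\tau)$ glosses over the case $j_1=1$, which produces the family $s\nabla_x(1+s^2 L_\tau)^{-1} s\Div_x$; this is not covered by the auxiliary $\L^q$-bounds and requires the separate algebraic identity at the end of the proof of Lemma~\ref{M(L_t) < M(cL): Lem: Lr-Lq bound for m_s(tau)}.
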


        \begin{proof}[\emph{Proof of Propositions~\ref{M(L_t) < M(cL): Prop: Goal} \& \ref{M(L_t) < M(cL): Prop: 1st reduction}, admitting Proposition~\ref{M(L_t) < M(cL): Prop: SFE for m_s(tau)}}]
        Apply Theorem~\ref{M(L_t) < M(cL): Thm: Weis_Lp-multiplier} to the operators $s \nabla_{t, x} (1 + s^2 \cL)^{-N}$.
        \end{proof}

	\subsection{\boldmath $\L^2$-results for auxiliary operators}
	
	We introduce four auxiliary families of operators that will play a key role in the following. Our notation is easy to remember: the superscript on $R$ refers to the power of $s\tau$ and the subscript indicates the amount of $x$-derivatives left and right of the resolvents. 
	
	\begin{definition}  \label{M(L_t) < M(cL): Def: R families }
		For $s > 0$ and $\tau \in \R^*$, we set
		\begin{align*}
			\leftindex^\alpha{R}_{(0,0)}(s, \tau) &\coloneqq (s \tau)^\alpha (1 + s^2 L_{\tau})^{-1} \qquad ( \alpha \in \{ 0, 1, 2 \}), \\
			\leftindex^\alpha R_{(1, 0)}(s, \tau) &\coloneqq (s \tau)^\alpha s \nabla_x (1 + s^2 L_{\tau})^{-1} \qquad (\alpha \in \{ 0, 1\}), \\
			\leftindex^\alpha R_{(0,1)}(s, \tau) &\coloneqq (s \tau)^\alpha (1 + s^2 L_{\tau})^{-1} s \Div_x \qquad (\alpha \in \{ 0, 1 \}), \\
			\leftindex^\alpha R_{(1, 1)}(s, \tau) &\coloneqq (s \tau)^\alpha s \nabla_x (1 + s^2 L_{\tau})^{-1} s \Div_x \qquad (\alpha = 0). 
		\end{align*}	 
	\end{definition}

        We need $\L^2$ off-diagonal estimates for these families with respect to the parameter $s$ and implicit constants that do not depend on $\tau$. To this end, we rely on scaling. The rescaled coefficients $A(\tau^{-1} \cdot)$ have the same $\L^\infty$-norm and satisfy our ellipticity assumption from Section~\ref{Sec: L2 theory} with the same constant $\lambda$. From \eqref{eq: Formula L-tau} we obtain that the corresponding inhomogeneous divergence form operator is given by 
	\begin{align}\label{eq: Formula L-tau rescaled}
        \begin{split}
		L^{A(\tau^{-1} \cdot)}u 
        &\coloneqq -\Div_x (d(\tau^{-1} \cdot) \nabla_x u) - \i \Div_x(c(\tau^{-1} \cdot)u) - \i b(\tau^{-1} \cdot) \nabla_x u + a(\tau^{-1} \cdot) u
        \\&=\tau^{-2} \delta_{\tau^{-1}} L_{\tau} \delta_{\tau}. 
        \end{split}
	\end{align} 
        The following $\L^2$ off-diagonal estimates for semigroups generated by inhomogeneous operators in divergence form are standard nowadays. For an explicit statement of (i), see \cite[Prop.~3.2]{Bechtel_Lp}. Statement (ii) follows by general duality and composition principles, see \cite[Sec.~4.2]{Auscher-Egert_book}.
	
	\begin{lemma}  \label{M(L_t) < M(cL): Lem: L2 ODE for sg operators} 
		There are $C, c > 0$ depending only on $\lambda$ and $\| A \|_\infty$ such that for all $s > 0$, $\tau \in \R^*$, measurable sets $E, F \sub \R^n$ and $f \in \L^2$ with support in $E$, we have the following off-diagonal bounds:
        \begin{enumerate}
            \item $\displaystyle \| \e^{- s^2 L^{A(\tau \cdot)}} f \|_{\L^2(F)} + \| s \nabla_x \e^{- s^2 L^{A(\tau \cdot)}} f \|_{\L^2(F)} \leq C \e^{- c^2 \frac{\d(E, F)^2}{s^2} - c s^2} \| f \|_{\L^2(E)}$,
            \item $\displaystyle \| \e^{- s^2 L^{A(\tau \cdot)}} s \Div_x f \|_{\L^2(F)} + \| s \nabla_x \e^{- s^2 L^{A(\tau \cdot)}} s \Div_x f \|_{\L^2(F)} \\  
             {}\hfill \leq C \e^{- c^2 \frac{\d(E, F)^2}{s^2} - c s^2} \| f \|_{\L^2(E)}$.
        \end{enumerate}
	\end{lemma}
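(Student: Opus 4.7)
The strategy is a uniform-in-$\tau$ application of Davies' perturbation method. The key point is that the rescaling $A \mapsto A(\tau \cdot)$ preserves both $\|A\|_\infty$ and the ellipticity constant $\lambda$; hence, the associated \emph{inhomogeneous} operator $L^{A(\tau \cdot)} = S_1^* A(\tau \cdot) S_1$ is $\L^2$-coercive, uniformly in $\tau$:
\begin{equation*}
\re \langle L^{A(\tau \cdot)} u, u \rangle = \int_{\R^n} \re\bigl(A(\tau x) S_1 u \cdot \overline{S_1 u}\bigr) \, \d x \geq \lambda\bigl(\|u\|_2^2 + \|\nabla_x u\|_2^2\bigr)
\end{equation*}
for every $u \in \W^{1,2}(\R^n)$. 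In particular, $L^{A(\tau \cdot)} - \lambda$ is m-accretive in $\L^2$, so $\|\e^{-s^2 L^{A(\tau \cdot)}}\|_{\L^2 \to \L^2} \leq \e^{-\lambda s^2}$ by functional calculus. This pure $\L^2$-decay is what will deliver the extra factor $\e^{-cs^2}$ on top of the standard Gaffney-type Gaussian spatial decay.

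For the off-diagonal estimate (i), fix a real-valued, bounded Lipschitz function $\phi$ with $\|\nabla_x \phi\|_\infty \leq 1$ and $\rho \geq 0$, and consider the twisted operator $L_\rho \coloneqq \e^{\rho\phi} L^{A(\tau \cdot)} \e^{-\rho\phi}$. Expanding the sesquilinear form of $L_\rho$, using $\|A\|_\infty$ to bound the cross terms, and then absorbing via Young's inequality, one arrives at
\begin{equation*}
\re \langle L_\rho u, u \rangle \geq \tfrac{\lambda}{2}\|\nabla_x u\|_2^2 + \bigl(\lambda - \kappa \rho - \kappa \rho^2\bigr)\|u\|_2^2,
\end{equation*}
with $\kappa$ depending only on $\lambda$ and $\|A\|_\infty$. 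Setting $u_s \coloneqq \e^{-s^2 L^{A(\tau \cdot)}} f$ and $v_s \coloneqq \e^{\rho\phi} u_s$, the energy identity $\tfrac{1}{2}\tfrac{\d}{\d(s^2)} \|v_s\|_2^2 = -\re \langle L_\rho v_s, v_s\rangle$ combined with Gr\"onwall yields
\begin{equation*}
\|v_s\|_2 \leq \e^{(\kappa \rho + \kappa \rho^2 - \lambda) s^2}\|\e^{\rho\phi} f\|_2.
\end{equation*}
Specializing $\phi$ to a mollified truncation of $\dist(\cdot, E)$ (so $\phi \equiv 0$ on $E$ and $\phi \geq \dist(E, F)$ on $F$) and optimizing in $\rho$ over two regimes (choosing $\rho \sim \dist(E,F)/(\kappa s^2)$ when $\dist(E, F) \geq 2\kappa s^2$ and $\rho = 0$ otherwise) produces the combined decay $\e^{-c \dist(E, F)^2/s^2 - c s^2}$ asserted in (i). The companion estimate for $s\nabla_x \e^{-s^2 L^{A(\tau \cdot)}}$ follows by keeping the additional term $\tfrac{\lambda}{2}\|\nabla_x v_s\|_2^2$ in the energy identity and integrating in time over $(s^2/2, s^2)$, together with a semigroup decomposition $\e^{-s^2 L} = \e^{-(s^2/2) L} \e^{-(s^2/2) L}$.

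Statement (ii) follows by the general duality and composition scheme. The adjoint coefficients $A^*(\tau \cdot)$ satisfy the same ellipticity bounds as $A(\tau \cdot)$, so (i) applies with $L^{A^*(\tau \cdot)}$ in place of $L^{A(\tau \cdot)}$; dualizing moves the factor $s\Div_x$ to the right of the semigroup and delivers the first bound in (ii). The mixed quantity $s\nabla_x \e^{-s^2 L^{A(\tau \cdot)}} s\Div_x$ is then obtained by splitting the semigroup once more and composing the two families of off-diagonal bounds along a dyadic annular decomposition, as formalized in \cite[Sec.~4.2]{Auscher-Egert_book}. The only genuinely delicate point is preserving the $\e^{-cs^2}$ factor uniformly in $\tau$ throughout the Davies perturbation: this works precisely because the coercivity of the first step is of the full inhomogeneous norm $\|S_1 \cdot\|_2$ (and not merely of the gradient), so the mass term $\lambda\|u\|_2^2$ survives the perturbation for small $\rho$ and governs the exponential time decay.
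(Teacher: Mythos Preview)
Your proposal is correct and follows the standard route. The paper does not give its own proof of this lemma: it declares the estimates ``standard nowadays'', cites \cite[Prop.~3.2]{Bechtel_Lp} for (i), and for (ii) invokes exactly the duality--composition scheme from \cite[Sec.~4.2]{Auscher-Egert_book} that you also use. Your Davies perturbation argument is precisely the method underlying that cited reference, and you correctly isolate the key mechanism: the full inhomogeneous coercivity $\re\langle L^{A(\tau\cdot)}u,u\rangle \geq \lambda\|S_1 u\|_2^2$ supplies the mass term that survives the twist and yields the extra factor $\e^{-cs^2}$, uniformly in $\tau$.

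One small technical remark on the optimization step: the two-regime split you describe (taking $\rho=0$ when $\dist(E,F)\leq 2\kappa s^2$) does not by itself produce the Gaussian factor $\e^{-c\,\dist(E,F)^2/s^2}$ in that regime, and near the threshold the other choice is delicate because the linear term $\kappa\rho s^2$ survives. A cleaner way is to take a single $\rho=\alpha\,\dist(E,F)/s^2$ with $\alpha$ small and absorb the residual linear term via Young's inequality,
\[
\kappa\alpha\,\dist(E,F) \;\leq\; \mu\,\frac{\dist(E,F)^2}{s^2} \;+\; \frac{(\kappa\alpha)^2}{4\mu}\,s^2,
\]
choosing $\mu$ and $\alpha$ small enough that both the $\dist(E,F)^2/s^2$ and $s^2$ coefficients stay negative. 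This yields the combined decay in one stroke. The rest of your argument (gradient bound via time-integration of the retained $\tfrac{\lambda}{2}\|\nabla_x v_s\|_2^2$ and semigroup splitting; then duality and composition for (ii)) is fine.
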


        In a next step, we take the Laplace transform to derive estimates for resolvents. For technical reasons, we need an explicit $\L^2$-bound for $\leftindex^{0} R_{(1,1)}(s, \tau)$ beforehand.

	\begin{lemma} \label{M(L_t) < M(cL): Lem: L2 bound for R_(1,1)}
		For all $s > 0$, $\tau \in \R^*$ and $f \in \L^2$, we have
		\begin{equation*}
			\|\leftindex^0 R_{(1,1)} (s, \tau) f \|_2 \leq \lambda^{-1} \| f \|_2.
		\end{equation*}
	\end{lemma}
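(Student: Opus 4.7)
The plan is to obtain the bound directly from the ellipticity of $A$ via the weak formulation of the resolvent problem, exactly as for the standard energy estimate but phrased in terms of the $\tau$-adapted spaces. Write $f \in \L^2(\R^n;\C^n)$ and set $u \coloneqq (1+s^2 L_\tau)^{-1} s \Div_x f \in \W^{1,2}_\tau(\R^n)$. Then $\leftindex^0 R_{(1,1)}(s,\tau) f = s \nabla_x u$, so it suffices to show $\|s \nabla_x u\|_2 \leq \lambda^{-1} \|f\|_2$.

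The first step is to reinterpret the data $s \Div_x f$. Since only the $\nabla_x$-part of $S_\tau$ is involved, for every $v \in \W^{1,2}_\tau$ we have
\begin{equation*}
    \langle s \Div_x f, v\rangle = -(sf, \nabla_x v)_{\L^2} = -\big( [0, sf]^\top, S_\tau v\big)_{\L^2},
\end{equation*}
which exhibits $s\Div_x f$ as an element of $\W^{-1,2}_\tau$ with dual norm at most $s\|f\|_2$. Consequently, the weak formulation of $(1+s^2 L_\tau)u = s \Div_x f$ reads
\begin{equation*}
    (u,v)_{\L^2} + s^2 \int_{\R^n} A S_\tau u \cdot \overline{S_\tau v} \, \d x = -\big( [0, sf]^\top, S_\tau v\big)_{\L^2} \qquad (v \in \W^{1,2}_\tau).
\end{equation*}

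The second step is to test with $v = u$ and take real parts. By the ellipticity assumption we obtain
\begin{equation*}
    \|u\|_2^2 + \lambda s^2 \|S_\tau u\|_2^2 \leq \big| \big( [0, sf]^\top, S_\tau u\big)_{\L^2} \big| \leq s \|f\|_2 \, \|S_\tau u\|_2.
\end{equation*}
Discarding the nonnegative term $\|u\|_2^2$ and dividing by $s\|S_\tau u\|_2$ (the case $S_\tau u = 0$ being trivial) yields $\lambda s \|S_\tau u\|_2 \leq \|f\|_2$. Since $|\nabla_x u| \leq |S_\tau u|$ pointwise, the claimed bound $\|s\nabla_x u\|_2 \leq \lambda^{-1} \|f\|_2$ follows, with a constant independent of both $s$ and $\tau$.

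There is no genuine obstacle here; the only mild point is to notice that $s\Div_x f$ must be understood through the $\W^{-1,2}_\tau$-pairing (not as an $\L^2$-function), after which the standard energy computation yields the explicit constant $\lambda^{-1}$ uniformly in $\tau$, as required for the subsequent applications to off-diagonal estimates and square function bounds.
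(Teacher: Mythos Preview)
Your proof is correct and follows essentially the same approach as the paper: set $u$ to be the resolvent applied to the divergence data, interpret the right-hand side in $\W^{-1,2}_\tau$, test the weak formulation with $v=u$, use strong ellipticity to obtain $\lambda s^2\|S_\tau u\|_2^2$ on the left, and conclude via $|\nabla_x u|\leq |S_\tau u|$. The only cosmetic difference is that the paper places both factors of $s$ outside and defines $u\coloneqq (1+s^2 L_\tau)^{-1}\Div_x f$, whereas you absorb one $s$ into the data; the estimates are line-for-line the same.
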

	
	\begin{proof}
		We set $u \coloneqq  (1 + s^2 L_\tau)^{-1} \Div_x f$. By ellipticity,
		\begin{equation*}
			\lambda s^2 \| u \|_{\W^{1,2}_\tau}^2 \leq \re \langle (1 + s^2 L_\tau) u, u \rangle \leq \| \Div_x f \|_{\W^{-1,2}_\tau} \| u \|_{\W^{1,2}_\tau},
		\end{equation*}
		and therefore
		\begin{align*}
			\|\leftindex^{0} R_{(1,1)} (s,\tau)f\|_2 = s^2 \|\nabla_x u\|_2 &\leq s^2 \| u \|_{\W^{1,2}_\tau} 
			\leq \lambda^{-1} \| \Div_x f \|_{\W^{-1,2}_\tau} 
			\leq \lambda^{-1} \| f \|_2.  \qedhere
		\end{align*}
	\end{proof}

	\begin{lemma} \label{M(L_t) < M(cL): Lem: L2 ODE for R families}
		Let $\beta \in \{ 0, 1 \}^2$, $0\leq \alpha \leq  2- |\beta|$ and $\tau \in \R^*$. Then $(\leftindex^\alpha R_\beta(s, \tau))_{s>0}$ satisfies $\L^2$ off-diagonal estimates with implicit constants depending only on $\lambda$ and $\| A \|_\infty$.
	\end{lemma}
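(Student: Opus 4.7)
The plan is to combine the rescaling identity \eqref{eq: Formula L-tau rescaled} with the Laplace representation
\[(1+s^2 L_\tau)^{-1} = 2s^{-2} \int_0^\infty \sigma \e^{-\sigma^2/s^2} \e^{-\sigma^2 L_\tau} \, \d\sigma\]
to transfer the semigroup off-diagonal bounds of Lemma~\ref{M(L_t) < M(cL): Lem: L2 ODE for sg operators} to resolvent off-diagonal bounds with $\tau$-independent constants.

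\emph{Scaling step.} One first reads off from \eqref{eq: Formula L-tau rescaled} that $\e^{-s^2 L_\tau} = \delta_\tau \e^{-(s\tau)^2 L^{A(\tau^{-1}\cdot)}} \delta_{\tau^{-1}}$. Applying Lemma~\ref{M(L_t) < M(cL): Lem: L2 ODE for sg operators} at parameter $s\tau$ to the rescaled sets $\tau E$ and $\tau F$ of mutual distance $|\tau|\,\dist(E,F)$, and noting that the $|\tau|^{\pm n/2}$-factors from the dilations cancel between $\delta_\tau$ and $\delta_{\tau^{-1}}$, one obtains for $f$ supported in $E$ and $d = \dist(E,F)$,
\[\|\e^{-s^2 L_\tau} f\|_{\L^2(F)} + \|s\nabla_x \e^{-s^2 L_\tau} f\|_{\L^2(F)} \leq C \e^{-c^2 d^2/s^2 - c(s\tau)^2} \|f\|_2,\]
with $C, c$ depending only on $\lambda$ and $\|A\|_{\infty}$. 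An analogous bound with $s\Div_x$ on the right, needed for $\beta = (0,1)$ and $\beta = (1,1)$, comes from part (ii) of the same lemma.

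\emph{Laplace transfer.} Inserting these bounds into the Laplace representation and its variants with $s\nabla_x$ (resp.\ $s\Div_x$), which introduce an extra factor $s/\sigma$, the change of variables $u = \sigma/s$ together with the AM-GM estimate
\[u^2(1+c(s\tau)^2) + c^2(d/s)^2/u^2 \geq 2c(d/s)\sqrt{1+c(s\tau)^2}\]
leads, for $\beta \in \{(0,0), (1,0), (0,1)\}$, to a bound of the form
\[\|\1_F \leftindex^\alpha R_\beta(s,\tau) \1_E f\|_2 \les \frac{(s\tau)^\alpha}{(1+c(s\tau)^2)^{(2-|\beta|)/2}} \e^{-cd/s} \|f\|_2.\]
The prefactor is uniformly bounded in $s\tau$ exactly under the constraint $\alpha \leq 2 - |\beta|$, which is the arithmetic source of the hypothesis of the lemma.

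\emph{The case $\leftindex^0 R_{(1,1)}$ — the main obstacle.} The naive Laplace argument fails for $|\beta| = 2$ since the two gradients produce $(s/\sigma)^2$, rendering the integrand non-integrable near $\sigma = 0$. The plan is to treat this case by splitting according to the scale of $d/s$: for $d \les s$, the conclusion follows from the $\L^2$-bound of Lemma~\ref{M(L_t) < M(cL): Lem: L2 bound for R_(1,1)} by absorbing $\e^{-cd/s}$ into the constant; for $d \gtrsim s$, the factor $\e^{-c^2 d^2/\sigma^2}$ in the scaled version of part (ii) of Lemma~\ref{M(L_t) < M(cL): Lem: L2 ODE for sg operators} confines the Laplace integration away from $\sigma = 0$, so that the $\sigma^{-1}$-singularity is harmless and the AM-GM argument again yields $\e^{-cd/s}$. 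Throughout, the key to $\tau$-uniform constants is the auxiliary Gaussian decay $\e^{-c(s\tau)^2}$ generated by the rescaling step.
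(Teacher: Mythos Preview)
Your proposal is correct and follows essentially the same route as the paper: both arguments pass from the semigroup off-diagonal bounds of Lemma~\ref{M(L_t) < M(cL): Lem: L2 ODE for sg operators} to resolvent bounds via the Laplace transform, use an AM--GM splitting of the exponent to extract the factor $\e^{-c\,d/s}$, and treat $\beta=(1,1)$ by invoking Lemma~\ref{M(L_t) < M(cL): Lem: L2 bound for R_(1,1)} for $d/s\lesssim 1$ and absorbing the $u^{-1}$-singularity with the remaining Gaussian factor when $d/s\gtrsim 1$. The only cosmetic difference is that you rescale first and then take the Laplace transform, whereas the paper applies the Laplace formula to $L^{A(\tau^{-1}\cdot)}$ and rescales at the end; the two orderings are equivalent.
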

	
	\begin{proof}
		We take $s > 0$, $\tau \in \R^*$, measurable sets $E, F \subseteq \R^n$, and $f \in \L^2$ supported in $E$ and normalized to $\| f \|_{\L^2(E)} = 1$. In view of Lemma~\ref{M(L_t) < M(cL): Lem: L2 bound for R_(1,1)}, we can additionally assume that $\nicefrac{\d(E, F)}{s} \geq 1$ if $\beta = (1,1)$. Let us write
            \begin{equation*}
                \leftindex^\alpha{R}_\beta^{A(\tau^{-1} \cdot)}(s, 1)
		\end{equation*} 
		for the operator $\leftindex^\alpha R_\beta(s,1)$ with coefficients $A(\tau^{-1} \cdot)$ in place of $A$. By the Laplace transform formula 
        \begin{align*}
            (1+s^2L^{A(\tau^{-1} \cdot)})^{-1} f = \int_0^\infty \e^{-t} \e^{- t s^2 L^{A(\tau^{-1} \cdot)}} f \, \d t
        \end{align*}
        and Lemma~\ref{M(L_t) < M(cL): Lem: L2 ODE for sg operators}, we get 
		\begin{align*}
			\| &\leftindex^\alpha{R}_\beta^{A(\tau^{-1} \cdot)}(s, 1)f \|_{\L^2(F)} 
                \\&\leq C s^{\alpha} \int_0^{\infty} \e^{-t - c t s^2 - c^2 \frac{\d(E, F)^2}{t s^2}} \, \frac{\d t}{t^{\nicefrac{|\beta|}{2}}} 
			\\&\leq C s^\alpha \int_0^{\infty} \e^{- \frac{t}{2} - c t s^2 - \left(\frac{t}{2} + c^2 \frac{\d(E, F)^2}{2 t s^2} \right) - c^2 \frac{\d(E, F)^2}{2 t s^2}}  \, \frac{\d t}{t^{\nicefrac{|\beta|}{2}}}
                \\&\leq C \e^{- c \frac{\d(E, F)}{s}} s^\alpha \int_0^{\infty} \e^{- \frac{1}{2}(1 + 2 c s^2) t} \e^{- c^2 \frac{\d(E, F)^2}{t s^2}} t^{- \frac{|\beta|}{2}} \, \d t,
			\intertext{where we used the inequality $2 |x y| \leq x^2 + y^2$ in the final step. Now, we invoke the assumption $\nicefrac{\d(E, F)}{s} \geq 1$ when $\beta = (1, 1)$ and substitute $u = (1+ 2 cs^2)t$ in order to arrive at}
			&\leq C \e^{- c \frac{\d(E, F)}{s}} s^\alpha \int_0^{\infty} \e^{- \frac{1}{2}(1 + 2c s^2)t} \Big(\1_{[|\beta| < 2]} t^{- \frac{|\beta|}{2}} + \1_{[|\beta| =2]}\Big)  \, \d t
			\\&= C \e^{- c \frac{\d(E, F)}{s}} \int_0^{\infty} \e^{- \frac{1}{2}u} \Big(\1_{[|\beta| < 2]} s^\alpha (1+2cs^2)^{\frac{|\beta|}{2}-1} u^{-\frac{|\beta|}{2}} + \1_{[|\beta| =2]}(1+2cs^2)^{-1}\Big)  \, \d u.
		\end{align*}
            The terms in $s$ are uniformly bounded due to the restriction $0 \leq \alpha \leq 2 - |\beta|$ and the remaining integral in $u$ is finite, leading to an overall estimate
            \begin{align}\label{M(L_t) < M(cL): eq1: L2 ODE for R families}
                \|\leftindex^\alpha{R}_\beta^{A(\tau^{-1} \cdot)}(s, 1)f \|_{\L^2(F)} \leq C \e^{- c \frac{\d(E, F)}{s}}.
            \end{align}
            
		We finish the proof by scaling. By similarity as in \eqref{eq: Formula L-tau rescaled}, we have 
		\begin{equation*}
  \sgn(\tau)^{\alpha + |\beta|} \leftindex^\alpha{R}_\beta^{A(\tau^{-1} \cdot)}(s |\tau|, 1) \delta_{\tau^{-1}} = \delta_{\tau^{-1}} \leftindex^\alpha R_\beta (s, \tau). 
		\end{equation*}
		Hence, \eqref{M(L_t) < M(cL): eq1: L2 ODE for R families}, when read with $(\delta_{\tau^{-1}}f, \tau E, \tau F)$ in place of $(f, E, F)$, becomes
		\begin{equation*}
			\| \delta_{\tau^{-1}} \leftindex^\alpha R_\beta (s, \tau) f \|_{\L^2(\tau F)} \leq C \e^{- c \frac{\d(\tau E, \tau F)}{s |\tau|}} \| \delta_{\tau^{-1}} f \|_{\L^2(\tau E)}
		\end{equation*}
		and by the transformation rule we conclude the required off-diagonal bound
        \begin{equation*}
			\| \leftindex^\alpha R_\beta (s, \tau) f \|_{\L^2(F)} \leq C \e^{- c \frac{\d( E, F)}{s}} \| f \|_{\L^2(E)}. \qedhere
		\end{equation*}
	\end{proof}
	
	\subsection{\boldmath $\L^p$-theory for the auxiliary operators}
	
	We start with the following Sobolev embeddings related to the resolvent. The qualitative result is not new but once again $\tau$-independence of constants matters.
	
	\begin{lemma}  \label{M(L_t) < M(cL): Lem: Lp bound for R_(0,0)}
		If $2_* < p < q^*$, then $(\leftindex^{0}R_{(0,0)}(s, \tau))_{s>0}$ is $\L^p$-bounded with implicit constant independent of $\tau \in \R^*$.
	\end{lemma}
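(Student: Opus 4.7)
The plan is to apply the interpolation principle in Lemma~\ref{Lp-Lq estimates: Lem: Lp -> Lq + L2 ODE implies Lr -> Ls ODE} to the resolvent family $\leftindex^0 R_{(0,0)}(s,\tau) = (1+s^2 L_\tau)^{-1}$. Since this family satisfies $\L^2$ off-diagonal estimates by Lemma~\ref{M(L_t) < M(cL): Lem: L2 ODE for R families}, the missing input is an $\L^{p_1}-\L^{p_2}$-bound with constant independent of $\tau \in \R^*$ and the natural scaling from Definition~\ref{Lp-Lq estimates: Def: Lp-Lq estimates}.

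First, I translate the Hodge bound at $q$ from \eqref{Meyers meets DB: Assumptions p,q} into a Sobolev-type bound on $L_\tau^{-1}$. By Proposition~\ref{Inhom Hodge theory: Prop: Hodge-range via Hodge}, it is equivalent to $L_\tau^{-1}\colon\W_\tau^{-1,q}\to\W_\tau^{1,q}$ uniformly in $\tau$. The embeddings $\L^{q_*}\hookrightarrow\W_\tau^{-1,q}$ and $\W_\tau^{1,q}\hookrightarrow\L^{q^*}$ have $\tau$-independent constants because they factor through the classical Sobolev inequalities applied to $\nabla_x$, using $\|\nabla_x u\|_q \leq \|S_\tau u\|_q$ and its dual. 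Chaining these gives $L_\tau^{-1}\colon\L^{q_*}\to\L^{q^*}$ uniformly in $\tau$, and the identical reasoning with $r$ yields $L_\tau^{-1}\colon\L^{r_*}\to\L^{r^*}$ uniformly.

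The central step is to upgrade these bounds to the resolvent, producing $(1+s^2L_\tau)^{-1}\colon\L^{q_*}\to\L^{q^*}$ with norm $\lesssim s^{-2} = s^{n/q^*-n/q_*}$ (and analogously for $r$), uniformly in $s>0$ and $\tau\neq0$. A conceptual route is to factor the shifted operator as $L_\tau+s^{-2}=\widetilde S^*\widetilde A\widetilde S$, where $\widetilde S=[i\tau,\,\nabla_x,\,i/s]^\top$ augments $S_\tau$ by one extra component and $\widetilde A=\mathrm{diag}(A,1)$ is strongly elliptic with the same constant $\lambda$. An inhomogeneous analog of Proposition~\ref{Inhom Hodge theory: Prop: Hodge-range via Hodge} for this augmented factorization transfers the $\L^q$-Hodge bound to $\widetilde S(L_\tau+s^{-2})^{-1}\widetilde S^*$ uniformly in $s,\tau$, and the matching Sobolev embedding on the $s$-weighted augmented spaces gives $(L_\tau+s^{-2})^{-1}\colon\L^{q_*}\to\L^{q^*}$ uniformly. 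The claim for $(1+s^2L_\tau)^{-1}=s^{-2}(L_\tau+s^{-2})^{-1}$ follows by multiplying through by $s^{-2}$.

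With the $\L^{q_*}-\L^{q^*}$ and $\L^{r_*}-\L^{r^*}$ bounds in hand, Lemma~\ref{Lp-Lq estimates: Lem: Lp -> Lq + L2 ODE implies Lr -> Ls ODE}(ii) yields uniform $\L^p$-boundedness for $p\in(q_*,q^*)\cup(r_*,r^*)$. For a given $p\in(2_*,q^*)$, the assumption \eqref{Meyers meets DB: Assumptions p,q} is open-ended in $r$, so I can choose $r$ close to $2_*$: then $r_*$ is close to $(2_*)_*<2_*$ and $r^*$ close to $(2_*)^*=2>q_*$, so the union of the two intervals contains $(2_*,q^*)$. The main obstacle is the augmented-Hodge step, where the $s$-weights in the augmented Sobolev setup must be tracked carefully to produce the correct $s^{-2}$-scaling.
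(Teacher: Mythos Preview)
Your central step has a genuine gap. You claim that ``an inhomogeneous analog of Proposition~\ref{Inhom Hodge theory: Prop: Hodge-range via Hodge} transfers the $\L^q$-Hodge bound to $\widetilde S(L_\tau+s^{-2})^{-1}\widetilde S^*$ uniformly in $s,\tau$'', but Proposition~\ref{Inhom Hodge theory: Prop: Hodge-range via Hodge} is an \emph{equivalence}, not a transfer principle: its analog would tell you that $\L^q$-boundedness of the augmented Hodge projector is equivalent to an augmented Sobolev bound on $(L_\tau+s^{-2})^{-1}$, and neither side of that equivalence is available from the hypothesis. In fact, the $(i/s,-i/s)$-entry of $\widetilde S(L_\tau+s^{-2})^{-1}\widetilde S^*$ is precisely $(1+s^2L_\tau)^{-1}$, so uniform $\L^q$-boundedness of the augmented projector already contains the $p=q$ case of the lemma you are trying to prove. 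Your argument is circular at this point. Note also that the augmented operator $L_\tau+s^{-2}$ is \emph{not} of the form $L_{\tau'}$ for any $\tau'$ unless $a=1$ and $b=c=0$, so you cannot recycle the given family bound at a modified parameter.

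There is a second problem in your covering step. You assert that $q_* < (2_*)^* = 2$, but this fails once $q>2^*$ (possible in dimension $n\geq3$): then $q_*>2$, and your two intervals $(r_*,r^*)$ and $(q_*,q^*)$ need not overlap even for $r$ near~$2$. Moreover, ``choose $r$ close to $2_*$'' is not justified; open-endedness in \eqref{Meyers meets DB: Assumptions p,q} only lets you perturb $r$ slightly, not push it to the endpoint.

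The paper avoids both issues by a bootstrapping argument that does not use $r$ at all. The base case $2_*<p<2^*$ follows from Gagliardo--Nirenberg and the $\L^2$ off-diagonal bounds alone. For larger $p$ one writes $s^2(1+s^2L_\tau)^{-1} = L_\tau^{-1}\bigl(1-(1+s^2L_\tau)^{-1}\bigr)$ and combines the Sobolev embedding with the Hodge bound at the intermediate exponent $p_*\in(2,q)$ to obtain an $\L^{(p_*)_*}-\L^p$ estimate; the required $\L^{(p_*)_*}$-bound on the resolvent is the inductive hypothesis from the previous step. This climbs the Sobolev ladder one rung at a time and reaches every $p<q^*$ in finitely many steps.
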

	
	\begin{proof}
            We use a bootstrapping argument to increase the exponent $p$ step-by-step.
		
		\emph{Base case $\boldsymbol{2_* < p < 2^*}$.} We first consider exponents $p \geq 2$. By the Gagliardo--Nirenberg inequality and the $\L^2$-estimates from Lemma~\ref{M(L_t) < M(cL): Lem: L2 ODE for R families}, we find for all $s>0$ and $f \in \L^2$ that
        \begin{align*}
            \| (1 + s^2 L_\tau)^{-1} f \|_p &\les s^{\frac{n}{p} - \frac{n}{2}} \| s \nabla_x (1 + s^2 L_\tau)^{-1} f \|_2^{\frac{n}{2} - \frac{n}{p}} \| (1 + s^2 L_\tau)^{-1} f \|_2^{1 - \frac{n}{2} + \frac{n}{p}}
            \\&\les s^{\frac{n}{p} - \frac{n}{2}} \| f \|_2.
        \end{align*}
        This means that $(\leftindex^{0}R_{(0,0)}(s, \tau))_{s>0}$ is $\L^2 - \L^p$-bounded. Since the restriction on $p$ is open-ended, $\L^p$-boundedness follows from Lemma~\ref{Lp-Lq estimates: Lem: Lp -> Lq + L2 ODE implies Lr -> Ls ODE}. 
        
        So far, we have only used $\L^2$-theory for $L_\tau$. Hence, the same conclusion is valid for the adjoint $L_\tau^*$, which is an operator in the same class as $L_\tau$ and by duality $(\leftindex^{0}R_{(0,0)}(s, \tau))_{s>0}$ is $\L^{p'} - \L^2$-bounded and $\L^{p'}$-bounded. This covers all exponents in $(2_*,2)$.
		
		\emph{Inductive case.} The next case to consider is $2^* < p < ((2^{*})^* \wedge q^*)$. (We do not have to consider $p=2^*$ explicitly since all assumptions on $p$ are open-ended). This scenario can only appear in dimension $n \geq 3$ since $2^* = \infty$ when $n=2$. 
        
        Since $(p_*)_* > 2_*$, we know from the base case that $(\leftindex^{0}R_{(0,0)}(s, \tau))_{s>0}$ is $\L^{(p_*)_*}$-bounded. Given $f \in \L^{(p_*)_*} \cap \L^2$, Sobolev embeddings yields
		\begin{align*}
			\| (1 + s^2 L_{\tau})^{-1} f \|_p &\les s^{-2} \| \nabla_x L_{\tau}^{-1} (1 - (1 + s^2 L_{\tau})^{-1}) f \|_{p_*}
			\\&\leq s^{-2} \| L_{\tau}^{-1} (1 - (1 + s^2 L_{\tau})^{-1}) f \|_{\W^{1, p_*}_{\tau}}.
			\intertext{Since $2 < p_* < q$, the $\L^{p_*}$-boundedness of the Hodge projectors in its equivalent form in Proposition~\ref{Inhom Hodge theory: Prop: Hodge-range via Hodge} and another Sobolev embedding lead to}
			&\les  s^{-2} \| (1 - (1 + s^2 L_{\tau})^{-1}) f \|_{\W^{-1, p_*}_{\tau}}
			\\&\les s^{-2} \| (1 - (1 + s^2 L_{\tau})^{-1}) f \|_{(p_*)_*}.
		\end{align*} 
		This means that $(\leftindex^{0}R_{(0,0)}(s, \tau))_{s>0}$ is $\L^{(p_*)_*}-\L^p$-bounded and Lemma~\ref{Lp-Lq estimates: Lem: Lp -> Lq + L2 ODE implies Lr -> Ls ODE} yields the desired $\L^p$-boundedness as before. All implicit constants in this argument are independent of $\tau$.

        Iterating the procedure covers the full range $2^* < p < q^*$ in a finite number of steps.
	\end{proof}

        Our proof in the base case revealed an additional result that we record for later.

        \begin{corollary}\label{M(L_t) < M(cL): Cor: Lp bound for R_(0,0)}
        If $2_* < p < 2$, then $(\leftindex^{0}R_{(0,0)}(s, \tau))_{s>0}$ is $\L^p - \L^2$-bounded with implicit constant independent of $\tau \in \R^*$.
        \end{corollary}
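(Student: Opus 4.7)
The claim is in fact already contained in the opening half of the proof of Lemma~\ref{M(L_t) < M(cL): Lem: Lp bound for R_(0,0)}, so my plan is simply to isolate that half and record it cleanly, together with the duality observation that converts the $\L^2$-to-$\L^p$ bound there into the desired $\L^p$-to-$\L^2$ bound.

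More precisely, recall that in the base case of Lemma~\ref{M(L_t) < M(cL): Lem: Lp bound for R_(0,0)} the Gagliardo--Nirenberg inequality, combined with the $\tau$-uniform $\L^2$ off-diagonal estimates from Lemma~\ref{M(L_t) < M(cL): Lem: L2 ODE for R families} (applied to the families $\leftindex^{0}R_{(0,0)}$ and $\leftindex^{0}R_{(1,0)}$), produced the interpolation bound
\begin{equation*}
\|(1+s^2 L_\tau)^{-1}f\|_q \lesssim s^{\frac{n}{q}-\frac{n}{2}} \|f\|_2 \qquad (s>0,\ \tau \in \R^*,\ f \in \L^2)
\end{equation*}
for every fixed $q \in [2, 2^*)$, with implicit constant depending only on $\lambda$, $\|A\|_\infty$, $n$ and $q$, but not on $\tau$. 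This is exactly the statement that $(\leftindex^{0}R_{(0,0)}(s,\tau))_{s>0}$ is $\L^2 - \L^q$-bounded uniformly in $\tau$.

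The step I would add is duality. The adjoint coefficients $A^*$ satisfy the same ellipticity assumption with the same constants, so the argument just recalled applies verbatim to $L_\tau^*$ in place of $L_\tau$, yielding $\L^2-\L^q$-boundedness of $(1+s^2 L_\tau^*)^{-1}$ uniformly in $\tau$. Taking adjoints preserves the operator norm, and $((1+s^2 L_\tau^*)^{-1})^* = (1+s^2 L_\tau)^{-1}$ on $\L^p \cap \L^2$, so we conclude that $(\leftindex^{0}R_{(0,0)}(s,\tau))_{s>0}$ is $\L^{q'}-\L^2$-bounded with the same $\tau$-independent constant. Writing $p = q'$, the condition $q \in [2, 2^*)$ becomes $p \in (2_*, 2]$, which covers the range in the statement.

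There is no real obstacle here: both steps (Gagliardo--Nirenberg on $\R^n$ and duality) produce constants that are independent of $\tau$, so the $\tau$-uniformity carried through Lemma~\ref{M(L_t) < M(cL): Lem: L2 ODE for R families} is preserved. The corollary is stated separately only because the corresponding $\L^p - \L^2$ endpoint is the bound that will actually be used later, rather than the $\L^p$-boundedness established in Lemma~\ref{M(L_t) < M(cL): Lem: Lp bound for R_(0,0)}.
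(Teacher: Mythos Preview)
Your proposal is correct and matches the paper's own reasoning exactly: the corollary is simply the duality half of the base case in Lemma~\ref{M(L_t) < M(cL): Lem: Lp bound for R_(0,0)}, where Gagliardo--Nirenberg plus the $\tau$-uniform $\L^2$ estimates give $\L^2-\L^q$-boundedness, and applying the same to $L_\tau^*$ followed by taking adjoints yields the $\L^{q'}-\L^2$-bound. The paper records this as a separate corollary precisely because, as you note, it is the $\L^p-\L^2$ bound rather than the $\L^p$-bound that is needed downstream.
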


        We turn to the other operator families.
	
	\begin{lemma} \label{M(L_t) < M(cL): Lem: Lp bound for rest}
		Let $|\beta| \leq 1$ and $\alpha = 1$ when $\beta = (0, 1)$. Then $(\leftindex^\alpha R_\beta (s, \tau))_{s>0}$ is $\L^r$- and $\L^q$-bounded with implicit constant independent of $\tau \in \R^*$. 
	\end{lemma}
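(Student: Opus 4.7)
I prove uniform-in-$\tau$ $\L^p$-boundedness (for $p \in \{r, q\}$) of each family $\leftindex^\alpha R_\beta(s, \tau)$ listed in the statement by reducing it to a composition of two operators that are already known to be uniformly $\L^p$-bounded. The key algebraic tool is the partial-fraction identity
\begin{equation*}
s^2 (1+s^2 L_\tau)^{-1} = L_\tau^{-1}\bigl(1 - (1+s^2 L_\tau)^{-1}\bigr),
\end{equation*}
obtained from $L_\tau^{-1}(1+s^2L_\tau)^{-1} = L_\tau^{-1} - s^2(1+s^2L_\tau)^{-1}$ together with the commutativity of $L_\tau^{-1}$ and $(1+s^2L_\tau)^{-1}$. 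Pre- or post-multiplying by powers of $\tau$ and by $\nabla_x$ or $\Div_x$ yields closed-form expressions of the shape
\begin{equation*}
\leftindex^\alpha R_\beta(s, \tau) = (s\tau)^{-\gamma}\, H_{\alpha,\beta}(\tau)\,\bigl(1 - (1+s^2 L_\tau)^{-1}\bigr)
\end{equation*}
(up to the order of composition), where $H_{\alpha, \beta}(\tau)$ is a component of the Hodge projector $S_\tau L_\tau^{-1} S_\tau^*$ and $\gamma \in \{0, 1\}$. The Hodge components are uniformly $\L^p$-bounded by Proposition~\ref{DB: Prop: I(DB) = Hodge((L_tau)_tau)} combined with \eqref{Meyers meets DB: Assumptions p,q}, and $1 - (1+s^2 L_\tau)^{-1}$ is uniformly $\L^p$-bounded by Lemma~\ref{M(L_t) < M(cL): Lem: Lp bound for R_(0,0)}.

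For the three families with $\gamma = 0$, namely $\leftindex^2 R_{(0, 0)} = \tau^2 L_\tau^{-1} (1 - (1+s^2 L_\tau)^{-1})$, $\leftindex^1 R_{(1, 0)} = \tau\nabla_x L_\tau^{-1} (1 - (1+s^2 L_\tau)^{-1})$, and $\leftindex^1 R_{(0, 1)} = (1 - (1+s^2 L_\tau)^{-1})\tau L_\tau^{-1} \Div_x$, the conclusion follows immediately by composition. For the two families with $\gamma = 1$, the prefactor $(s\tau)^{-1}$ is harmless only when $s|\tau| \geq 1$; in the complementary regime $s|\tau| \leq 1$, the family $\leftindex^1 R_{(0, 0)}(s, \tau) = s\tau(1+s^2 L_\tau)^{-1}$ is still bounded by the trivial $|s\tau| \leq 1$ together with the $\L^p$-bound of the resolvent.

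The main obstacle is $\leftindex^0 R_{(1, 0)}(s, \tau) = s\nabla_x (1+s^2 L_\tau)^{-1}$ in the regime $s|\tau| \leq 1$, since no $s\tau$-prefactor can be extracted. I would address it by a rescaling argument based on \eqref{eq: Formula L-tau rescaled}: the similarity $L_\tau = \tau^2\delta_{\tau^{-1}} L^{A(\tau^{-1}\cdot)} \delta_\tau$ transforms $\leftindex^0 R_{(1, 0)}(s, \tau)$ into $\delta_\tau\,\mu\nabla_x(1 + \mu^2 L^{A(\tau^{-1}\cdot)})^{-1}\,\delta_{\tau^{-1}}$ with $\mu = s\tau \in [-1, 1]$. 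The dilations cancel on $\L^p$, and the problem reduces to uniform $\L^p$-boundedness of $\mu\nabla_x(1+\mu^2 L^{A'})^{-1}$ over $\mu \in [-1, 1]$ and the rescaled coefficient family $A' = A(\tau^{-1}\cdot)$, which preserves ellipticity and inherits the uniform $\L^p$-Hodge bound from the assumption. Combining this with the $\L^2$ off-diagonal estimates of Lemma~\ref{M(L_t) < M(cL): Lem: L2 ODE for R families} and the interpolation principle of Lemma~\ref{Lp-Lq estimates: Lem: Lp -> Lq + L2 ODE implies Lr -> Ls ODE} should close the argument on the compact $\mu$-range. I expect this last step to be the hardest, since it requires making the Hodge assumption uniform over dilations of the coefficients and carefully tracking constants in the interpolation machinery.
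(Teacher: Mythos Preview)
Your treatment of $\leftindex^\alpha R_{(0,0)}$ for $\alpha\in\{0,1,2\}$, of $\leftindex^1 R_{(1,0)}$, and of $\leftindex^1 R_{(0,1)}$ is correct and coincides with the paper's argument (same partial-fraction identity, same use of Hodge components and of Lemma~\ref{M(L_t) < M(cL): Lem: Lp bound for R_(0,0)}). Your case split $s|\tau|\ge 1$ versus $s|\tau|\le 1$ for $\leftindex^1 R_{(0,0)}$ is a harmless variant of the paper's inequality $|s\tau|\le 1+|s\tau|^2$.

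The genuine gap is in your handling of $\leftindex^0 R_{(1,0)}$ in the regime $s|\tau|\le 1$. Your rescaling is algebraically correct and does convert the question into uniform $\L^p$-boundedness of $\mu\nabla_x(1+\mu^2 L^{A'})^{-1}$ over $\mu\in[-1,1]$ and over the dilated coefficients $A'=A(\tau^{-1}\cdot)$; it is also true that the Hodge assumption transfers to $S_1(L^{A'})^{-1}S_1^*$ uniformly. But this is the \emph{same} problem you started from, merely with $\tau$ normalized to $1$: the compactness of the $\mu$-range buys you nothing because the coefficient family $\{A(\tau^{-1}\cdot):\tau\in\R^*\}$ is not compact, and Lemma~\ref{Lp-Lq estimates: Lem: Lp -> Lq + L2 ODE implies Lr -> Ls ODE} requires an $\L^p-\L^q$ bound as \emph{input}, which you have not produced. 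The Hodge bound controls $\nabla_x(L^{A'})^{-1}\Div_x$, not $\mu\nabla_x(1+\mu^2 L^{A'})^{-1}$, and there is no direct passage from one to the other without reintroducing the bad factor $\mu^{-1}$.

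The paper closes this gap by a different mechanism that works uniformly for all $s>0$ without any case split or rescaling: from
\[
s\nabla_x(1+s^2L_\tau)^{-1}=s^{-1}\nabla_x L_\tau^{-1}\bigl(1-(1+s^2L_\tau)^{-1}\bigr),
\]
one uses Proposition~\ref{Inhom Hodge theory: Prop: Hodge-range via Hodge} together with the $\tau$-uniform Sobolev embedding $\L^{p_*}\hookrightarrow \W^{-1,p}_\tau$ to obtain
\[
\|s\nabla_x(1+s^2L_\tau)^{-1}f\|_p\lesssim s^{-1}\|(1-(1+s^2L_\tau)^{-1})f\|_{p_*}\lesssim s^{-1}\|f\|_{p_*}.
\]
The point is that $s^{-1}=s^{n/p-n/p_*}$ is exactly the scaling exponent of an $\L^{p_*}-\L^p$ bound in the sense of Definition~\ref{Lp-Lq estimates: Def: Lp-Lq estimates}, so the troublesome $s^{-1}$ is absorbed rather than estimated. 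Then open-endedness in $p$ together with the $\L^2$ off-diagonal estimates and Lemma~\ref{Lp-Lq estimates: Lem: Lp -> Lq + L2 ODE implies Lr -> Ls ODE} gives $\L^\sigma$-boundedness for every $\sigma\in(p_*,p)$, and varying $p\in(2,q)$ covers the full range $(2_*,q)\supset(r,q)$. This Sobolev-embedding step is the idea you are missing.
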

	
	\begin{proof}
		Since the assumption on $r$ and $q$ in~\eqref{Meyers meets DB: Assumptions p,q} is open-ended, it will be enough to prove a respective $\L^p$-bound for $p \in (r,q)$.  For $\leftindex^{0}R_{(0,0)}$ this is Lemma~\ref{M(L_t) < M(cL): Lem: Lp bound for R_(0,0)}. The other families can be classified into three groups.
		
		\emph{The family $\boldmath{\leftindex^\alpha R_{(0,0)}}$  with $\boldsymbol{\alpha \in \{ 1, 2\}}$.} Since $|s\tau| \leq 1 +|s \tau|^2$, it suffices to treat $\leftindex^2 R_{(0,0)}$. However,
		\begin{equation*}
			(s \tau)^2 (1 + s^2 L_{\tau})^{-1} = \tau^2 L_{\tau}^{-1} (1 - (1 + s^2 L_{\tau})^{-1}),
		\end{equation*}
		and the claim follows by combining Proposition~\ref{Inhom Hodge theory: Prop: Hodge-range via Hodge} and Lemma~\ref{M(L_t) < M(cL): Lem: Lp bound for R_(0,0)}. 

        	\emph{The families $\boldmath{ \leftindex^1 R_{(1,0)}}$ and $\boldmath{\leftindex^1 R_{(0,1)}}$.} We write 
		\begin{align*}
			(s \tau) s \nabla_x (1 + s^2 L_{\tau})^{-1} 
            &= \tau \nabla_x L_{\tau}^{-1} (1 - (1 + s^2 L_{\tau})^{-1}), 
            \\
                (s \tau) (1 + s^2 L_{\tau})^{-1} s \Div_x 
            &= (1 - (1 + s^2 L_{\tau})^{-1}) \tau L_{\tau}^{-1} \Div_x
		\end{align*}
		and conclude once again by Proposition~\ref{Inhom Hodge theory: Prop: Hodge-range via Hodge} and Lemma~\ref{M(L_t) < M(cL): Lem: Lp bound for R_(0,0)}.
		
		\emph{The family $\boldmath{\leftindex^{0} R_{(1,0)}}$.} Since we have $q>2$ and $r>2_*$, it is suffices to show that for $p \in (2,q)$ both families are $\L^p$- and $\L^{p_*}$-bounded with $\tau$-independent bound. Note that in this case $p_* > 2_* \geq 1$. As before, we write
		\begin{equation*}
			s \nabla_x (1 + s^2 L_{\tau})^{-1} = s^{-1} \nabla_x L_{\tau}^{-1} (1 - (1 + s^2 L_{\tau})^{-1}).
		\end{equation*}
            Proposition~\ref{Inhom Hodge theory: Prop: Hodge-range via Hodge}, the Sobolev embedding $\L^{p_*} \sub \W^{-1, p}_{\tau}$ and Lemma~\ref{M(L_t) < M(cL): Lem: Lp bound for R_(0,0)} yield for all $f \in 
        \L^{p_*} \cap \L^2$ the bound
		\begin{align*}
			\| s \nabla_x (1 + s^2 L_{\tau})^{-1} f \|_p &\leq s^{-1} \| L_{\tau}^{-1} (1 - (1 + s^2 L_{\tau})^{-1}) f \|_{\W^{1, p}_{\tau}}
			\\&\les s^{-1} \| (1 - (1 + s^2 L_{\tau})^{-1}) f \|_{\W^{-1, p}_{\tau}}
			\\&\les s^{-1} \| (1 - (1 + s^2 L_{\tau})^{-1}) f \|_{p_*}
			\\&\les s^{-1} \| f \|_{p_*}. 
		\end{align*}
		This means that $(\leftindex^{0}R_{(1,0)}(s, \tau))_{s>0}$ is $\L^{p_*} - \L^p$-bounded with $\tau$-independent bound. Open-endedness in $p$ and Lemma~\ref{Lp-Lq estimates: Lem: Lp -> Lq + L2 ODE implies Lr -> Ls ODE} yield the claim.
	\end{proof}

        \subsection{\boldmath $\L^p$-theory for the symbol}
        
	In this section, we assemble estimates for the auxiliary functions in order to derive smoothness and $\L^p - \L^q$-type bounds for the symbol $m_{s,N}(\tau)$ from \eqref{eq: M(L) = M(L_tau): m_s(tau)}. 

	\begin{lemma} \label{M(L_t) < M(cL): Lem: derivative of resolvents}
		Let $s > 0$ and $N \in \N$. If $r<p<q$, then $\tau \mapsto (1 + s^2 L_{\tau})^{-N}$ is of class $\rC^1(\R^*; \cL(\L^p, \W^{1,p}_1))$ with derivative 
		\begin{equation*}
			\frac{\d}{\d \tau} (1 + s^2 L_{\tau})^{-N} = s^2 \sum_{k = 1}^N (1 + s^2 L_{\tau})^{- k} ( \i \Div_x c + \i b \nabla_x - 2 \tau a ) (1 + s^2 L_{\tau})^{- (N + 1 - k)}.
		\end{equation*}
            In particular, we have $m_{s,N} \in \rC^1(\R^*; \cL(\L^p))$ for every $s > 0$.
	\end{lemma}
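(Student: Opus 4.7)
The plan is to derive the candidate derivative formula from the resolvent identity, verify $\L^p \to \W^{1,p}_1$ boundedness of each summand using the auxiliary families from the previous subsection, and establish continuity of the derivative in $\tau$.

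Let $R(\tau) \coloneqq (1 + s^2 L_\tau)^{-1}$. From \eqref{eq: Formula L-tau} one reads off
\begin{equation*}
L_{\tau + h} - L_\tau = -\i h \Div_x (c \, \cdot) - \i h b \nabla_x + (2\tau h + h^2) a,
\end{equation*}
which, combined with the resolvent identity $R(\tau+h) - R(\tau) = s^2 R(\tau+h)(L_\tau - L_{\tau+h}) R(\tau)$, furnishes after dividing by $h$ and passing to the $\L^2$-limit the derivative of $R$ for $N = 1$. The telescoping identity
\begin{equation*}
R(\tau+h)^N - R(\tau)^N = \sum_{k=1}^{N} R(\tau+h)^{k-1}\bigl(R(\tau+h) - R(\tau)\bigr) R(\tau)^{N-k}
\end{equation*}
then produces the claimed formula for general $N$ after another limiting argument.

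The main task is to show that each summand $s^2 R(\tau)^k (\i \Div_x c + \i b \nabla_x - 2\tau a) R(\tau)^{N+1-k}$ is a bounded operator $\L^p \to \W^{1,p}_1$, i.e.\ that both the operator itself and its composition with $\nabla_x$ are $\L^p$-bounded. I would split the middle factor into its three pieces. The zero-order term $-2\tau a$ reduces the summand to compositions of $R(\tau)^j$ and $\nabla_x R(\tau)^j$, all $\L^p$-bounded by Lemmas~\ref{M(L_t) < M(cL): Lem: Lp bound for R_(0,0)} and \ref{M(L_t) < M(cL): Lem: Lp bound for rest} since $r<p<q$. The first-order term $\i b \nabla_x$ is treated analogously, again invoking the $\L^p$-boundedness of $\nabla_x R(\tau)^{N+1-k}$. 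The $\i \Div_x c$ term is the most delicate: I would absorb $s\Div_x$ into the leftmost resolvent by writing $s^2 R(\tau)^k \Div_x = R(\tau)^{k-1}(sR(\tau))(s\Div_x)$ and bound the composition $sR(\tau) \circ s \Div_x : \L^p \to \L^p$ (together with its gradient-augmented version) by combining Proposition~\ref{Inhom Hodge theory: Prop: Hodge-range via Hodge} with the identity $s^2 L_\tau R(\tau) = 1 - R(\tau)$ to pass from the resolvent to $L_\tau^{-1}$ modulo $\L^p$-bounded corrections.

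Continuity of the derivative in $\cL(\L^p, \W^{1,p}_1)$ then follows from norm-continuity of $\tau \mapsto R(\tau)^j$, itself a consequence of the derivative bound just established and the resolvent identity. The main obstacle I anticipate is making the $\Div_x c$-composition rigorous, since $\Div_x$ formally maps out of $\L^p$; the Hodge-projector framework is the natural setting, but care is required because the hypothesis \eqref{Meyers meets DB: Assumptions p,q} is formulated for $L_\tau^{-1}$ while we work with $(1 + s^2 L_\tau)^{-1}$. Fortunately the lemma only demands pointwise $\rC^1$-regularity in $\tau$, so $s$-dependent constants are permissible throughout.
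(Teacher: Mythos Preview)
Your proposal is correct and follows essentially the same route as the paper: the resolvent identity plus telescoping (the paper does $N=1$ first and then invokes the product rule) for the derivative formula, Lemmas~\ref{M(L_t) < M(cL): Lem: Lp bound for R_(0,0)} and~\ref{M(L_t) < M(cL): Lem: Lp bound for rest} for the $\L^p$-bounds of the building blocks, and the identity $s^2 R(\tau) = L_\tau^{-1}(1 - R(\tau))$ combined with the Hodge projector to handle the composition $s\nabla_x R(\tau)\, s\Div_x$, which is precisely the family $\leftindex^{0} R_{(1,1)}$ not covered by Lemma~\ref{M(L_t) < M(cL): Lem: Lp bound for rest}. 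One small point: your ``$\L^2$-limit'' should really be taken in $\cL(\L^p,\W^{1,p}_1)$, but since you subsequently establish the needed bounds and continuity of each factor in that topology (and since only local uniformity in $\tau \in \R^*$ is required, so missing powers of $s\tau$ are harmless), the argument goes through.
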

	
	\begin{proof}
 	We begin with the assertion about the resolvent. By the product rule and induction, it suffices to do the case $N=1$.
    
        For $\sigma \neq \tau$, we have 
    	\begin{align}
            \begin{split} \label{eq: M(L_t) < M(cL): resolvent diff}
    		&(1 + s^2 L_\tau)^{-1} - (1 + s^2 L_\sigma)^{-1} 
    		\\&= \, s^2 (1 + s^2 L_\tau)^{-1} (L_\sigma - L_\tau)(1 + s^2 L_\sigma)^{-1} 
    		\\&= \, s^2 (1 + s^2 L_\tau)^{-1} (\i (\tau - \sigma) \Div_x c + \i (\tau - \sigma) b \nabla_x - (\tau^2 - \sigma^2) a)(1 + s^2 L_\sigma)^{-1} 
    		\\&= \, (\tau - \sigma) s^2 (1 + s^2 L_\tau)^{-1} (\i \Div_x c + \i b \nabla_x - (\tau + \sigma) a)(1 + s^2 L_\sigma)^{-1} 
                \\&\eqqcolon (\tau - \sigma) r(\sigma, \tau).
            \end{split}
    	\end{align}
        Thanks to Lemma~\ref{M(L_t) < M(cL): Lem: Lp bound for rest}, the remainder $r(\sigma, \tau)$ is bounded in $\cL(\L^p)$-norm, uniformly in $\sigma$ and $\tau$ in compact subsets of $\R^*$. Hence, $\tau \mapsto (1 + s^2 L_\tau)^{-1}$ is continuous with values in $\cL(\L^p)$. 
    
        The same type of argument can be used to prove continuity with values in $\cL(\L^p)$ for $\tau \mapsto \nabla_x (1 + s^2 L_\tau)^{-1}$ and $\tau \mapsto (1 + s^2 L_\tau)^{-1} \Div_x$. In this calculation, the operator $s^2 \nabla_x (1 + s^2 L_\tau)^{-1} \Div_x$ appears, which cannot be handled via  Lemma~\ref{M(L_t) < M(cL): Lem: Lp bound for rest}. However, we can write
    	\begin{equation*}
    		s^2 \nabla_x (1 + s^2 L_\tau)^{-1} \Div_x = \nabla_x L_\tau^{-1} \Div_x - \nabla_x L_\tau^{-1} (1 + s^2 L_\tau)^{-1} \Div_x, 
    	\end{equation*}
        as a composition of operators that either fall in the scope of Lemma~\ref{M(L_t) < M(cL): Lem: Lp bound for rest} or are controlled through the Hodge projector. 
        
        Altogether, the remainder function $\tau \mapsto r(\sigma, \tau)$ in \eqref{eq: M(L_t) < M(cL): resolvent diff} is continuous with values in $\cL(\L^p, \W^{1,p}_1)$. Thus, $\sigma \mapsto (1+s^2 L_\sigma)^{-1}$ is of class $\rC^1(\R^*; \cL(\L^p, \W^{1,p}_1))$ with derivative 
        \begin{align*}
            \frac{\d}{\d \sigma} (1 + s^2 L_\sigma)^{-1} = r(\sigma, \sigma)
        \end{align*}
        as claimed.
    	
    	Continuous differentiability of $m_{s,N}$ follows immediately by the product rule since we have $m_{s,N}(\tau)_\perp = \i s \tau (1 + s^2 L_\tau)^{-N}$ and $m_{s,N}(\tau)_\parallel = s \nabla_x (1 + s^2 L_\tau)^{-N}$.
	\end{proof}

        We have reached the point in the argument, where we choose $N$ large.

        \begin{lemma}\label{M(L_t) < M(cL): Lem: Choice of N}
        There exists $N \in \N$, divisible by $4$, such that $((1 + s^2 L_{\tau})^{-\nicefrac{N}{4}})_{s > 0}$ is $\L^r - \L^q$-bounded with implicit constant independent of $\tau \in \R^*$.
        \end{lemma}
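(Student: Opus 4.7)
The plan is to apply the bootstrapping Lemma~\ref{Lp-Lq estimates: Triangle Argument} to the resolvent family $\cT = ((1+s^2 L_\tau)^{-1})_{s>0}$, viewed with $\tau$-uniform constants, to extract some $k \in \N$ for which $\cT^k = ((1+s^2 L_\tau)^{-k})_{s>0}$ is $\L^r - \L^q$-bounded. Setting $N = 4k$ will then conclude the proof.

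To invoke the lemma I need to verify, with $\tau$-uniform constants, $\L^p$-boundedness of $\cT$ for some $p \in (2_*, r)$, $\L^q$-boundedness of $\cT$, and $\L^\sigma - \L^q$-boundedness of $\cT$ for some $\sigma \in [r, q)$. I would choose $\sigma$ slightly above $\max(r, q_*)$; the elementary estimate $q_* < n$ together with the open-endedness of~\eqref{Meyers meets DB: Assumptions p,q} ensures simultaneously that $\sigma < n$ (so $\sigma^*$ is finite) and $\sigma^* > q$. The $\L^p$- and $\L^q$-boundedness of $\cT$ are then immediate from Lemma~\ref{M(L_t) < M(cL): Lem: Lp bound for R_(0,0)}.

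For the $\L^\sigma - \L^q$-bound, I would first obtain the $\L^\sigma$-boundedness of $s\nabla_x(1+s^2L_\tau)^{-1}$ by Riesz--Thorin interpolation of the $\L^r$- and $\L^q$-bounds from Lemma~\ref{M(L_t) < M(cL): Lem: Lp bound for rest} applied to $\leftindex^0 R_{(1,0)}$. Coupling this with the homogeneous Sobolev embedding $\|u\|_{\sigma^*} \les \|\nabla_x u\|_\sigma$, valid since $\sigma < n$, yields $\L^\sigma - \L^{\sigma^*}$-boundedness of $\cT$ with the natural scaling $s^{-1} = s^{n/\sigma^* - n/\sigma}$. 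A further Riesz--Thorin interpolation against the $\L^\sigma$-boundedness of $\cT$ then produces $\L^\sigma - \L^q$-boundedness with the scaling $s^{n/q - n/\sigma}$ required by Definition~\ref{Lp-Lq estimates: Def: Lp-Lq estimates}, since $q$ lies in the open interval $(\sigma, \sigma^*)$. The main delicacy is this consistent choice of a single $\sigma$; once settled, Lemma~\ref{Lp-Lq estimates: Triangle Argument} delivers $k$ with $\tau$-uniform constants, and divisibility by $4$ is arranged trivially by taking $N = 4k$.
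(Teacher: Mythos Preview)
Your proof is correct and follows essentially the same approach as the paper: both reduce to the bootstrapping Lemma~\ref{Lp-Lq estimates: Triangle Argument} applied to $((1+s^2 L_\tau)^{-1})_{s>0}$ and then set $N=4k$. The only minor difference is in how the $\L^\sigma-\L^q$ hypothesis is verified: the paper interpolates the $\L^p-\L^2$ bound of Corollary~\ref{M(L_t) < M(cL): Cor: Lp bound for R_(0,0)} (for $p\in(2_*,r)$) against an $\L^\varrho$-bound with $\varrho\in(q,q^*)$ from Lemma~\ref{M(L_t) < M(cL): Lem: Lp bound for R_(0,0)}, whereas you obtain it from the gradient bound for $\leftindex^0 R_{(1,0)}$ in Lemma~\ref{M(L_t) < M(cL): Lem: Lp bound for rest} combined with the Sobolev embedding $\L^\sigma\to\L^{\sigma^*}$; your mention of open-endedness is harmless but unnecessary, since $r<2\leq n$ and $q_*<n$ already guarantee that $\sigma$ can be chosen in $(\max(r,q_*),\min(n,q))$.
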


        \begin{proof}
            We consider the resolvent family $((1 + s^2 L_\tau)^{-1})_{s > 0}$ and the following boundedness properties with implicit constant independent of $\tau \in \R^*$. By Lemma~\ref{M(L_t) < M(cL): Lem: Lp bound for R_(0,0)}, we have $\L^\varrho$-bounds for all $\varrho \in (q, q^*)$ and by Corollary~\ref{M(L_t) < M(cL): Cor: Lp bound for R_(0,0)} we have $\L^\sigma - \L^2$-bounds for all $\sigma \in (2_*,r)$. By interpolation, we get an $\L^\sigma - \L^q$ bound for some $\sigma$. Again by Lemma~\ref{M(L_t) < M(cL): Lem: Lp bound for R_(0,0)} we also have the $\L^\sigma$- and the $\L^q$-bound. Now, Lemma~\ref{Lp-Lq estimates: Triangle Argument} yields the claim.
        \end{proof}

        The proof of the next result clarifies why this choice is appropriate for our purpose.
	
	\begin{lemma} \label{M(L_t) < M(cL): Lem: Lr-Lq bound for m_s(tau)}
		If $N$ is as in Lemma~\ref{M(L_t) < M(cL): Lem: Choice of N}, then  $(m_{s,N}(\tau))_{s > 0}$ and $(\tau m_{s,N}'(\tau))_{s > 0}$ are $\L^r - \L^q$-bounded with implicit constants independent of $\tau \in \R^*$. 
	\end{lemma}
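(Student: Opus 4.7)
The plan is to decompose both $m_{s,N}(\tau)$ and $\tau m_{s,N}'(\tau)$ into sums of compositions $T_1 \cdots T_m$ in which exactly one factor is an $(1+s^2 L_\tau)^{-N/4}$-block (supplying the $\L^r-\L^q$ scaling $s^{n/q-n/r}$ by Lemma~\ref{M(L_t) < M(cL): Lem: Choice of N}) and every remaining factor is uniformly bounded on $\L^r$ and on $\L^q$: either a further resolvent power (Lemma~\ref{M(L_t) < M(cL): Lem: Lp bound for R_(0,0)}), a member of one of the $\leftindex^\alpha R_\beta$-families with $\alpha+|\beta|\leq 2$ (Lemma~\ref{M(L_t) < M(cL): Lem: Lp bound for rest}), or multiplication by one of the coefficients $a,b,c,d\in\L^\infty$.

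For $m_{s,N}(\tau)$ this is immediate: write $m_{s,N}(\tau)_\perp = \i\,\leftindex^1 R_{(0,0)}(s,\tau)(1+s^2 L_\tau)^{-(N-1)}$ and $m_{s,N}(\tau)_\parallel = \leftindex^0 R_{(1,0)}(s,\tau)(1+s^2 L_\tau)^{-(N-1)}$, then split $(1+s^2 L_\tau)^{-(N-1)}$ into the $\L^r-\L^q$-block plus an $\L^r$-bounded remainder.

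For $\tau m_{s,N}'(\tau)$ I would invoke Lemma~\ref{M(L_t) < M(cL): Lem: derivative of resolvents} and distribute the scalar $\tau$-factors among the resolvents. Each summand has the form $\tau\cdot s^2 (1+s^2 L_\tau)^{-k}\, M\, (1+s^2 L_\tau)^{-(N+1-k)}$ with $M\in\{\i\Div_x c,\,\i b\nabla_x,\,-2\tau a\}$; in the $\parallel$-component there is an additional $s\nabla_x$ on the left. Moving $s\tau$ through neighbouring resolvents converts $(s\tau)(1+s^2 L_\tau)^{-1}$ into $\leftindex^1 R_{(0,0)}$, $(s\tau)(1+s^2 L_\tau)^{-1}s\Div_x$ into $\leftindex^1 R_{(0,1)}$, and $(s\tau)^2(1+s^2 L_\tau)^{-1}$ into $\leftindex^2 R_{(0,0)}$, while adjacent $s\nabla_x$ or $s\Div_x$ operators absorb into a bordering resolvent to produce $\leftindex^0 R_{(1,0)}$. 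Provided $N$ is large enough (which Lemma~\ref{M(L_t) < M(cL): Lem: Choice of N} ensures), at least $N/4$ free resolvent factors remain to supply the $\L^r-\L^q$-block.

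The main obstacle is the $k=1$ contribution of the $\i\Div_x c$ term to $\tau m_{s,N}'(\tau)_\parallel$, where $s\nabla_x$ and $s\Div_x$ are separated by only the single resolvent $(1+s^2 L_\tau)^{-1}$, so the naive decomposition produces the operator $\leftindex^0 R_{(1,1)}$ which is only $\L^2$-bounded (Lemma~\ref{M(L_t) < M(cL): Lem: L2 bound for R_(1,1)}). To resolve this boundary case I would rewrite the derivative using the algebraic identity
\[
    \tau\,\frac{d L_\tau}{d\tau} \;=\; 2L_\tau - S_\parallel^* A S_\tau - S_\tau^* A S_\parallel,\qquad S_\parallel \coloneqq [0,\nabla_x]^\top,
\]
so that the leading $2L_\tau$-piece telescopes via $s^2 L_\tau(1+s^2 L_\tau)^{-1} = 1 - (1+s^2 L_\tau)^{-1}$ into a pure resolvent difference, while the remaining $S_\parallel^*AS_\tau$ and $S_\tau^* A S_\parallel$ contributions always carry an explicit $s\tau$-factor that promotes any residual $\leftindex^0 R_{(0,1)}$ to the admissible $\leftindex^1 R_{(0,1)}$; the symmetric $k=N$ endpoint of the $\i b\nabla_x$ term is handled in the same way. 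After this reorganization every summand fits the composition scheme described above, and the uniform $\tau$-independence of the resulting bound follows from the uniform $\tau$-independence built into Lemmas~\ref{M(L_t) < M(cL): Lem: Lp bound for R_(0,0)}, \ref{M(L_t) < M(cL): Lem: Lp bound for rest}, and \ref{M(L_t) < M(cL): Lem: Choice of N}.
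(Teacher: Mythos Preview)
Your overall strategy matches the paper's: both decompose $m_{s,N}$ and $\tau m_{s,N}'$ into compositions of $\leftindex^\alpha R_\beta$-pieces together with one $N/4$-power block furnishing the $\L^r$--$\L^q$ step, and both identify the $k=1$ contribution of the $\i\Div_x c$ term to $\tau m_{s,N}'(\tau)_\parallel$ as the one place where $\leftindex^0 R_{(1,1)}$ shows up. The difference is in how that single term is handled, and your proposed fix does not close the gap.

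Your identity $\tau\,\tfrac{\d L_\tau}{\d\tau}=2L_\tau - S_\parallel^* A S_\tau - S_\tau^* A S_\parallel$ is correct, but the claim that the cross terms ``always carry an explicit $s\tau$-factor'' is not. Expanding gives
\[
S_\parallel^* A S_\tau + S_\tau^* A S_\parallel \;=\; -\i\tau\,\Div_x c \;-\; \i\tau\, b\nabla_x \;-\; 2\,\Div_x d\,\nabla_x,
\]
and the principal part $-2\,\Div_x d\,\nabla_x$ carries no $\tau$ at all. Plugging this into the $k=1$ summand of the $\parallel$-component produces
\[
-2\,\big(s\nabla_x(1+s^2 L_\tau)^{-1}s\Div_x\big)\, d\,\big(s\nabla_x(1+s^2 L_\tau)^{-1}\big)\,(1+s^2 L_\tau)^{-(N-1)}
\;=\;-2\,\leftindex^0 R_{(1,1)}\, d\,\leftindex^0 R_{(1,0)}\,(1+s^2 L_\tau)^{-(N-1)},
\]
so you are back to exactly the operator $\leftindex^0 R_{(1,1)}$ you were trying to avoid, with no spare $s\tau$ to promote anything. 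The telescoping of the $2L_\tau$-piece is fine, but it has merely shifted the bad term, not removed it.

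The paper resolves this differently: it keeps the original form of $T(s,\tau)$ and instead rewrites $\leftindex^0 R_{(1,1)}$ \emph{itself} through the Hodge projector,
\[
s\nabla_x(1+s^2 L_\tau)^{-1} s\Div_x
= \nabla_x L_\tau^{-1}\Div_x \;-\; s\nabla_x(1+s^2 L_\tau)^{-1}\,\tau L_\tau^{-1}\Div_x\,(s\tau)^{-1},
\]
which is legitimate because both $\nabla_x L_\tau^{-1}\Div_x$ and $\tau L_\tau^{-1}\Div_x$ are components of $S_\tau L_\tau^{-1}S_\tau^*$ and hence $\L^q$-bounded by the standing assumption~\eqref{Meyers meets DB: Assumptions p,q}. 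The stray $(s\tau)^{-1}$ then cancels against the $(s\tau)$ already present in the adjacent factor $c\,(s\tau)(1+s^2 L_\tau)^{-1}$. This is the missing idea: you need to invoke the Hodge projector bound directly at this point, not another algebraic rearrangement of $\tau L_\tau'$.
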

	
	\begin{proof}

        The bound for $m_{s,N}(\tau)$ follows by composition from Lemma~\ref{M(L_t) < M(cL): Lem: Choice of N} and the $\L^q$-bounds for $\leftindex^1 R_{(0,0)}$ and $\leftindex^{0} R_{(1,0)}$ in Lemma~\ref{M(L_t) < M(cL): Lem: Lp bound for rest}. Let us come to the estimates for $\tau m_{s, N}'(\tau)$.
		
		\emph{The scalar component.} We compute 
		\begin{equation*}
			\tau m_{s,N}'(\tau)_\perp= m_{s,N}(\tau)_\perp + \i s \tau^2 \frac{\d}{\d \tau} (1 + s^2 L_{\tau})^{-N} 
		\end{equation*}
            and still need to handle the second term on the right. To this end, we write the formula in Lemma~\ref{M(L_t) < M(cL): Lem: derivative of resolvents} in the following form:
            \begin{align}\label{M(L_t) < M(cL): eq1: Lr-Lq bound for m_s(tau)}
                \i s \tau^2 \frac{\d}{\d \tau} (1 + s^2 L_{\tau})^{-N} 
                = \sum_{k = 1}^N (1 + s^2 L_{\tau})^{-(k-1)} T(s,\tau) (1 + s^2 L_{\tau})^{- (N - k)},
            \end{align}
            where 
		\begin{align}\label{M(L_t) < M(cL): T(s,tau)-family}
            \begin{split}
			T(s,\tau) \coloneqq&\, \i s^3 \tau^2 (1 + s^2 L_{\tau})^{-1} (\i \Div_x c + \i b \nabla_x - 2 \tau a) (1 + s^2 L_{\tau})^{-1}
			\\ =& -\Big((s \tau) (1 + s^2 L_{\tau})^{-1} s \Div_x \Big)\Big(c (s \tau) (1 + s^2 L_{\tau})^{-1}\Big)
			\\ & -\Big((s \tau)^2 (1 + s^2 L_{\tau})^{-1}\Big) \Big( b s \nabla_x (1 + s^2 L_{\tau})^{-1}\Big)
			\\ &- 2 \i \Big((s \tau) (1 + s^2 L_{\tau})^{-1}\Big) \Big(a (s \tau)^2 (1 + s^2 L_{\tau})^{-1}\Big)
            \end{split}
		\end{align}
		is a composition of operators  $\leftindex^\alpha R_\beta(s,\tau)$ with $|\beta| \leq        1$ and $\alpha=1$ for $\beta = (0,1)$. Thus, Lemma~\ref{M(L_t) < M(cL): Lem: Lp bound for rest} yields that $(T(s,\tau))_{s>0}$ is $\L^r$- and $\L^q$-bounded, independently of $\tau$. The upshot is that for each summand in \eqref{M(L_t) < M(cL): eq1: Lr-Lq bound for m_s(tau)} the exponents sum up to $(k-1) + (N-k) = N-1 \geq \nicefrac{N}{2}$ and hence one exponent is at least $\nicefrac{N}{4}$. By the choice of $N$, the entire expression, as a family indexed in $s>0$, is $\L^r-\L^q$-bounded. This concludes the treatment of the scalar component.
		
		\emph{The vectorial component.} Since $\nabla_x: \W^{1,p}_1 \to (\L^p)^n$ is bounded, we obtain from Lemma~\ref{M(L_t) < M(cL): Lem: derivative of resolvents} and with $T(s,\tau)$ as in \eqref{M(L_t) < M(cL): T(s,tau)-family} that
		\begin{align*}
			\tau m_{s,N}'(\tau)_\parallel 
            &= s \tau \nabla_x \frac{\d}{\d \tau} (1 + s^2 L_{\tau})^{-N}
                        \\&= \sum_{k = 1}^N s \nabla_x (1 + s^2 L_{\tau})^{-(k-1)} (\i s \tau)^{-1} T(s,\tau) (1 + s^2 L_{\tau})^{- (N - k)}
            \\&= \sum_{k = 2}^N \Big(s \nabla_x (1 + s^2 L_{\tau})^{-1}\Big) (1 + s^2 L_{\tau})^{-(k-2)} \Big((\i s \tau)^{-1} T(s,\tau)\Big)(1 + s^2 L_{\tau})^{- (N - k)}
            \\&\quad + \Big(s \nabla_x (\i s \tau)^{-1} T(s,\tau)\Big) (1 + s^2 L_{\tau})^{- (N - 1)}.
		\end{align*}
        Let us first handle the terms in $k = 2,\dots,N$. It follows from \eqref{M(L_t) < M(cL): T(s,tau)-family} that $((\i s \tau)^{-1} T(s,\tau))_{s>0}$ is a composition of the same type as $(T(s,\tau))_{s>0}$. Hence, this family is $\L^r$- and $\L^q$-bounded, independently of $\tau$. By Lemma~\ref{M(L_t) < M(cL): Lem: Lp bound for rest}, the same is true for $(s \nabla_x (1 + s^2 L_{\tau})^{-1})_{s>0}$ appearing on the left. The sum of the exponents still satisfies $(k-2) + (N-k) = N-2 \geq \nicefrac{N}{2}$, so we obtain the required $\L^r - \L^q$-bound as before.

        For the final term, we already know that $((1 + s^2 L_{\tau})^{- (N - 1)})_{s>0}$ is $\L^r-\L^q$-bounded with implicit constants independent of $\tau \in \R^*$. Thus, it suffices to prove $\L^q$-boundedness, uniformly in $\tau$, for
	\begin{align} \label{M(L_t) < M(cL): eq2: Lr-Lq bound for m_s(tau)}
        \begin{split}
			s \nabla_x (\i s \tau)^{-1}  T(s,\tau)
			=& \, \i \Big( s \nabla_x (1 + s^2 L_{\tau})^{-1} s \Div_x\Big) \Big(c (s \tau)(1 + s^2 L_{\tau})^{-1}\Big)
			\\ &+ \i \Big((s \tau) s \nabla_x (1 + s^2 L_{\tau})^{-1} \Big) \Big(b s \nabla_x (1 + s^2 L_{\tau})^{-1}\Big)
			\\ & - 2 \Big(s \nabla_x (1 + s^2 L_{\tau})^{-1} \Big) \Big(a (s \tau)^2 (1 + s^2 L_{\tau})^{-1}\Big).
        \end{split}
	\end{align}
		The second and third term on the right are $\L^q$-bounded by Lemma~\ref{M(L_t) < M(cL): Lem: Lp bound for rest} but this lemma does not cover the family $\leftindex^{0} R_{(1,1)}$ that appears in the first term. However, writing
		\begin{align*}
			s \nabla_x (1 + s^2 L_{\tau})^{-1} s \Div_x &= \nabla_x L_{\tau}^{-1} s^2 L_{\tau} (1 + s^2 L_{\tau})^{-1} \Div_x
			\\&= \nabla_x L_{\tau}^{-1} \Div_x - s \nabla_x (1 + s^2 L_{\tau})^{-1} \tau L_{\tau}^{-1} \Div_x (s\tau)^{-1},
		\end{align*}
		we see that the first of the three terms on the right in \eqref{M(L_t) < M(cL): eq2: Lr-Lq bound for m_s(tau)} can also be decomposed into
		\begin{align*}
		 \i \Big(\nabla_x L_{\tau}^{-1} \Div_x \Big) \Big(c (s \tau) (1 + s^2 L_{\tau})^{-1}\Big)  - \i \Big(s \nabla_x (1 + s^2 L_{\tau})^{-1} \Big)\Big(\tau L_{\tau}^{-1} \Div_x \Big)\Big(c (1 + s^2 L_{\tau})^{-1}\Big).			
		\end{align*}
		Now, the $\L^q$-boundedness follows from Proposition~\ref{Inhom Hodge theory: Prop: Hodge-range via Hodge} and Lemma~\ref{M(L_t) < M(cL): Lem: Lp bound for rest}.
	\end{proof}

        The uniform $\L^r-\L^q$-bounds can be upgraded to off-diagonal estimates upon possibly taking $N$ even larger.
	
	\begin{proposition} \label{M(L_t) < M(cL): Prop: Lr-Lq ODE for m_s(tau)}
		There exists a positive integer $N$ such that the families $(m_{s,N}(\tau))_{s > 0}$ and $(\tau m_{s,N}'(\tau))_{s > 0}$ satisfy $\L^r - \L^q$ off-diagonal estimates with implicit constants independent of $\tau \in \R^*$.
	\end{proposition}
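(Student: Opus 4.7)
The plan is to combine the uniform $\L^2$ off-diagonal estimates for the auxiliary resolvent families from Lemma~\ref{M(L_t) < M(cL): Lem: L2 ODE for R families} with the uniform $\L^r-\L^q$-boundedness of $(m_{s,N}(\tau))_{s>0}$ and $(\tau m_{s,N}'(\tau))_{s>0}$ established in Lemma~\ref{M(L_t) < M(cL): Lem: Lr-Lq bound for m_s(tau)}, and then apply the interpolation principle of Lemma~\ref{Lp-Lq estimates: Lem: Lp -> Lq + L2 ODE implies Lr -> Ls ODE} to upgrade pointwise bounds to off-diagonal estimates.

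First, I would exploit the open-endedness of \eqref{Meyers meets DB: Assumptions p,q} to widen the exponents slightly: pick $r' \in (2_*, r)$ and $q' \in (q, \infty)$ for which the $\tau$-dependent Hodge family is still uniformly $\L^{r'}$- and $\L^{q'}$-bounded. With $N$ chosen large enough, divisible by $4$ and fitting the statement of Lemma~\ref{M(L_t) < M(cL): Lem: Choice of N} with $(r',q')$ in place of $(r,q)$, Lemma~\ref{M(L_t) < M(cL): Lem: Lr-Lq bound for m_s(tau)} delivers uniform $\L^{r'}-\L^{q'}$-bounds for $(m_{s,N}(\tau))_{s>0}$ and $(\tau m_{s,N}'(\tau))_{s>0}$.

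Second, I would establish $\L^2$ off-diagonal estimates for both families, uniformly in $\tau\in\R^*$. Writing
\begin{align*}
    m_{s,N}(\tau)_\perp &= \leftindex^1{R}_{(0,0)}(s,\tau) \, \big(\leftindex^0{R}_{(0,0)}(s,\tau)\big)^{N-1}, \\
    m_{s,N}(\tau)_\parallel &= \leftindex^0{R}_{(1,0)}(s,\tau) \, \big(\leftindex^0{R}_{(0,0)}(s,\tau)\big)^{N-1}
\end{align*}
expresses $m_{s,N}(\tau)$ as a composition of operators that all satisfy $\L^2$ off-diagonal estimates by Lemma~\ref{M(L_t) < M(cL): Lem: L2 ODE for R families}. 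The standard composition principle for off-diagonal decay (see \cite[Sec.~4.2]{Auscher-Egert_book}) then yields such estimates for the composition. For the derivative, I would use the identity from Lemma~\ref{M(L_t) < M(cL): Lem: derivative of resolvents} together with the decomposition of $T(s,\tau)$ in \eqref{M(L_t) < M(cL): T(s,tau)-family} and the rewriting of $\tau m_{s,N}'(\tau)_\parallel$ from the proof of Lemma~\ref{M(L_t) < M(cL): Lem: Lr-Lq bound for m_s(tau)} to express both components as a finite sum of products of $\leftindex^\alpha{R}_\beta(s,\tau)$ operators and multiplication by the bounded coefficients $a,b,c,d$. Since multiplication by $\L^\infty$-functions trivially preserves off-diagonal decay, the same compositional argument applies. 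A welcome simplification at the $\L^2$ level is that the critical case $\beta=(1,1)$ is already covered by Lemma~\ref{M(L_t) < M(cL): Lem: L2 ODE for R families}, so the Hodge-projector rewriting used in \eqref{M(L_t) < M(cL): eq2: Lr-Lq bound for m_s(tau)} is not needed here.

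Finally, applying Lemma~\ref{Lp-Lq estimates: Lem: Lp -> Lq + L2 ODE implies Lr -> Ls ODE} with $(p,r,\sigma,q)=(r',r,q,q')$, which satisfies the required chain $r' < r \leq q < q'$, combines the uniform $\L^{r'}-\L^{q'}$-boundedness from the first step with the uniform $\L^2$ off-diagonal estimates from the second step to produce the desired $\L^r-\L^q$ off-diagonal estimates with constants independent of $\tau$. The main technical point requiring care throughout is the preservation of $\tau$-uniformity in every composition; this comes for free since each auxiliary result invoked has been established with $\tau$-independent constants.
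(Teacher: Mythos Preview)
Your proposal is correct and follows essentially the same approach as the paper: reduce via open-endedness of \eqref{Meyers meets DB: Assumptions p,q}, combine the $\L^{r'}-\L^{q'}$-boundedness from Lemma~\ref{M(L_t) < M(cL): Lem: Lr-Lq bound for m_s(tau)} with $\L^2$ off-diagonal estimates obtained by decomposing the symbol into the auxiliary families $\leftindex^\alpha R_\beta$, and then apply Lemma~\ref{Lp-Lq estimates: Lem: Lp -> Lq + L2 ODE implies Lr -> Ls ODE}. The paper phrases the open-endedness step slightly differently (fixing $(r,q)$ and proving $\L^\sigma-\L^p$ off-diagonal estimates for $r<\sigma<p<q$), but this is the same manoeuvre; your observation that the $\beta=(1,1)$ term needs no Hodge rewriting at the $\L^2$ level is correct and implicit in the paper's reference to Lemma~\ref{M(L_t) < M(cL): Lem: L2 ODE for R families}.
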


        \begin{proof}
            Since the assumptions on $r$ and $q$ are open-ended, it suffices to prove $\L^\sigma-\L^p$ off-diagonal estimates whenever $r < \sigma < p < q$. 
            
            We already know that $(m_{s,N}(\tau))_{s > 0}$ and $(\tau m_{s,N}'(\tau))_{s > 0}$ are $\L^r-\L^q$-bounded. They also satisfy $\L^2$ off-diagonal estimates with implicit constants independent of $\tau$. Indeed, we have seen in the proof of Lemma~\ref{M(L_t) < M(cL): Lem: Lr-Lq bound for m_s(tau)} that $(m_{s,N}(\tau))_{s>0}$ and $(\tau m_{s,N}'(\tau))_{s>0}$ can be written as a sum and composition of the auxiliary families $(\leftindex^\alpha R_\beta(s,\tau))_{s>0}$, which satisfy $\L^2$ off-diagonal estimates by Lemma~\ref{M(L_t) < M(cL): Lem: L2 ODE for R families}. Thus, the claim is a consequence of Lemma~\ref{Lp-Lq estimates: Lem: Lp -> Lq + L2 ODE implies Lr -> Ls ODE}. 
        \end{proof}

    \subsection{Square function bounds for the symbol}
    
	Off-diagonal estimates as in Proposition~\ref{M(L_t) < M(cL): Prop: Lr-Lq ODE for m_s(tau)} imply a pointwise domination of averages through the Hardy--Littlewood maximal operator $\cM$ by splitting $\R^n$ into suitable dyadic annuli. For an explicit statement of the following corollary, we refer to \cite[Lem.~5.3]{Bechtel-Ouhabaz_ODEs}. For clarity, we write
    \begin{align*}
        (\Avg_{q,s}f)(x) \coloneqq \bigg(\fint_{B(x,s)} |f|^q \, \d y\bigg)^{\frac{1}{q}}
    \end{align*}
    for $\L^q$-averages on balls.
    
	\begin{lemma} \label{M(L_t) < M(cL): Lem: Avg dom by M}
		There is $C > 0$ such that for all $s > 0$, $\tau \in \R^*$ and $f \in \L^r \cap \L^2$ we have 
		\begin{equation*}
			\Avg_{q,s} \big(|m_{s, N}(\tau) f|^q + |\tau m_{s, N}'(\tau) f|^q\big)\leq C \cM (|f|^r)^{\frac{1}{r}}, 
		\end{equation*}
        everywhere on $\R^n$.
	\end{lemma}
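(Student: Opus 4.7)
The plan is a standard dyadic annular decomposition that converts the $\L^r-\L^q$ off-diagonal estimates of Proposition~\ref{M(L_t) < M(cL): Prop: Lr-Lq ODE for m_s(tau)} into the pointwise maximal-function bound. Let $T(s,\tau)$ stand for either $m_{s,N}(\tau)$ or $\tau m_{s,N}'(\tau)$; by the proposition, there exist constants $C,c>0$ (independent of $s$ and $\tau$) such that $\|\1_F T(s,\tau) \1_E g\|_q \leq C s^{n/q-n/r} \e^{-c \d(E,F)/s} \|\1_E g\|_r$ for all measurable $E,F$. Fix $x \in \R^n$ and decompose $f = \sum_{k \geq 0} f_k$, where $f_0 \coloneqq \1_{B(x,4s)} f$ and $f_k \coloneqq \1_{C_k} f$ for $k \geq 1$, with $C_k \coloneqq B(x,2^{k+2}s)\setminus B(x,2^{k+1}s)$.

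Using Minkowski's inequality to split the $\L^q$-average over $B(x,s)$ across the pieces $f_k$ and then the off-diagonal estimate on each piece (noting $\d(C_k, B(x,s)) \geq 2^{k+1}s - s \gtrsim 2^k s$ for $k \geq 1$), I obtain
\begin{align*}
   \bigg(\fint_{B(x,s)} |T(s,\tau) f_k|^q \, \d y\bigg)^{\frac{1}{q}}
   &\lesssim s^{-\frac{n}{q}}\, \big(s^{\frac{n}{q}-\frac{n}{r}}\e^{-c 2^k}\big) \|f\|_{\L^r(B(x,2^{k+2}s))}
   \\&= \e^{-c 2^k}\, s^{-\frac{n}{r}} |B(x,2^{k+2}s)|^{\frac{1}{r}} \bigg(\fint_{B(x,2^{k+2}s)} |f|^r \, \d y\bigg)^{\frac{1}{r}}
   \\&\lesssim 2^{\frac{kn}{r}}\e^{-c 2^k}\, \cM(|f|^r)(x)^{\frac{1}{r}},
\end{align*}
with the convention $\e^{-c 2^0} \coloneqq 1$ for the ``close'' piece $k=0$, where the $\L^r-\L^q$ boundedness is used instead. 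Summing over $k \geq 0$ yields a finite geometric-type series because $2^{kn/r}\e^{-c 2^k}$ is summable, and the bound is uniform in $x$, $s$ and $\tau$.

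The argument is essentially bookkeeping; the only real ingredient beyond Proposition~\ref{M(L_t) < M(cL): Prop: Lr-Lq ODE for m_s(tau)} is uniform control of the geometric constants in the annular decomposition, which is immediate from $|B(x,2^{k+2}s)| \sim 2^{kn}|B(x,s)|$ and the definition of $\cM$. This is exactly the content of \cite[Lem.~5.3]{Bechtel-Ouhabaz_ODEs}, so I would invoke that reference for the detailed verification rather than re-deriving it. No obstacle is anticipated; the only point requiring care is that all implicit constants be traced to be independent of $\tau \in \R^*$, which is guaranteed by the uniformity already established in Proposition~\ref{M(L_t) < M(cL): Prop: Lr-Lq ODE for m_s(tau)}.
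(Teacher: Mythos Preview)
Your proposal is correct and follows exactly the approach the paper indicates: the paper does not give a proof either but states that the bound follows from Proposition~\ref{M(L_t) < M(cL): Prop: Lr-Lq ODE for m_s(tau)} via the standard dyadic annular decomposition and refers to \cite[Lem.~5.3]{Bechtel-Ouhabaz_ODEs} for the explicit computation. Your outline of that decomposition, including the summability check and the remark on $\tau$-independence of the constants, matches this precisely.
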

	
        The domination through the maximal function implies square function estimates by a line of reasoning that goes back to Kunstmann--Weis~\cite{Kunstmann-Weis_book} and that we learned from \cite[Prop.~5.8]{Bechtel-Ouhabaz_ODEs}. We use it to provide the missing piece for the proof of Proposition~\ref{M(L_t) < M(cL): Prop: Goal}.

	\begin{proof}[\emph{Proof of Proposition~\ref{M(L_t) < M(cL): Prop: SFE for m_s(tau)}}]
		We choose $N \in \N$ as in Proposition~\ref{M(L_t) < M(cL): Prop: Lr-Lq ODE for m_s(tau)}. Given finitely many $f_j \in \L^p \cap \L^2$ and $\tau_j \in \R^*$, we first use the lower square function estimates for the family $(\Avg_{q,s})_{s>0}$ from \cite[Prop.~8.13]{Kunstmann-Weis_book} in order to write 
		\begin{align*}
			\bigg\| \Big( \sum_j |(m_{s, N}(\tau_j) + \tau_j m_{s, N}'(\tau_j)) f_j|^2  \Big)^{\frac{1}{2}} \bigg\|_p &\les \bigg\| \Big( \sum_j \big| \Avg_{q,s}\big((m_{s, N}(\tau_j) + \tau m_{s, N}'(\tau)) f_j\big)\big|^2  \Big)^{\frac{1}{2}} \bigg\|_p. 
			\intertext{Lemma~\ref{M(L_t) < M(cL): Lem: Avg dom by M} controls the right-hand side by }
			&\les \bigg\| \Big( \sum_j | \cM (|f_j|^r) |^{\frac{2}{r}}  \Big)^{\frac{1}{2}} \bigg\|_p
            \\&= \bigg\| \Big( \sum_j | \cM (|f_j|^r) |^{\frac{2}{r}}  \Big)^{\frac{r}{2}} \bigg\|_{\frac{p}{r}}^{\frac{1}{r}}
			\intertext{with exponents $\nicefrac{2}{r}, \nicefrac{p}{r} > 1$ and the classical Fefferman--Stein inequality \cite[Thm.~1]{Fefferman-Stein_Max_inequalities} yields} 
			&\les \bigg\| \Big( \sum_j | f_j |^2  \Big)^{\frac{r}{2}} \bigg\|_{\frac{p}{r}}^{\frac{1}{r}}
   \\&=\bigg\| \Big( \sum_j | f_j |^2  \Big)^{\frac{1}{2}} \bigg\|_p. &\qedhere 
		\end{align*}
	\end{proof}

    
\section{Proof of Theorem~\ref{Meyers meets DB: Main Thm: Easy formulation} in dimension $n \geq 2$ and a generalization} \label{Sec: Proof main n > 2}

According to Proposition~\ref{M(cL) < M(L_t): Prop: M(cL) < M(L_t)} and \ref{M(L_t) < M(cL): Prop: Goal}, we have $\Meyers(\cL) = \Meyers((L_\tau)_{\tau \in \R^*})$ and according to Propositions~\ref{M(L_t) = P_+(L_t): Prop: M(L_t) = P_+(L_t)} and \ref{DB: Prop: I(DB) = Hodge((L_tau)_tau)} this number coincides with $p_+(DB)$. This concludes the proof of Theorem~\ref{Meyers meets DB: Main Thm: Easy formulation}.

Our argument also gives a description of the full interval $I(DB)$ from Definition~\ref{DB: Def: Auscher-Stahlhut interval}, not just its upper endpoint. To state the general result, we write $\cL^*$ for the adjoint of $\cL$ and, with a slight abuse of notation, we call the corresponding operators in one dimension lower $L^*_\tau$ so that $L_{-\tau}^* = (L_\tau)^*$. 

\begin{definition}
   We introduce the following quantities and sets:
     \begin{align*}
        q_+(\cL) 
        &\coloneqq \sup \Big\{ p \geq 2 : (s \nabla_{t, x} (1 + s^2 \cL)^{-1})_{s > 0} \text{ is $\L^p$-bounded} \Big\},\\
        \cP(\cL) 
        &\coloneqq \Big\{ p \in (1, \infty) : \nabla_{t, x} \cL^{-1} \Div_{t, x} \text{ is $\L^p$-bounded} \Big\},\\
        \cP((L_\tau)_{\tau \in \R^*}) 
        &\coloneqq \Big\{ p \in (1, \infty) : (S_\tau L_\tau^{-1} S_\tau^*)_{\tau \in \R^*} \text{ is $\L^p$-bounded} \Big\}.
    \end{align*} 
\end{definition}

\begin{theorem}[General version of Theorem~\ref{Meyers meets DB: Main Thm: Easy formulation}] \label{Thm: characterization of I(DB)}
    In dimension $n \geq2$, we have
    \begin{align*}
        I(DB) 
        &= \cP((L_\tau)_{\tau \in \R^*})
        \\&= \cP(\cL)
        \\&= (\Meyers(\cL^*)', \Meyers(\cL))  
        \\&= (\Meyers((L_\tau^*)_{\tau \in \R^*})', \Meyers((L_\tau)_{\tau \in \R^*})) 
        \\&= (q_+(\cL^*)', q_+(\cL)).
    \end{align*}
    \end{theorem}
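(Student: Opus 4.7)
The plan is to verify that all five sets coincide with the open interval $(\Meyers(\cL^*)', \Meyers(\cL))$, combining the already-established identity $\Meyers(\cL) = p_+(DB)$ with duality of Hodge projectors and an interval-structure argument.

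First, I would collect the characterizations of the upper endpoints. Proposition~\ref{DB: Prop: I(DB) = Hodge((L_tau)_tau)} gives $I(DB) = \cP((L_\tau)_{\tau \in \R^*})$, so its upper endpoint is $p_+(DB)$. Theorem~\ref{Meyers meets DB: Main Thm: Easy formulation} together with Remark~\ref{M(L_t) < M(cL): Rem: Goal} yields $p_+(DB) = \Meyers(\cL) = \Meyers((L_\tau)_{\tau \in \R^*})$. Corollary~\ref{M(L_t) = P_+(L_t): Cor: M(cL) = P_+(cL)} identifies the supremum of $\cP(\cL) \cap [2,\infty)$ with $\Meyers(\cL)$, while Corollary~\ref{M(L_t) = P_+(L_t): Cor: M(cL) = Stuff from AE} gives $q_+(\cL) = \Meyers(\cL)$ because the choice of $N$ does not affect the supremum. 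All five upper endpoints match.

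Next, I would handle the lower endpoints by duality. The $\L^2$-adjoint of $L_\tau = S_\tau^* A S_\tau$ is $L_\tau^* = S_\tau^* A^* S_\tau$, an operator in the same class since $A^*$ is strongly elliptic. Using $\nabla_x^* = -\Div_x$ and $\Div_x^* = -\nabla_x$, one checks that $(S_\tau L_\tau^{-1} S_\tau^*)^* = S_\tau (L_\tau^*)^{-1} S_\tau^*$ and, in one dimension higher, $(\nabla_{t,x} \cL^{-1} \Div_{t,x})^* = \nabla_{t,x} (\cL^*)^{-1} \Div_{t,x}$. Consequently, $p \in \cP(\cL)$ iff $p' \in \cP(\cL^*)$, and analogously for the $\tau$-dependent families. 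Running the previous paragraph with $A^*$ in place of $A$ therefore produces a common lower endpoint equal to $\Meyers(\cL^*)' = \Meyers((L_\tau^*)_{\tau \in \R^*})' = q_+(\cL^*)'$.

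Finally, to deduce set equality from matching endpoints, I would verify that each of the five sets is an open interval around $2$. For $I(DB) = \cP((L_\tau)_{\tau \in \R^*})$ this is built into the definition of $I(DB)$. For $\cP(\cL)$, convexity follows from Riesz--Thorin interpolation of the bounded operator $\nabla_{t,x} \cL^{-1} \Div_{t,x}$ and membership of $2$ from $\L^2$-theory; openness at the upper endpoint can be obtained from Šneǐberg's stability theorem (as already used in Lemma~\ref{DB: Lem: Char of I(DB)}) applied to the isomorphism $\cL \colon \Wdot^{1,p}(\R^{1+n}) \to \Wdot^{-1,p}(\R^{1+n})$, and openness at the lower endpoint then follows by the duality of the preceding paragraph. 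The main obstacle is precisely this openness step: it must be carried out without circularity with respect to the characterizations used to identify the endpoints, but Šneǐberg's theorem supplies it cleanly, as the relevant isomorphism scales are interpolation scales in $p$. Once openness is in hand, all five sets are open intervals with the same two endpoints and must therefore coincide.
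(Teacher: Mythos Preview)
Your proposal is correct and follows essentially the same architecture as the paper's proof: identify the common upper endpoint via Proposition~\ref{DB: Prop: I(DB) = Hodge((L_tau)_tau)}, Proposition~\ref{M(L_t) = P_+(L_t): Prop: M(L_t) = P_+(L_t)}, Corollaries~\ref{M(L_t) = P_+(L_t): Cor: M(cL) = P_+(cL)}--\ref{M(L_t) = P_+(L_t): Cor: M(cL) = Stuff from AE} and Propositions~\ref{M(cL) < M(L_t): Prop: M(cL) < M(L_t)}--\ref{M(L_t) < M(cL): Prop: Goal}, then use the duality of Hodge projectors to match the lower endpoints.

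The one genuine methodological difference concerns the openness of $\cP(\cL)$. The paper does not argue this directly; it invokes \cite[Thm.~13.12]{Auscher-Egert_book}, which already gives the identity $\cP(\cL) = (q_+(\cL^*)', q_+(\cL))$ (line two equals line five) and hence the interval structure for free. You instead prove openness via \v{S}ne\u{\ii}berg's theorem applied to $\cL \colon \Wdot^{1,p}(\R^{1+n}) \to \Wdot^{-1,p}(\R^{1+n})$, using Proposition~\ref{Inhom Hodge theory: Prop: Hodge-range via Hodge} (in dimension $1+n$, case $\tau=0$) to pass between Hodge-projector bounds and the isomorphism property. This is a valid self-contained alternative: it trades one external citation for another and makes the openness step transparent, at the cost of a few extra lines.
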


    \begin{proof}
    The first equality is due to Proposition~\ref{DB: Prop: I(DB) = Hodge((L_tau)_tau)} and both sets are open intervals. The statements of Proposition~\ref{M(L_t) = P_+(L_t): Prop: M(L_t) = P_+(L_t)}, Corollary~\ref{M(L_t) = P_+(L_t): Cor: M(cL) = P_+(cL)}, Proposition~\ref{M(cL) < M(L_t): Prop: M(cL) < M(L_t)} and Proposition~\ref{M(L_t) < M(cL): Prop: Goal} is that the sets in the first four lines have the same upper endpoint. The equality of line two with line five is due to \cite[Thm.~13.12]{Auscher-Egert_book}. Since these results are also true for $\cL^*$ in place of $\cL$, the sets in lines two to five coincide. Finally, the duality relation $(S_\tau L_\tau^{-1} S_\tau^*)^* = S_\tau L_{-\tau}^* S_\tau^*$ implies
    \begin{align*}
        \cP((L_\tau^*)_{\tau \in \R^*}) 
        = \Big\{ p' \in (1, \infty) : p \in \cP((L_\tau)_{\tau \in \R^*}) \Big\}.
    \end{align*}
    Consequently, the lower endpoint of $\cP((L_\tau)_{\tau \in \R^*})$ is the H\"older conjugate of the upper 
    endpoint of $\cP((L^*_\tau)_{\tau \in \R^*})$ as claimed.
    \end{proof}

    \begin{remark}
    The identity $I(DB) = (q_+(\cL^*)', q_+(\cL))$ has a non-trivial consequence due to \cite[Thm.~1.2]{Bohnlein-Egert_Lp_estimates}. Namely, not only is this set open but there is some $\varepsilon > 0$ depending only on $\lambda$ and $\| A \|_\infty$ (but especially not on the dimension $n$) such that $(2 - \varepsilon, 2 + \varepsilon) \subseteq I(DB)$.   
    \end{remark}

    \begin{remark}
    In the block case $b=0$, $c=0$, it was previously shown in \cite[Prop.~15.1]{Auscher-Egert_book} that $p_+(DB) = q_+(L_0)$. Hence, Theorem~\ref{Thm: characterization of I(DB)} yields $q_+(L_0) = q_+(\cL)$. It is possible to prove this equality directly.
    \end{remark}

 \section{Proof of Theorem~\ref{Meyers meets DB: Main Thm: Easy formulation} in dimension $n = 1$} \label{Sec: M(L) = infinity if n = 2}

In this section, we include the proof of Theorem~\ref{Meyers meets DB: Main Thm: Easy formulation} in dimension $n=1$. In this case, it is known that $p_+(DB) = \infty$, see \cite[Prop.~3.11]{Auscher-Stahlhut_Diss}. Hence, we need to show $\Meyers(\cL) = \infty$. In fact, we obtain the slightly stronger result that the weak reverse H\"older bound with $p=\infty$ holds.

  \begin{proposition} \label{M(L) in 2d: Prop: M(L) = infty}
     Let $n = 1$. There is $C > 0$ that only depends on $\lambda$ and $\|A\|_\infty$, such that for all axes-parallel cubes $Q \subseteq \R^{1+n}$ and every $\cL$-harmonic $U$ in $2 Q$ we have
     \begin{equation*}
         \| \nabla_{t,x} U \|_{\L^\infty(Q)} \leq C \left( \fint_{2 Q} |\nabla_{t,x} U |^2 \, \d (t, x) \right)^{\frac{1}{2}}.
     \end{equation*}
 \end{proposition}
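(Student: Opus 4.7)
The plan combines three facts specific to the ambient dimension $1+n = 2$: Meyers' self-improvement (valid in every dimension), the $t$-translation invariance of $\cL$ arising from the $t$-independence of $A$, and the Sobolev embedding $\W^{1,p} \hookrightarrow \L^\infty$ available as soon as $p > 2$.

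Meyers' self-improvement provides an exponent $p > 2$ and a constant $C$, depending only on $\lambda$ and $\|A\|_\infty$, such that every $\cL$-harmonic function $V$ on $2 \tilde Q$ satisfies $(\fint_{\tilde Q}|\nabla_{t,x} V|^p)^{\frac{1}{p}} \leq C (\fint_{2 \tilde Q}|\nabla_{t,x} V|^2)^{\frac{1}{2}}$. The $t$-independence of $A$ implies, via the difference-quotient method, that $\partial_t U$ is itself $\cL$-harmonic whenever $U$ is, and inductively so is each $\partial_t^k U$. Combining the interior Caccioppoli inequality with Meyers applied to $\partial_t U$ on concentric cubes yields, for some concentric cube $Q_1$ comparable to $\tilde Q$,
\begin{equation*}
    \bigg(\fint_{Q_1} |\nabla_{t,x} \partial_t U|^p \, \d(t,x)\bigg)^{\frac{1}{p}} \leq C r^{-1} \bigg(\fint_{2 \tilde Q} |\nabla_{t,x} U|^2 \, \d(t,x) \bigg)^{\frac{1}{2}},
\end{equation*}
where $r$ is the sidelength of $\tilde Q$. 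Since $p > 2$ equals the ambient dimension, the scale-invariant Sobolev embedding $\W^{1,p}(Q_1) \hookrightarrow \L^\infty(Q_1)$ combined with Poincar\'e's inequality yields $\|\partial_t U\|_{\L^\infty(Q_2)} \leq C (\fint_{2 \tilde Q}|\nabla_{t,x} U|^2)^{\frac{1}{2}}$ on a concentric subcube $Q_2 \subset Q_1$.

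To control $\partial_x U$ I would exploit the equation itself. Writing the flux components as $F_1 = a \partial_t U + b \partial_x U$ and $F_2 = c \partial_t U + d \partial_x U$, the identity $\cL U = 0$ reads $\partial_t F_1 + \partial_x F_2 = 0$. Thanks to the $t$-independence of $A$, both $\partial_x F_2 = -\partial_t F_1 = -a \partial_t^2 U - b \partial_t \partial_x U$ and $\partial_t F_2 = c \partial_t^2 U + d \partial_t \partial_x U$ are linear combinations of second-order derivatives of $U$ carrying at least one $\partial_t$, and hence, by the previous step, belong to $\L^p(Q_1)$ with norm controlled by the $\L^2$-average of $\nabla_{t,x} U$ on $2 \tilde Q$. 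Thus $F_2 \in \W^{1,p}(Q_1)$, and another use of the Sobolev embedding gives $\|F_2\|_{\L^\infty(Q_3)} \leq C (\fint_{2 \tilde Q}|\nabla_{t,x} U|^2)^{\frac{1}{2}}$ on a subcube $Q_3 \subset Q_2$. Strong ellipticity evaluated at $(0,1)^\top \in \C^2$ yields $|d| \geq \lambda$ pointwise, so that $\partial_x U = d^{-1}(F_2 - c \partial_t U) \in \L^\infty(Q_3)$ with the same bound.

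To recover the estimate on $Q$ rather than on a shrunk subcube, I would apply the foregoing argument to the auxiliary cube $\tilde Q = Q(x_0, r/4)$ for each $x_0 \in Q$, which satisfies $2 \tilde Q \subset 2Q$ because $Q$ is concentric in $2Q$, and absorb the loss by the trivial volume comparison between the $\L^2$-averages on $2 \tilde Q$ and $2Q$. The main obstacle is the careful bookkeeping of the chain of inclusions $Q_3 \subset Q_2 \subset Q_1 \subset 2 \tilde Q$ and of the constants arising from Caccioppoli, Meyers, Poincar\'e and Sobolev, all of which are scale-invariant and depend only on $p$, $\lambda$ and $\|A\|_\infty$, which is exactly what is required for the constant in the proposition.
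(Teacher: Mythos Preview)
Your proof is correct and follows essentially the same approach as the paper: both use that $\partial_t U$ is $\cL$-harmonic, apply Meyers plus Caccioppoli to get $\nabla_{t,x}\partial_t U\in\L^p$, observe via the equation that the flux component $F_2=c\,\partial_t U+d\,\partial_x U$ has its full gradient controlled by $\nabla_{t,x}\partial_t U$, and then use the two-dimensional Sobolev embedding with $p>2$ together with $|d|\geq\lambda$ to recover $\partial_x U$. The paper packages your pair $(\partial_t U,F_2)$ as the single vector $V=\underline{A}\,\nabla_{t,x}U$ and normalizes to the unit cube instead of running your covering argument, but these are cosmetic differences.
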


\begin{proof}
For any fixed $x_0 \in \R$ and $r>0$, the transformed coefficients $A(x_0 + rx)$ are of the same class as $A$ with the same ellipticity bounds. Thus, it suffices to treat the case $Q = Q(0, 1)$. We split vectors $f \in \C^{1+1}$ as $f = [f_\perp, f_\parallel]^\top$ and, as in Section~\ref{Sec: DB}, we write
\begin{equation*}
    \underline{A} = \begin{bmatrix}
        1 & 0\\
        c & d
    \end{bmatrix}, \quad \text{so that} \quad 
    \underline{A}^{-1} = \begin{bmatrix}
        1 & 0\\
        -d^{-1}c & d^{-1}
    \end{bmatrix}.
\end{equation*}
Note that $d$ is invertible with $\|d^{-1} \|_\infty \leq \lambda^{-1}$ since $A$ is elliptic. We introduce $V \coloneqq \underline{A} \nabla_{t,x}U$ and aim for the bound
\begin{align}
\label{n=1: eq1}
    \|V\|_{\L^\infty(Q)} \leq C \|\nabla_{t,x} U\|_{\L^2(2Q)}.
\end{align}
This yields the claim because $|Q|=1$ and we have the pointwise comparability $|V| \simeq |\nabla_{t,x} U|$ with implicit constants depending only on $\lambda$ and $\|A\|_\infty$. 

The function $U$ is qualitatively smooth in $t$ since $A$ is $t$-independent (see \cite[App.~B, Lem.~1]{Auscher-Tchamitchian_book}). Within $2Q$, we compute
\begin{equation*}
    \nabla_{t,x} V_\parallel = \nabla_{t,x} (A \nabla_{t,x} U)_\parallel = \begin{bmatrix}
        \partial_t (A \nabla_{t,x} U)_\parallel \\
        \partial_x (A \nabla_{t,x} U)_\parallel
    \end{bmatrix} = \begin{bmatrix}
        (A \nabla_{t,x} \partial_t U)_\parallel \\
        - (A \nabla_{t,x} \partial_t U)_\perp
    \end{bmatrix},
\end{equation*}
where we have used the equation $\cL U = 0$ in the final step. We also have $\nabla_{t,x} V_\perp = \nabla_{t,x} \partial_t U$, which altogether leads to the pointwise control $|\nabla_{t,x} V| \les |\nabla_{t,x} \partial_t U|$. Since we are working in dimension $1+n=2$, Sobolev embeddings yield for any $p>2$ a bound
\begin{align*}
    \|V\|_{\L^\infty(Q)} 
    &\les \|V\|_{\L^p(Q)} + \|\nabla_{t,x} V\|_{\L^p(Q)} 
    \\ &\les \|\nabla_{t,x} U\|_{\L^p(Q)} + \|\nabla_{t,x} \partial_t U\|_{\L^p(Q)}.
    \intertext{We pick $p>2$ such that we have the classical Meyers estimate~\cite{Meyers_Reverse-Holder} for $\cL$-harmonic functions at our disposal. Since $\partial_t U$ is $\cL$-harmonic by $t$-independence of the coefficients, Meyers estimate applies to $U$ and $\partial_t U$, allowing us to continue by}
    &\les \|\nabla_{t,x} U\|_{\L^2(\frac{3}{2}Q)} + \|\nabla_{t,x} \partial_t U\|_{\L^2(\frac{3}{2}Q)}.
\end{align*}
Now, \eqref{n=1: eq1} follows from the Caccioppoli inequality for $\partial_t U$.
 \end{proof}

 
	\section{Systems and open problems} \label{Sec: Additional findings}

    Interested readers can check that all results in this paper remain valid for elliptic systems as long as one requires a pointwise (also known as uniformly strong) ellipticity condition. 
    
    Several references including \cite{Amenta-Auscher_Thesis, Auscher-Axelsson_weighted_max_reg, A-M_Rep-and-Uniqu-via-FO,  Auscher-Stahlhut_Diss} introduce the first-order approach under a weaker ellipticity assumption that does not imply that $B$ is invertible in $\L^\infty$. In this case, $p_+(DB)$ has a slightly more complicated definition, compare with Remark~\ref{DB: Rem: B invertible} and the proof of Proposition~\ref{DB: Prop: I(DB) = Hodge((L_tau)_tau)} fails. It remains as an open question whether Theorem~\ref{Meyers meets DB: Main Thm: Easy formulation} is still true for elliptic systems under this weaker algebraic requirement. 
    
    Another open question is whether one can prove a characterization similar to Theorem~\ref{Meyers meets DB: Main Thm: Easy formulation} if one replaces $DB$ in $n$ dimensions by more general Dirac-type operators as in \cite{A-K-McI_quadratic_estimates, Frey-McIntosh-Portal}. In this case, it is even unclear, what the corresponding equation in $(1+n)$ dimensions should be.



\begin{thebibliography}{10}
		\providecommand{\url}[1]{{\tt #1}}
		\providecommand{\urlprefix}{URL}
		\providecommand{\eprint}[2][]{\url{#2}}
		
		\bibitem{Amenta-Auscher_Thesis}
		\textsc{A.~Amenta} and \textsc{P.~Auscher}.
		\newblock Elliptic boundary Value Problems with Fractional Regularity Data, CRM Monograph Series, vol.~37, 
		\newblock AMS, Providence, 2018. 
		
		\bibitem{Auscher-Axelsson_weighted_max_reg}
		\textsc{P.~Auscher} and \textsc{A.~Axelsson}.
		{\em Weighted maximal regularity estimates and solvability of non-smooth elliptic systems. {I}} Invent. Math., \emph{184} (2011), no.~1, 47--115. 
		
		
		\bibitem{A-A-McI_L2_BVP}
		\textsc{P.~Auscher}, \textsc{A.~Axelsson} and \textsc{A.~McIntosh}.
		{\em Solvability of elliptic systems with square integrable boundary data}. Ark. Mat., \emph{48} (2010), no.~2, 253--287. 

            \bibitem{ABES-Non-Local}
            \textsc{P.~Auscher}, \textsc{S.~Bortz}, \textsc{M.~Egert}, and \textsc{O.~Saari},  
            {\em Nonlocal self-improving properties: a functional analytic approach},  
            Tunis. J. Math. \emph{1} (2019), no.~2, 151--183.  
		
		\bibitem{Auscher-Egert_book}
		\textsc{P.~Auscher} and \textsc{M.~Egert}.
		\newblock Boundary value problems and Hardy spaces for elliptic systems with block structure, Progress in Mathematics, vol.~346,
		\newblock Birkhäuser, Cham, 2023.

            \bibitem{Auscher-Egert}
		\textsc{P.~Auscher} and \textsc{M.~Egert}.
		{\em On uniqueness results for {D}irichlet problems of elliptic
              systems without de {G}iorgi--{N}ash--{M}oser regularity}. Anal. PDE, \emph{13} (2020), no.~6, 1605--1632.  
		
		
		
		\bibitem{A-M_Rep-and-Uniqu-via-FO}
		\textsc{P.~Auscher} and \textsc{M.~Mourgoglou}.
		{\em Representation and uniqueness for boundary value elliptic problems via first order systems}. Rev. Mat. Iberoam., \emph{35} (2019), no.~1, 241--315.

        	\bibitem{Auscher-Stahlhut_DB-bisectoriality}
		\textsc{P.~Auscher} and \textsc{S.~Stahlhut}.
		{\em Remarks on Functional Calculus for Perturbed First-order Dirac Operators.} Oper. Theory: Adv. Appl. \emph{240} (2014), 31--43.
		
		\bibitem{Auscher-Stahlhut_Diss}
		\textsc{P.~Auscher} and \textsc{S.~Stahlhut}.
		{\em Functional calculus for first order systems of {Dirac} type and boundary value problems.} M{\'e}m. Soc. Math. Fr., Nouv. S{\'e}r. \emph{144} (2016), 1--127, 157--164.

  \bibitem{Auscher-Tchamitchian_book}
\textsc{P.~Auscher} and \textsc{P.~Tchamitchian}.
\newblock Square root problem for divergence operators and related topics, Ast{\'e}risque, vol.~249,
\newblock Soci{\'e}t{\'e} Math{\'e}matique de France, Paris, 1998.
		
		\bibitem{A-K-McI_quadratic_estimates}
		\textsc{A.~Axelsson} , \textsc{S.~Keith} and \textsc{A.~McIntosh}.
		{\em Quadratic estimates and functional calculi of perturbed {Dirac} operators}. Invent. Math., \emph{163} (2006), no.~3, 455--497.
		
		\bibitem{Bechtel_Lp}
		\textsc{S.~Bechtel}.
		{\em $\L^p$-estimates for the square root of elliptic systems with mixed boundary conditions $\mathrm{II}$}. J. Differential Equations, \emph{379} (2024), 104--124.
		
		\bibitem{Bechtel-Ouhabaz_ODEs}
		\textsc{S.~Bechtel} and \textsc{E.~M.~Ouhabaz}.
		{\em Off-diagonal bounds for the Dirichlet--to--Neumann operator on Lipschitz domains}. J. Math. Anal. Appl., \emph{530} (2024), no.~2, 18.
		
		\bibitem{Bohnlein-Egert_Lp_estimates}
		\textsc{T.~Böhnlein} and \textsc{M.~Egert}.
		{\em Explicit improvements for $\L^p$-estimates related to elliptic systems}. Bull. Lon. Math. Soc., \emph{56} (2024), no.~3, 914--930.

            \bibitem{Dahlberg}
            \textsc{B.E.J.~Dahlberg}.
            {\em Estimates of harmonic measure}. Arch. Rational Mech. Anal., \emph{65} (1977), no.~3, 275--288.
		
		
		\bibitem{Fefferman-Stein_Max_inequalities}
		\textsc{C.~Fefferman} and \textsc{E.~M.~Stein}.
		{\em Some maximal inequalities}. Am. J. Math., \emph{93} (1971), no.~1, 107--115.

        \bibitem{Frey-McIntosh-Portal}
\textsc{D.~Frey}, \textsc{A.~McIntosh} and \textsc{P.~Portal}.
{\em Conical square function estimates and functional calculi for perturbed {Hodge}-{Dirac} operators in {{\(L^p\)}}}. J. Anal. Math. \emph{134} (2018), no.~2, 399--453.

        \bibitem{Gehring_Quasiconformal_mapping}
        \textsc{F.~W.~Gehring}.
        {\em The $\L^p$-integrability of the partial derivatives of a quasiconformal mapping}. Acta Math., \emph{130} (1973), 265--277.
		

        \bibitem{Grafakos-Modern_book}
		\textsc{L.~Grafakos}.
		\newblock Modern Fourier Analysis, ed.~3rd, Graduate Texts in Mathematics, vol.~250,
		\newblock Springer, New York, 2014.

            \bibitem{HKMP}
		\textsc{S.~Hofmann}, \textsc{C.~Kenig}, \textsc{S.~Mayboroda} and \textsc{J.~Pipher}.
		{\em Square function/non-tangential maximal function estimates and
              the {D}irichlet problem for non-symmetric elliptic operators}. J. Amer. Math. Soc., \emph{28} (2015), no.~2, 483--529.

            \bibitem{HKMP2}
		\textsc{S.~Hofmann}, \textsc{C.~Kenig}, \textsc{S.~Mayboroda} and \textsc{J.~Pipher}.
		{\em The regularity problem for second order elliptic operators
              with complex-valued bounded measurable coefficients}. Math. Ann., \emph{361} (2015), no.~3-4, 863--907.

        \bibitem{Analysis_in_BS_II}
		\textsc{T.~Hyt\"onen}, \textsc{J.~van Neerven}, \textsc{M.~Veraar} and \textsc{L.~Weis}.
		\newblock Analysis in Banach Spaces, Volume II: Probabilistic Methods and Operator Theory, Ergebnisse der Mathematik und ihrer Grenzgebiete. 3. Folge, vol.~67,
		\newblock A Series of Modern Surveys in Mathematics, Springer, Cham, 2017. 
        
		
		\bibitem{Kato_book}
		\textsc{T.~Kato}.
		\newblock Perturbation Theory for Linear Operators, Classics in Mathematics, vol.~132,
		\newblock Springer, Berlin, 1995.
		
		\bibitem{Kunstmann-Weis_book}
		\textsc{P.~C.~Kunstmann} and \textsc{L.~Weis}.
		\newblock Maximal $L_p$-regularity for parabolic equations, Fourier multiplier theorems and $H^{\infty}$-functional calculus. 
		\newblock Lecture Notes in Mathematics 1855, Springer, Berlin, 2004, 65--311. 

             \bibitem{Meyers_Reverse-Holder}
            \textsc{N.~Meyers}.
            {\em An $L^p$-estimate for the gradient of solutions of second order elliptic divergence equations}. Ann. Sc. Norm. Super. Pisa, Sci. Fis. Mat., III., \emph{17} (1963), no.~3, 189--206.

            \bibitem{Rosen}
            \textsc{A.~Ros\'{e}n}.
            {\em Layer potentials beyond singular integral operators.} Publ. Mat., \emph{57} (2013), no.~2, 429--454.
		
		\bibitem{Shen_Lp-extrapolation}
		\textsc{Z.~Shen}.
		{\em Bounds of Riesz Transform on $L^p$ Spaces for Second Order Elliptic Operators}. Ann. Inst. Fourier., \emph{55} (2005), no.~1, 173--197.	
		
		\bibitem{Sneiberg_Extrapolation}
		\textsc{I.~\u{S}ne\u{\ii}berg}.
		{\em Spectral properties of linear operators in interpolation families of Banach spaces}. Mat. Issled., \emph{9} (1974), no.~2, 214--229, 254--255.	

        \bibitem{Tolksdorf_R-sectoriality}
\textsc{P.~Tolksdorf}. {\em $\mathcal{R}$-sectoriality of higher-order elliptic systems on general bounded domains}.
J.~Evol.~Eq. \emph(2018), no.~2, 323--349.
		
		\bibitem{Weis_Lp-multipliers}
		\textsc{L.~Weis}.
		{\em Operator-valued Fourier multiplier theorems and maximal $L_p$-regularity}. Math. Ann. \emph{319} (2001), no.~4, 735--758.
		
	\end{thebibliography}
\end{document}